\newcommand{\prob}{\mathbb{P}}
\newcommand{\Prob}[1]{\prob\left(#1\right)}
\newcommand{\Probn}[1]{\prob_n\left(#1\right)}
\newcommand{\expec}{\mathbb{E}}
\newcommand{\Exp}[1]{\expec\left[#1\right]}
\newcommand{\Var}[1]{\textup{Var}\left(#1\right)}
\newcommand{\plim}{\ensuremath{\stackrel{\sss{\prob}}{\longrightarrow}}}
\newcommand{\ind}[1]{\mathbbm{1}{\left\{#1\right\}}}
\newcommand{\bigO}[1]{O\left(#1\right)}
\newcommand{\bigOp}[1]{O_{\sss\prob}\left(#1\right)}
\newcommand{\sss}[1]{\scriptscriptstyle{#1}}
\newcommand\abs[1]{\left|#1\right|}
\newcommand{\me}{\textup{e}}
\newcommand{\dd}{{\rm d}}
\newcommand{\op}{o_{\sss\prob}}
\newcommand{\URGnd}{{\rm URG}^{\sss{(n)}}(\boldsymbol{d})}
\newcommand{\Mn}{M^{\sss(n)}}
\newcommand{\Ecal}{{\mathcal{E}}}
\numberwithin{equation}{section}
\newcommand{\cs}[1]{{\color{black}#1}}
\newtheorem{theorem}{Theorem}[section]
\newtheorem{definition}{Definition}[section]
\newtheorem{lemma}[theorem]{Lemma}
\newtheorem{assumption}[definition]{Assumption}
\numberwithin{equation}{section}
\begin{document}
	\title{Distinguishing power-law uniform random graphs from inhomogeneous random graphs through small subgraphs}
\author{Clara Stegehuis}
\affil{Department of Electrical Engineering, Mathematics and Computer Science, University of Twente}
\maketitle
		
		\begin{abstract}
			We investigate the asymptotic number of induced subgraphs in power-law uniform random graphs. We show that these induced subgraphs appear typically on vertices with specific degrees, which are found by solving an optimization problem.
			Furthermore, we show that this optimization problem allows to design a linear-time, randomized algorithm that distinguishes between uniform random graphs and random graph models that create graphs with approximately a desired degree sequence: the power-law rank-1 inhomogeneous random graph. This algorithm uses the fact that some specific induced subgraphs appear significantly more often in uniform random graphs than in rank-1 inhomogeneous random graphs.
				\end{abstract}

\section{Introduction}
Many networks were found to have a degree distribution that is well approximated by a power-law distribution with exponent $\tau\in(2,3)$. These power-law real-world networks are often modeled by random graphs: randomized mathematical models that create networks. One of the most natural random graph models to consider is the uniform random graph~\cite{molloy1995,wormald1981}. Given a degree sequence, the uniform random graph samples a graph uniformly at random from all possible graphs with exactly that degree sequence. 


The most common way to analyze uniform random graphs, is to analyze the configuration model, another random graph model that is easier to analyze, instead~\cite{bollobas1980}. The configuration model creates random multigraphs with a specified degree sequence, i.e., graphs where multiple edges and self-loops can be present. When conditioning on the event that the configuration model results in a simple graph, it is distributed as a uniform random graph. If the probability of the event that the configuration model results in a simple graph is sufficiently large, it is possible to translate results from the configuration model to the uniform random graph. In the case of power-law degrees with exponent $\tau\in(2,3)$ however, the probability of the configuration model resulting in a simple random graph vanishes, so that the configuration model cannot be used as a method to analyze power-law uniform random graphs~\cite{janson2009c}. In this setting, uniform random graphs need to be analyzed directly instead. However, analyzing uniform random graphs is in general complex: the presence of edges are \emph{dependent}, and there is no simple algorithm for constructing a uniform random graph with power-law degrees.

Several other random graph models that are easy to generate, create graphs with \emph{approximately} the desired degree sequence. The most prominent such models are rank-1 inhomogeneous random graphs~\cite{britton2006,boguna2003,chung2002}. In these models, every vertex is equipped with a weight, and pairs of vertices are connected independently with a probability that is a function of the vertex weights. Another such model is the erased configuration model, which erases all multiple edges and self-loops in the configuration model~\cite{britton2006}. As these models are easy to generate and easy to analyze, they are often analyzed as a proxy for random graphs with a desired degree sequence.

In this paper, we investigate induced subgraphs in uniform random graphs. Several special cases of subgraph counts in uniform random graphs have been analyzed before, such as cycles~\cite{wormald1981,garmo1999}. However, existing results often need a bound on the maximal degree in the graph or the assumption that all degrees are equal, which does not allow for analyzing power-law random graphs with $\tau\in(2,3)$. Recently, triangles in uniform power-law random graphs have also been analyzed~\cite{gao2018}. In this paper, we investigate the subgraph count of \emph{all} possible induced subgraphs by using a recent method based on optimization models~\cite{hofstad2017d,garavaglia2018} which enabled to analyze subgraph counts in erased configuration models and preferential attachment models. We combine this method with novel estimates on the connection probabilities in uniform random graphs~\cite{gao2020} to obtain an optimization model that finds the most likely composition of an induced subgraph of a power-law uniform random graph. This method allows us to localize and enumerate all possible induced subgraphs.

We then use this optimization problem to design a randomized algorithm that distinguishes two types of rank-1 inhomogeneous random graphs from uniform random graphs in linear time. Interestingly, this shows that approximate-degree power-law random graphs are fundamentally different in structure from power-law uniform random graphs. Indeed, there are subgraphs that appear significantly more often in uniform random graphs than in these rank-1 inhomogeneous random graphs. Furthermore, the optimization problem that we use to prove results on the number of subgraphs allows to detect these differences in linear time, while subgraph counting in general cannot be done in linear time.

We first introduce the uniform random graph and the induced subgraph counts in Section~\ref{sec:urg}. Then, we present our main results on subgraph counts in the large network limit in Section~\ref{sec:main}. After that, we discuss the implications of these results for distinguishing uniform random graphs from inhomogeneous random graphs in Section~\ref{sec:algthm}. We then provide the proofs of our main results in Sections~\ref{sec:maxcont}-\ref{sec:algproof}.

\paragraph{Notation.}\label{sec:notation}
We denote $[k]=\{1,2,\dots,k\}$. We say that a sequence of events $(\mathcal{E}_n)_{n\geq 1}$ happens with high probability (w.h.p.) if $\lim_{n\to\infty}\Prob{\mathcal{E}_n}=1$ and we use $\plim $ for convergence in probability. We write $f(n)=o(g(n))$ if $\lim_{n\to\infty}f(n)/g(n)=0$, and $f(n)=O(g(n))$ if $|f(n)|/g(n)$ is uniformly bounded. We write $f(n)=\Theta(g(n))$ if $f(n)=O(g(n) )$ as well as $g(n)=O(f(n))$. We say that $X_n=O_{\sss{\prob}}(g(n))$ for a sequence of random variables $(X_n)_{n\geq 1}$ if $|X_n|/g(n)$ is a tight sequence of random variables, and $X_n=o_{\sss{\prob}}(g(n))$ if $X_n/g(n)\plim 0$.

	\paragraph{Uniform random graphs.}\label{sec:urg} Given a positive integer $n$ and a graphical \emph{degree sequence}: a sequence of $n$ positive integers $\boldsymbol{d}=(d_1,d_2,\ldots, d_n)$, the \emph{uniform random graph} ($\URGnd$) is a simple graph, uniformly sampled from the set of all simple graphs with degree sequence $(d_i)_{i\in[n]}$. Let 
	$d_{\max}=\max_{i\in[n]}d_i$ and $L_n=\sum_{i=1}^n d_i$. We denote the empirical degree distribution by
\begin{equation}
	F_n(j)=\frac{1}{n}\sum_{i\in[n]}\ind{d_i\leq j}.
\end{equation}

We study the setting where the variance of $\boldsymbol{d}$ diverges when $n$ grows large.	
In particular, we assume that the degree sequence satisfies the following assumption:
\begin{assumption}[Degree sequence]\label{ass:degrees}
	\leavevmode
	\begin{enumerate}
		\item \label{ass:degreeall}
		There exist $\tau\in(2,3)$ and constants $K_1,K_2>0$ such that for every $n\ge 1$ and every $0\le j\le d_{\max}$, 
		\begin{equation}\label{eq:bound}
			K_1 j^{1-\tau}\leq 1-F_n(j)\leq K_2 j^{1-\tau}.
		\end{equation}
		\item \label{ass:degreerange}
		There exist $\tau\in(2,3)$ and a constant $C>0$ such that, for all $j=O(\sqrt{n})$,
		\begin{equation}\label{D-tail}
			1-F_n(j) = Cj^{1-\tau}(1+o(1)).
		\end{equation}
	\end{enumerate}
\end{assumption}
It follows from~(\ref{eq:bound}) that
\begin{equation}\label{dmax}
	d_{\max} < M n^{1/(\tau-1)},\quad \mbox{for some sufficiently large constant $M>0$.}
\end{equation}
Furthermore, Assumptions~\ref{ass:degreeall} and~\ref{ass:degreerange} together show that
\begin{equation}\label{eq:mu}
	\lim_{n\to\infty}\frac{1}{n}\sum_{i=1}^nd_i=\lim_{n\to\infty}\frac{L_n}{n}=\mu<\infty,
\end{equation}
for some $\mu>0$.


\section{Main results}\label{sec:main}

We now present our main results. Let $H=(V_H,\Ecal_H)$ be a small, connected graph. We are interested in the induced subgraph count of $H$, the number of subgraphs of $\URGnd$ that are isomorphic to $H$.
Let $\URGnd|_{\boldsymbol{v}}$ denote the induced subgraph obtained by restricting $\URGnd$ to vertices $\boldsymbol{v}$. We can write the probability that an induced subgraph $H$ with $|V_H|=k$ is created on $k$ uniformly chosen vertices $\boldsymbol{v}=(v_1, \ldots, v_k)$ in $\URGnd$ as
\begin{equation}\label{eq:pHpresent}
\Prob{\URGnd|_{\boldsymbol{v}}= H}=\sum_{\boldsymbol{d}'}\Prob{\URGnd|_{\boldsymbol{v}}=H \mid d_{\boldsymbol{v}}=\boldsymbol{d}'}\Prob{d_{\boldsymbol{v}}=\boldsymbol{d}'},
\end{equation}
where the sum is over all possible degrees on $k$ vertices $\boldsymbol{d}'=(d_i')_{i\in [k]}$, and $d_{\boldsymbol{v}}=(d_{v_i})_{i\in[k]}$ denotes the degrees of the randomly chosen set of $k$ vertices. Recently, it has been shown that in erased configuration models, 
there is a specific range of $d_1',\ldots,d_k'$ that gives the maximal contribution to the amount of subgraphs of those degrees, sufficiently large to ignore all other degree ranges~\cite{hofstad2017d}. In this paper, we show that also~\eqref{eq:pHpresent} is optimized for specific ranges $d_1',\ldots,d_k'$ that depend on the subgraph $H$. 

Furthermore, we show that \cs{when~\eqref{eq:pHpresent} is maximized by a unique range of degrees}, there are only four possible ranges of degrees that maximize the term inside the sum in~\eqref{eq:pHpresent}. These ranges are constant degrees, or degrees proportional to $n^{(\tau-2)/(\tau-1)}$, to $\sqrt{n}$ or to $n^{1/(\tau-1)}$. Interestingly, these are the same ranges that contribute to the erased configuration model~\cite{hofstad2017d}. However, the optimal distribution of the subgraph vertices over these ranges may be different in the erased configuration model and the uniform random graph.

\subsection{Optimizing the subgraph degrees}
We now present the optimization problems that maximizes the summand in~\eqref{eq:pHpresent} for induced subgraphs. 
Let $H=(V_H,\Ecal_H)$ be a small, connected graph on $k\geq 3$ vertices. Denote the set of vertices of $H$ that have degree one inside $H$ by $V_1$. 
Let $\mathcal{P}$ be all partitions of $V_H\setminus V_1$ into three disjoint sets $S_1,S_2,S_3$.
 This partition into $S_1,S_2$ and $S_3$ corresponds to the optimal orders of magnitude of the degrees in~\eqref{eq:pHpresent}: $S_1$ is the set of vertices with degree proportional to $n^{(\tau-2)/(\tau-1)}$, $S_2$ the set with degrees proportional to $n^{1/(\tau-1)}$, and $S_3$ the set of vertices with degrees proportional to $\sqrt{n}$. We then derive an optimization problem that finds the partition of the vertices into these three orders of magnitude that maximizes the contribution to the number of induced subgraphs. When a vertex in $H$ has degree 1, its degree in $\URGnd$ is typically small, i.e., it does not grow {with} $n$.

Given a partition $\mathcal{P}=(S_1,S_2,S_3)$ of $V_H\setminus V_1$, let $\Ecal_{S_i}$ denote the set of edges in $H$ between vertices in $S_i$ and $E_{S_i}=|\Ecal_{S_i}|$ its size, $\Ecal_{S_i,S_j}$ the set of edges between vertices in $S_i$ and $S_j$ and $E_{S_i,S_j}=|\Ecal_{S_i,S_j}|$ its size, and finally $\Ecal_{S_i,V_1}$ the set of edges between vertices in $V_1$ and $S_i$ and $E_{S_i,V_1}=|\Ecal_{S_i,V_1}|$ its size. We now define the optimization problem that optimizes the summand in~\eqref{eq:pHpresent} as
	\begin{align}
	\label{eq:maxeqsub}
B(H) & =\max_{\mathcal{P}}\Big[\abs{S_1}+\frac{1}{\tau-1}\abs{S_2}(2-\tau-k+|S_1|+k_1)\nonumber\\
& \quad -\frac{2E_{S_1}-2E_{S_2}+E_{S_1,S_3}-E_{S_2,S_3}+E_{S_1,V_1}-E_{S_2,V_1}}{\tau-1}\Big].
	\end{align}

 Let $S_1^* ,S_2^* ,S_3^* $ be a maximizer of~\eqref{eq:maxeqsub}. Furthermore, for any $(\alpha_1,\ldots, \alpha_k)$ such that $\alpha_i\in[0,1/(\tau-1)]$, define
 	\begin{equation}\label{eq:Mnalph}
 	M_n^{(\boldsymbol{\alpha})}(\varepsilon)=\{ ({v_1,\ldots, v_k})\colon  d_{{v_i}}\in[\varepsilon,1/\varepsilon] (\mu n) ^{\alpha_i}\ \forall i\in[k] \}.
 	\end{equation}
These are the sets of vertices $(v_1,\ldots, v_k)$ such that ${d_{v_1}}$ is proportional to $n^{\alpha_1}$ and ${d_{v_2}}$ proportional to $n^{\alpha_2}$ and so on. 
Denote the number of subgraphs with vertices in $M_n^{(\boldsymbol{\alpha})}(\varepsilon)$ by $N (H,M_n^{(\boldsymbol{\alpha})}(\varepsilon))$. Define the vector $\boldsymbol{\alpha}$ as
 \begin{equation}\label{eq:alphasub}
	 {\alpha}_i =\begin{cases}
		 (\tau-2)/(\tau-1)& i\in S _1^*,\\
		 1/(\tau-1)& i\in S _2^*,\\
		 \tfrac{1}{2} & i\in S _3^*,\\
		 0 & i\in V_1.
	 \end{cases}
 \end{equation}

The next theorem shows that sets of vertices in $M_n^{\boldsymbol{\alpha} }(\varepsilon)$ contain a large number of subgraphs, and computes the scaling of the number of induced subgraphs:

\begin{theorem}[General induced subgraphs]\label{thm:motifs}
	Let $H$ be a subgraph on $k$ vertices such that the solution to~\eqref{eq:maxeqsub} is unique.
	\begin{enumerate}[(i)]
		\item 
For any $\varepsilon_n$ such that $\lim_{n\to\infty}\varepsilon_n=0$,
	\begin{equation}
	\frac{N \big(H,M_n^{(\boldsymbol{\alpha} )}\left(\varepsilon_n\right)\big) }{N (H)}\plim 1.
	\end{equation}
	\item Furthermore, for any fixed $0<\varepsilon<1$,\cs{
	\begin{equation}\label{eq:Nsubmag}
	\frac{N (H,M_n^{(\boldsymbol{\alpha} )}(\varepsilon))}{n^{\frac{3-\tau}{2}(k_{2+}+B (H))+k_1/2}} \leq  f(\varepsilon)+\op(1),
	\end{equation}
and
	\begin{equation}\label{eq:Nsubmaglow}
	\frac{N (H,M_n^{(\boldsymbol{\alpha} )}(\varepsilon))}{n^{\frac{3-\tau}{2}(k_{2+}+B (H))+k_1/2}} \geq  \tilde{f}(\varepsilon)+\op(1),
	\end{equation}
for some functions $f(\varepsilon),\tilde{f}(\varepsilon)<\infty $ not depending on $n$.} Here $k_{2+}$ denotes the number of vertices in $H$ of degree at least 2, and $k_1$ the number of degree-one vertices in $H$.
\end{enumerate}
\end{theorem}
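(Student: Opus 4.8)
\emph{Approach and first moment.} The plan is to combine a first-moment estimate, obtained from the joint connection-probability asymptotics of~\cite{gao2020}, with a truncated second-moment argument for concentration, coupled with an analysis of the continuous relaxation of~\eqref{eq:maxeqsub}. Write $N(H,M_n^{(\boldsymbol{\alpha})}(\varepsilon))$ as $|\mathrm{Aut}(H)|^{-1}$ times a sum over ordered $k$-tuples $\boldsymbol{v}$ of distinct vertices of the indicator that $\URGnd|_{\boldsymbol{v}}$ realizes $H$ under a fixed labelling and that $\boldsymbol{v}\in M_n^{(\boldsymbol{\alpha})}(\varepsilon)$. Conditioning on $d_{\boldsymbol{v}}$ and applying the estimate of~\cite{gao2020} for the probability of a prescribed edge/non-edge pattern on a bounded vertex set in $\URGnd$, each edge of $H$ contributes, up to $1+o(1)$, a factor of order $\min(1,d_{v_i}d_{v_j}/L_n)$, each non-edge a complementary factor, plus uniform-random-graph correction factors that become of polynomial order precisely when some $d_{v_i}\gtrsim\sqrt n$. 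Summing over $\boldsymbol{v}$ with $d_{v_i}\in[\varepsilon,1/\varepsilon](\mu n)^{\alpha_i}$, using Assumption~\ref{ass:degrees} (the number of such vertices is $\Theta(n^{1-\alpha_i(\tau-1)})$ with a constant that is an explicit, $n$-independent function of $\varepsilon$), inserting the $\boldsymbol{\alpha}$ of~\eqref{eq:alphasub}, and collecting powers of $n$, yields
\[
c_1(\varepsilon)\,n^{\frac{3-\tau}{2}(k_{2+}+B(H))+k_1/2}\le \Exp{N(H,M_n^{(\boldsymbol{\alpha})}(\varepsilon))}\le c_2(\varepsilon)\,n^{\frac{3-\tau}{2}(k_{2+}+B(H))+k_1/2}
\]
for finite $c_1(\varepsilon)\le c_2(\varepsilon)$, any gap stemming only from the boundary regime $d_{v_i}d_{v_j}\asymp L_n$, in which~\cite{gao2020} pins the connection probability only up to constants.

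\emph{Concentration.} To obtain the $\op(1)$ statements in (ii), I would bound $\Var{N(H,M_n^{(\boldsymbol{\alpha})}(\varepsilon))}$ by splitting the second moment over pairs of $k$-tuples according to the size $\ell$ of their intersection. The $\ell=0$ term is $(1+o(1))\Exp{N(H,M_n^{(\boldsymbol{\alpha})}(\varepsilon))}^2$, because deleting the $O(1)$ edges of one copy changes $L_n$ and the correction terms negligibly, so the joint probability factorizes over the two tuples up to $1+o(1)$. For $\ell\ge1$, the contribution is bounded by a first moment of the subgraph count over the $(2k-\ell)$-vertex configuration obtained by gluing two copies of $H$ along $\ell$ vertices; connectedness of $H$ together with uniqueness of the maximizer of~\eqref{eq:maxeqsub} forces the corresponding exponent of $n$ to be strictly below $2\big(\tfrac{3-\tau}{2}(k_{2+}+B(H))+\tfrac{k_1}{2}\big)$, so these terms are $o(\Exp{N(H,M_n^{(\boldsymbol{\alpha})}(\varepsilon))}^2)$. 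Hence $N(H,M_n^{(\boldsymbol{\alpha})}(\varepsilon))/\Exp{N(H,M_n^{(\boldsymbol{\alpha})}(\varepsilon))}\plim 1$, and combining with the two-sided first-moment bound yields~\eqref{eq:Nsubmag} and~\eqref{eq:Nsubmaglow} with $f=c_2$ and $\tilde f=c_1$.

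\emph{Localization (part (i)).} Partitioning all $k$-tuples according to the dyadic scales of their degrees (finitely many regimes after collapsing), the same first-moment computation yields, for a scale vector $\boldsymbol{\beta}\in[0,1/(\tau-1)]^k$, an exponent $\Phi(\boldsymbol{\beta})$. The key analytic fact is that $\Phi$ is continuous and piecewise linear, with breakpoints (in each coordinate) only at $\{0,(\tau-2)/(\tau-1),1/2,1/(\tau-1)\}$ --- the values at which $d_{v_i}d_{v_j}/L_n$ for a neighbour, the vertex counts $n^{1-\beta_j(\tau-1)}$, and the correction terms of~\cite{gao2020} change linear behaviour --- while for a degree-one vertex of $H$ the maximizing coordinate is forced to $0$. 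Therefore $\max_{\boldsymbol{\beta}}\Phi$ is attained at a vector with coordinates in this four-element set; matching $\Phi$ on such vectors with~\eqref{eq:maxeqsub} identifies $\max_{\boldsymbol{\beta}}\Phi(\boldsymbol{\beta})=\tfrac{3-\tau}{2}(k_{2+}+B(H))+\tfrac{k_1}{2}$, attained only at the $\boldsymbol{\alpha}$ of~\eqref{eq:alphasub} by the uniqueness hypothesis. Then $\Exp{N(H)-N(H,M_n^{(\boldsymbol{\alpha})}(\varepsilon_n))}$ is a sum over the $O(1)$ non-target regimes of terms of strictly smaller polynomial order, hence $\op(n^{\frac{3-\tau}{2}(k_{2+}+B(H))+k_1/2})$ by Markov, while $N(H,M_n^{(\boldsymbol{\alpha})}(\varepsilon_n))=\thetap{n^{\frac{3-\tau}{2}(k_{2+}+B(H))+k_1/2}}$ by the same first- and second-moment argument (with $\varepsilon_n\to0$ chosen slowly); dividing gives $N(H,M_n^{(\boldsymbol{\alpha})}(\varepsilon_n))/N(H)\plim1$.

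\emph{Main obstacle.} The crux, and the step I expect to be most delicate, is the reduction in the localization step of the continuous optimization over $\boldsymbol{\beta}$ to the discrete problem~\eqref{eq:maxeqsub}. This requires: (a) the precise asymptotics of $\Prob{\URGnd|_{\boldsymbol{v}}=H\mid d_{\boldsymbol{v}}=\boldsymbol{d}'}$ uniformly over $\boldsymbol{d}'$ in the windows, \emph{including} the uniform-random-graph correction terms absent from the erased configuration model, which are exactly what produce the $S_2$-dependent terms $\tfrac{1}{\tau-1}|S_2|(2-\tau-k+|S_1|+k_1)$ and $\tfrac{2E_{S_2}+E_{S_2,S_3}}{\tau-1}$ in~\eqref{eq:maxeqsub}; (b) checking that the resulting exponent is piecewise linear with breakpoints only at the four claimed scales; and (c) verifying that its corner values coincide with the bracket in~\eqref{eq:maxeqsub}. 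A secondary technical issue is controlling the joint connection probabilities and their near-factorization uniformly across the degree windows, in particular in the boundary cases where some $d_{v_i}d_{v_j}\asymp L_n$.
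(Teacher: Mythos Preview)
Your overall architecture---first moment via the factorized subgraph probability, second moment for concentration, and piecewise-linear optimization for localization---matches the paper's, but two points need correction.

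\textbf{The form of the subgraph probability.} There are no ``uniform-random-graph correction factors'' on top of the edge/non-edge product. The key input (the paper's Lemma~\ref{lem:psuburg}, from~\cite{gao2020}) is that
\[
\Prob{\URGnd|_{\boldsymbol v}=\Ecal_H}=\prod_{\{i,j\}\in\Ecal_H}\frac{d_{v_i}d_{v_j}}{L_n+d_{v_i}d_{v_j}}\prod_{\{s,t\}\notin\Ecal_H}\frac{L_n}{L_n+d_{v_s}d_{v_t}}\,(1+o(1)),
\]
valid uniformly over the relevant degree windows. What distinguishes $\URGnd$ from the erased configuration model is not an extra multiplicative correction but the \emph{form of the non-edge factor}: $L_n/(L_n+d_{v_s}d_{v_t})\asymp n^{1-\alpha_s-\alpha_t}$ is only polynomially small when $\alpha_s+\alpha_t>1$, whereas in the ECM it is exponentially small. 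It is precisely these polynomial non-edge penalties between $S_2$--$S_2$ and $S_2$--$S_3$ pairs that generate the $|S_2|(2-\tau-k+|S_1|+k_1)/(\tau-1)$ and $(2E_{S_2}+E_{S_2,S_3})/(\tau-1)$ terms in~\eqref{eq:maxeqsub}. Your description (``each non-edge a complementary factor, plus correction factors'') suggests something layered on top of a Chung--Lu-type $1-\min(1,d_id_j/L_n)$; that is not what happens here, and getting this wrong would lead you to the wrong exponent.

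\textbf{Localization is not a finite-regime argument.} Your claim that after ``collapsing'' there are $O(1)$ non-target regimes, each of strictly smaller polynomial order, does not work as stated: degrees range over $O(\log n)$ dyadic scales per coordinate, and regimes arbitrarily close to the optimizer $\boldsymbol\alpha$ have exponent arbitrarily close to $\Phi(\boldsymbol\alpha)$, so a naive sum over off-target dyadic boxes gives only $O(n^{\Phi(\boldsymbol\alpha)})$, not $o(n^{\Phi(\boldsymbol\alpha)})$. The paper handles this differently: it writes the expected number of off-target copies as an explicit $k$-fold integral and, after the change of variables $y_i=x_i/n^{\alpha_i}$, shows that the resulting integrals over $S_3^*$ and over $S_1^*\cup S_2^*$ are \emph{finite} (Lemmas~\ref{lem:S3intind} and~\ref{lem:S1S2intind}). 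Finiteness is proved coordinate by coordinate, using inequalities on the quantities $\zeta_i$ (Lemma~\ref{lem:dmotif}) that come from perturbing the unique optimal partition $(S_1^*,S_2^*,S_3^*)$ by moving a single vertex between sets; uniqueness of the maximizer makes each such perturbation strictly worse, which translates into the integrand decaying fast enough. This is the genuine technical crux you flag, but your proposed mechanism for resolving it is not the one that works.

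A minor remark on concentration: the paper carries out the second-moment argument explicitly only in the $\sqrt n$ case (Lemma~\ref{lem:varsub}), where bounding the overlap probability by~$1$ and counting tuples suffices. In the general case your claim that the glued exponent is strictly smaller is not immediate---when the overlap involves $S_2^*$ vertices (of which there are only $\Theta_\varepsilon(1)$) the vertex-count saving is $O(1)$, and one must instead use the near-factorization $\Prob{\text{both}}=(1+o(1))\Prob{\boldsymbol u}\Prob{\boldsymbol v}$ from Lemma~\ref{lem:psuburg} for the covariance to be small.
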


Thus, Theorem~\ref{thm:motifs}(i) shows that asymptotically, all induced subgraphs $H$ have vertices in $M_n^{\boldsymbol{\alpha} }(\varepsilon)$, and Theorem~\ref{thm:motifs}(ii) then computes the scaling in $n$ of the number of such induced subgraphs.

Now we study the special class of induced subgraphs for which the unique maximum of \eqref{eq:maxeqsub} is $S_3^*=V_H$. 
By the above interpretation of $S_1^*$, $S_2^*$ and $S_3^*$, these are induced subgraphs where the maximum contribution to the number of such subgraphs is from vertices with degrees proportional to $\sqrt{n}$ in $\URGnd$. For such induced subgraphs, we can obtain the detailed asymptotic scaling including the leading constant:

\begin{theorem}[Induced subgraphs with $\sqrt{n}$ degrees]\label{thm:sqrtsub}
	Let $H$ be a connected graph on $k$ vertices with minimal degree 2 such that the solution to~\eqref{eq:maxeqsub} is unique, and $B (H)=0$. Then,
	\begin{equation}
	\frac{N (H)}{n^{\frac{k}{2}(3-\tau)}}\plim A (H)<\infty,
	\end{equation}
	with
	\begin{equation}\label{eq:Asub}
	A (H) = \left(\frac{C(\tau-1)}{\mu^{(\tau-1)/2}}\right)^k\!\!\int_{0}^{\infty}\!\cdots\! \int_{0}^{\infty}(x_1\cdots x_k)^{-\tau}\prod_{\mathclap{\{{i,j}\}\in \Ecal_{H}}}\frac{x_ix_j}{1+x_ix_j} \ \ \prod_{\mathclap{\{{u,v}\}\notin \Ecal_{H}}}\frac{1}{1+x_ux_v}\dd x_1\cdots \dd x_k.
	\end{equation}
\end{theorem}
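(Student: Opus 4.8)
The plan is to derive the result from Theorem~\ref{thm:motifs} by a careful bookkeeping of the leading constant, exploiting the hypothesis $B(H)=0$ together with minimal degree~$2$ (so $k_1=0$, $k_{2+}=k$), which forces $S_3^*=V_H$: indeed, if $B(H)=0$ and the maximizer is unique, then $|S_1^*|=|S_2^*|=0$ (any nonzero $|S_1^*|$ contributes $+|S_1^*|>0$ to the objective, and $|S_2^*|>0$ would require a compensating negative edge-term; a short case analysis shows neither can occur at an optimum of value $0$), so all vertices of $H$ sit at degree scale $\sqrt{n}$ and the exponent in Theorem~\ref{thm:motifs}(ii) reduces to $\tfrac{3-\tau}{2}\cdot k$. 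By Theorem~\ref{thm:motifs}(i), $N(H)=(1+\op(1))N(H,M_n^{(\boldsymbol\alpha)}(\varepsilon_n))$ for $\varepsilon_n\to 0$, so it suffices to identify the limit of $n^{-k(3-\tau)/2}N(H,M_n^{(\boldsymbol\alpha)}(\varepsilon))$ as $n\to\infty$ and then $\varepsilon\to 0$, and show it equals $A(H)$.

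The key steps are as follows. First I would write $N(H,M_n^{(\boldsymbol\alpha)}(\varepsilon))$ as a sum over ordered $k$-tuples of distinct vertices $\boldsymbol v$ with $d_{v_i}\in[\varepsilon,1/\varepsilon]\sqrt{\mu n}$, divided by $|\mathrm{Aut}(H)|$, of the probability $\Prob{\URGnd|_{\boldsymbol v}=H}$. Second, I would invoke the connection-probability estimates from~\cite{gao2020} (the novel ingredient advertised in the introduction): for vertices with degrees of order $\sqrt{n}$, the probability that $i\sim j$ in $\URGnd$ is asymptotically $\frac{d_{v_i}d_{v_j}/L_n}{1+d_{v_i}d_{v_j}/L_n}$, and — crucially for an \emph{induced} count — these events are asymptotically independent across the $\binom{k}{2}$ pairs (up to a $1+o(1)$ factor, uniformly over the relevant degree range), so that
\begin{equation}
\Prob{\URGnd|_{\boldsymbol v}=H}=(1+o(1))\prod_{\{i,j\}\in\Ecal_H}\frac{d_{v_i}d_{v_j}/L_n}{1+d_{v_i}d_{v_j}/L_n}\prod_{\{u,v\}\notin\Ecal_H}\frac{1}{1+d_{v_u}d_{v_v}/L_n}.
\end{equation}
Third, setting $x_i=d_{v_i}/\sqrt{\mu n}$ and using $L_n=\mu n(1+o(1))$ from~\eqref{eq:mu}, each factor becomes a function of $x_ix_j$, and the sum over vertices turns into a Riemann-type sum. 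Using Assumption~\ref{ass:degrees}\eqref{ass:degreerange} — which gives $1-F_n(j)=Cj^{1-\tau}(1+o(1))$ for $j=O(\sqrt n)$, hence an empirical density of degree-$j$ vertices $\approx C(\tau-1)n j^{-\tau}$ — the sum $\frac1{|\mathrm{Aut}(H)|}\sum_{\boldsymbol v}(\cdots)$ converges, after the normalization $n^{-k(3-\tau)/2}$ and the change of variables, to $\tfrac{1}{|\mathrm{Aut}(H)|}(C(\tau-1)\mu^{-(\tau-1)/2})^k$ times the truncated integral $\int_{[\varepsilon,1/\varepsilon]^k}(x_1\cdots x_k)^{-\tau}\prod_{\{i,j\}\in\Ecal_H}\frac{x_ix_j}{1+x_ix_j}\prod_{\{u,v\}\notin\Ecal_H}\frac1{1+x_ux_v}\,\dd\boldsymbol x$. (I expect the $|\mathrm{Aut}(H)|$ to be absorbed into the definition of "the number of subgraphs isomorphic to $H$" as used in the paper; I would match conventions so the stated $A(H)$ comes out without that factor.) Finally, letting $\varepsilon\to 0$: the integrand is integrable near $0$ because $(x_1\cdots x_k)^{-\tau}\prod_{j\in\Ecal_H}(x_ix_j)$ — with every vertex incident to at least one edge (minimal degree $2$) — is bounded near the origin by $\prod_i x_i^{2-\tau}$ with $2-\tau>-1$, and integrable near $\infty$ because each $x_i$ appears in the denominator of enough factors ($\deg_H(i)\ge 2$ non-missing or missing pairs) to beat $x_i^{-\tau+ (\text{something})}$; a dominated-convergence / monotone-convergence argument upgrades the truncated integral to the full $A(H)$, and a matching concentration bound (second-moment estimate, as in the proof of Theorem~\ref{thm:motifs}) promotes convergence of expectations to convergence in probability.

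The main obstacle is the second step: establishing the asymptotic independence and the precise form of the joint connection probabilities for an \emph{induced} subgraph on $\sqrt n$-degree vertices in the \emph{uniform} (not configuration, not erased) model, uniformly over the degree window $[\varepsilon,1/\varepsilon]\sqrt{\mu n}$, with error $1+o(1)$ strong enough to survive summation over $\Theta(n^{k})$ degree-$\sqrt n$ vertex tuples. This is where the estimates of~\cite{gao2020} must be pushed to a joint statement for all $k$ vertices simultaneously, controlling the dependence introduced by the degree constraints; I would handle it by a switching-type argument or by conditioning on the degrees and comparing to the configuration model locally (where the $\sqrt n$ scale is exactly the threshold at which multi-edges between two such vertices become $O(1)$ in expectation, explaining the $\frac{x_ix_j}{1+x_ix_j}$ form), checking that the simplicity conditioning contributes only a $1+o(1)$ factor on this vertex set. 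The remaining steps — the Riemann-sum convergence and the $\varepsilon\to0$ limit — are routine given Assumption~\ref{ass:degrees} and the integrability bounds above, and the concentration is a standard second-moment computation parallel to the one underlying Theorem~\ref{thm:motifs}.
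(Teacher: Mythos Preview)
Your proposal is essentially correct and follows the same three-ingredient strategy as the paper: (a) the joint edge-probability formula on $\sqrt n$-scale vertices, (b) a Riemann-sum/empirical-measure computation of the expectation giving the truncated integral, and (c) a second-moment bound for concentration. A few remarks on how the paper packages these pieces compared to your outline.

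First, what you flag as the ``main obstacle'' --- the asymptotic product form $\Prob{\URGnd|_{\boldsymbol v}=\Ecal_H}=(1+o(1))\prod_{\Ecal_H}\frac{d_{v_i}d_{v_j}}{L_n+d_{v_i}d_{v_j}}\prod_{\Ecal_H^c}\frac{L_n}{L_n+d_{v_s}d_{v_t}}$ --- is exactly Lemma~\ref{lem:psuburg} in the paper, proved directly from \cite[Corollary~2]{gao2020} by conditioning edge by edge; no additional switching argument is needed. Second, the paper does \emph{not} invoke Theorem~\ref{thm:motifs}(i) to bound the off-window contribution; instead it gives a self-contained estimate $\Exp{N(H,\bar W_n^k(\varepsilon))}=O(n^{k(3-\tau)/2})h(\varepsilon)$ with $h(\varepsilon)\to 0$, and only afterwards proves Theorem~\ref{thm:motifs}(i) in Section~\ref{sec:proofsec2}. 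Your route via Theorem~\ref{thm:motifs}(i) is logically valid (its proof does not use Theorem~\ref{thm:sqrtsub}) but reverses the paper's order. Third, your integrability sketch for $A(H)<\infty$ is too optimistic: the simple power-counting bound near $\infty$ does not work in general, and the paper instead proves finiteness as the $|S_3^*|=k$ case of Lemma~\ref{lem:S3intind}, whose argument genuinely uses the \emph{uniqueness} of the optimizer in~\eqref{eq:maxeqsub}. Finally, the paper counts automorphisms as separate copies (see the remark after Theorem~\ref{thm:sqrtsub}), so no $|\mathrm{Aut}(H)|$ factor appears in $A(H)$.
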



\paragraph{Optimal induced subgraph structures.}
Interestingly, Theorem~\ref{thm:motifs} implies that the number of copies of a specific induced subgraph $H$ is dominated by the number of copies in which its vertices embedded in $\URGnd$ have specific degrees, determined by maximizing~\eqref{eq:maxeqsub}. First restricting to these degrees, and then analyzing the subgraph count, allows to obtain the scaling of the number of induced subgraphs in power-law uniform random graphs where the analysis method by using the configuration model breaks down. Furthermore, this does not only give us information on the total number of subgraphs, but also on where in the graph we are most likely to find them (i.e., on which degrees).

\paragraph{Automorphisms of $H$.}
An automorphism of a graph $H$ is a map $V_H\mapsto V_H$ such that the resulting graph is isomorphic to $H$. 
In Theorem~\ref{thm:sqrtsub} we count automorphisms of $H$ as separate copies, so that we may count multiple copies of $H$ on one set of vertices and edges. Therefore, to count the number of induced subgraphs without automorphisms, one should divide the results of Theorem~\ref{thm:sqrtsub} by the number of automorphisms of $H$.

\subsection{Distinguishing uniform random graphs from rank-1 inhomogeneous random graphs}\label{sec:algthm}
Uniform random graphs create random networks that are uniformly sampled from all graphs with precisely a desired degree sequence. However, in the power-law degree range with $\tau\in(2,3)$, it is difficult to generate such graphs, as the method that generates a configuration model until a simple graph is obtained does not work anymore~\cite{janson2009c}. Therefore, random graph models that generate networks with approximately a desired degree sequence are often used as a proxy for uniform random graphs instead, as many of these models are easy to generate. One such model is the rank-1 inhomogeneous random graph~\cite{chung2002,boguna2003}.
In the inhomogeneous random graph, every vertex $i$ is equipped with a weight $w_i$. We here assume that these weights are sampled from a power-law distribution with $\tau\in(2,3)$. Then, several choices of the connection probability $p(w_i,w_j)$ are possible. Common choices are~\cite{chung2002,boguna2003}
\begin{align}\label{eq:hvm}
	p(w_i,w_j)&=\min\Big(\frac{w_iw_j}{\mu n},1\Big),\\
\label{eq:ecm}
	p(w_i,w_j)&=\me^{-w_iw_j/(\mu n)},\\
\label{eq:irgurg}
	p(w_i,w_j)&=\frac{w_iw_j}{w_iw_j+\mu n},
\end{align}
where $\mu$ denotes the average weight. By choosing the connection probabilities in this manner, the degree of vertex $i$ is approximately $w_i$ with high probability~\cite{hofstad2017b}.

Theorems~\ref{thm:motifs} and~\ref{thm:motifs} indicate that in terms of induced subgraphs, the uniform random graph produces the same results as the rank-1 inhomogeneous random graph with connection probabilities as in~\eqref{eq:irgurg}. Intuitively, this can be seen from the constant $A(H)$ in Theorem~\ref{thm:sqrtsub}, where this connection probability appears in a scaled form. Indeed, our proofs are based on the fact that in the uniform random graph, the probability that two vertices form a connection can be approximated by~\ref{eq:irgurg} (see Lemma~\ref{lem:psuburg}). Therefore, Theorems~\ref{thm:motifs} and~\ref{thm:motifs} also hold for rank-1 inhomogeneous random graphs with connection probability~\eqref{eq:irgurg}.

In~\cite{hofstad2017d,stegehuis2019b}, similar theorems for random graphs with connection probability~\eqref{eq:ecm} and~\eqref{eq:hvm} were derived. The number of all induced subgraphs in the model with connection probability~\eqref{eq:hvm} has the same scaling in $n$ as the number of induced subgraphs in the models with connection probability~\eqref{eq:ecm}.  However, 
the scaling in $n$ of the number of copies of some induced subgraphs in models generated from~\eqref{eq:ecm} and~\eqref{eq:hvm} may be different from the scaling in the uniform random graph. The smallest such subgraphs are of size 6, and are plotted in Figure~\ref{fig:subdifs}. Figure~\ref{fig:subdifs} shows that these two subgraphs appear significantly more often in the uniform random graph than in the inhomogeneous random graphs.

Interestingly, this means that rank-1 inhomogeneous random graphs and uniform random graphs can be distinguished by studying small subgraph patterns of size 6. Previous results showed that random graphs with connection probability~\eqref{eq:irgurg} are distinguishable from those generated by connection probabilities~\eqref{eq:ecm} and~\eqref{eq:hvm} by their maximum clique size, which differs by a factor of $\log(n)$~\cite{janson2010}. However, finding the largest clique is an NP-hard problem~\cite{karp1972}, while this method only needs subgraphs of size 6 as an input, which can be detected in polynomial time. Furthermore, the difference between the amounts of the induced subgraphs of Figure~\ref{fig:subdifs} is not a logarithmic factor, but a polynomial factor, making it easier to detect such differences.

Specifically, we can show that in only $O(n)$ time, it is possible to distinguish between power-law uniform random graphs and the approximate-degree random graph models of~\eqref{eq:hvm} and~\eqref{eq:ecm} with high probability:

\begin{theorem}\label{thm:alg}
	There exists a randomized algorithm that distinguishes power-law uniform random graphs from power-law rank-1 inhomogeneous random graphs with connection probabilities~\eqref{eq:hvm} or~\eqref{eq:ecm} in time $O(n)$ with accuracy at least $1-n^\gamma\me^{-cn^{\beta}}$ for some $\gamma,\beta,c>0$.
\end{theorem}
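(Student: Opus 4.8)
}
The plan is to turn the optimisation problem \eqref{eq:maxeqsub} into a sublinear witness. Fix one of the two size-$6$ graphs $H^*$ of Figure~\ref{fig:subdifs}; it has a unique maximiser of \eqref{eq:maxeqsub}, hence a well-defined profile $\boldsymbol{\alpha}^*$ via \eqref{eq:alphasub} and exponent $a^*=\tfrac{3-\tau}{2}(k_{2+}+B(H^*))+k_1/2>0$. By Theorem~\ref{thm:motifs}(ii), for any fixed small $\varepsilon>0$ one has $N\big(H^*,M_n^{(\boldsymbol{\alpha}^*)}(\varepsilon)\big)=\thetap{n^{a^*}}$ in $\URGnd$. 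The analogous optimisation problems for the rank-$1$ models \eqref{eq:hvm} and \eqref{eq:ecm} from \cite{hofstad2017d,stegehuis2019b} assign to $H^*$ an exponent strictly below $a^*$; denote the larger of the two by $b^*$ and set $\eta=a^*-b^*>0$. The only model-specific input is this strict gap, which is exactly the content of Figure~\ref{fig:subdifs}: the count of $H^*$ restricted to the $\URGnd$-optimal degree window $M_n^{(\boldsymbol{\alpha}^*)}(\varepsilon)$ is $n^{a^*-\eta+o(1)}$ in both inhomogeneous models, a \emph{polynomial} factor below its value in $\URGnd$.

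The algorithm, run on the unknown input graph $G$: compute all degrees, store the classes $V_i(\varepsilon)=\{v\colon d_v\in[\varepsilon,1/\varepsilon](\mu n)^{\alpha_i^*}\}$ in arrays, and build a hash table of the edge set, all in time $\bigO{L_n}=\bigO{n}$ by \eqref{eq:mu}; then draw $m=\lceil c_0 n\rceil$ independent ordered $6$-tuples $\boldsymbol{v}=(v_1,\dots,v_6)$ with $v_i$ uniform in $V_i(\varepsilon)$, and for each test in $\bigO{1}$ time (via the hash table) whether $i\mapsto v_i$ is an isomorphism from $H^*$ onto $\URGn|_{\boldsymbol{v}}$ restricted to $G$; let $Z$ be the number of successes and output ``uniform random graph'' if $Z\ge\theta_n$ and ``inhomogeneous random graph'' otherwise, where $\theta_n=c_0 n\,n^{a^*-\eta/2}/\prod_i|V_i(\varepsilon)|$ is the geometric mean of the two predicted sizes of $Z$. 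The running time is $\bigO{n}$. For the subgraphs of Figure~\ref{fig:subdifs} the profile $\boldsymbol{\alpha}^*$ is such that the $\URGnd$ success probability $N(H^*,M_n^{(\boldsymbol{\alpha}^*)}(\varepsilon))/\prod_i|V_i(\varepsilon)|$ is at least a fixed negative power of $n$, so $\bigO{n}$ samples suffice; when $\boldsymbol{\alpha}^*$ puts coordinates in the $\bigO{1}$-size top-degree class $S_2^*$, one enumerates those $\bigO{1}$ hub choices exhaustively and samples the remaining coordinates from the hubs' neighbour lists rather than globally, which preserves the $\bigO{n}$ bound and keeps the success probability non-negligible.

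For correctness, condition on $G$: the $m$ tuples are i.i.d., so $Z$ is a sum of i.i.d.\ Bernoulli variables of parameter $p(G)=N(H^*,M_n^{(\boldsymbol{\alpha}^*)}(\varepsilon))/\prod_i|V_i(\varepsilon)|$ up to an $o(1)$ correction for repeated coordinates (negligible since the relevant classes have size $n^{\Omega(1)}$, and $\Omega_\varepsilon(1)$ for the hub class with $\varepsilon$ small). Chernoff then gives $\Prob{Z<\theta_n\mid G}\le\me^{-\Omega(mp(G))}$ when $mp(G)\ge 2\theta_n$ and $\Prob{Z\ge\theta_n\mid G}\le\me^{-\Omega(\theta_n\log n)}$ when $mp(G)\le\tfrac12\theta_n$, so it remains to pin $p(G)$ in its window: $N(H^*,M_n^{(\boldsymbol{\alpha}^*)}(\varepsilon))\ge n^{a^*-\eta/4}$ when $G\sim\URGnd$, and $\le n^{a^*-3\eta/4}$ when $G$ is either inhomogeneous model, each with probability $\ge 1-n^{\gamma}\me^{-cn^{\beta}}$. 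These are one-sided, coarsened versions of \eqref{eq:Nsubmag}--\eqref{eq:Nsubmaglow} and of their counterparts in \cite{hofstad2017d,stegehuis2019b}: in the inhomogeneous models the edges are independent, so $N(H^*,M_n^{(\boldsymbol{\alpha}^*)}(\varepsilon))$ is a bounded-degree polynomial of independent indicators whose coefficients and co-degrees are controlled by the bounded degrees inside $M_n^{(\boldsymbol{\alpha}^*)}(\varepsilon)$, and a bounded-difference/Kim--Vu estimate yields the upper tail at rate $\me^{-n^{\beta}}$; in $\URGnd$ the lower tail follows from the connection-probability estimate of Lemma~\ref{lem:psuburg} together with a switching argument (to absorb the global degree constraint), equivalently from exhibiting $n^{\beta}$ nearly vertex-disjoint candidate placements of $H^*$ on the right degrees and applying a bounded-difference inequality. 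Letting $n^{\gamma}$ absorb the $\bigO{1}$ union bound (the two inhomogeneous models, the $\bigO{1}$ hub choices, the finitely many degree-concentration events in the inhomogeneous models) together with polynomial slack, and combining with the Chernoff bounds, gives accuracy $1-n^{\gamma}\me^{-cn^{\beta}}$.

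The main obstacle is precisely this exponential-tail upgrade: Theorems~\ref{thm:motifs} and \ref{thm:sqrtsub} are stated only with $\op(1)$ errors, so one must revisit their proofs---the first-moment computation giving $\Exp{N(H^*,M_n^{(\boldsymbol{\alpha}^*)}(\varepsilon))}=\thetap{n^{a^*}}$ and the variance/switching control---and extract a rate $\me^{-cn^{\beta}}$; identifying an admissible $\beta>0$ reduces to checking, for the specific $H^*$ of Figure~\ref{fig:subdifs}, that the relevant dependency ratio $\Exp{N}^2/\Exp{N^2}$ (or the size of a near-disjoint packing of placements) is a positive power of $n$, and that the matching upper-tail estimate holds in the inhomogeneous models. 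A secondary, purely combinatorial point is to verify directly from \eqref{eq:maxeqsub} that the witness profile $\boldsymbol{\alpha}^*$ makes the per-sample success probability in $\URGnd$ large enough for $\bigO{n}$ samples---which is why the refined neighbour-list sampling is needed whenever $S_2^*\neq\varnothing$.
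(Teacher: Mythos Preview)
Your proposal takes a genuinely different route from the paper, and in doing so creates a real obstacle that the paper avoids entirely.

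The paper's algorithm does \emph{not} threshold a count $Z$ against $\theta_n$. It simply samples sets (two vertices of degree $\sim n^{1/(\tau-1)}$ from $V'$, four of degree $\sim n^{(\tau-2)/(\tau-1)}$ from $W'$) and outputs ``uniform'' the moment it finds \emph{one} copy of $H$, and ``inhomogeneous'' otherwise. The crucial point, which you miss, is that the specific witness $H$ of Figure~\ref{fig:Hgraph} has a \emph{non-edge between its two hub vertices}. In the models \eqref{eq:hvm} and \eqref{eq:ecm}, two vertices of weight $\geq \varepsilon_n n^{1/(\tau-1)}$ are non-adjacent with probability either exactly $0$ (for \eqref{eq:hvm}) or at most $\exp(-n^{(3-\tau)/(\tau-1)}/\log^2 n)$ (for \eqref{eq:ecm}). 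Hence the per-sample success probability in the inhomogeneous models is already $\leq \me^{-n^{\gamma_2}}$, and a union bound over the at most $n$ samples gives failure probability $\leq n\,\me^{-n^{\gamma_2}}$ directly. No Kim--Vu, no bounded-difference inequality, no concentration of subgraph counts is needed on the inhomogeneous side.

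On the uniform-random-graph side, the paper bounds the per-sample success probability from below by $\Theta(n^{(\tau-3)/(\tau-1)}\log(n)^{6(1-\tau)})$ using Theorem~\ref{thm:motifs}(ii) for the numerator and Assumption~\ref{ass:degrees} for the denominator, then computes $(1-p)^{f(n)}\leq \me^{-n^\gamma}$. Your threshold scheme instead needs two-sided exponential concentration of $N(H^*,M_n^{(\boldsymbol{\alpha}^*)}(\varepsilon))$ in \emph{both} models, which you correctly identify as ``the main obstacle'' and then do not resolve. That is a genuine gap: the proposal ends by listing what would have to be checked rather than checking it. The neighbour-list sampling refinement you introduce for the $S_2^*$ coordinates is also unnecessary once one observes that the per-sample success probability in $\URGnd$ is already $n^{-(3-\tau)/(\tau-1)}$ up to logs, so $O(n)$ direct samples suffice.

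In short: the paper exploits a structural feature of $H$ (the hub--hub non-edge is impossible or exponentially unlikely in \eqref{eq:hvm}/\eqref{eq:ecm} but only polynomially unlikely in $\URGnd$) to get the exponential accuracy for free; your thresholding approach trades this for a concentration problem you have not solved.
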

We will prove this theorem in Section~\ref{sec:algproof}, where we also introduce the randomized algorithm that distinguishes between these two random graph models. This algorithm is based on the subgraph displayed in Figure~\ref{fig:subdifs} (c) and (d). It first selects vertices that have degrees close to $n^{1/(\tau-1)}$ and $n^{(\tau-2)/(\tau-1)}$, and then randomly searches among those vertices for the induced subgraph of Figure~\ref{fig:subdifs} (c). In a uniform random graph, this will be successful with high probability, whereas in the rank-1 inhomogeneous random graphs with connection probabilities~\eqref{eq:hvm} and~\eqref{eq:ecm} the algorithm fails with high probability.

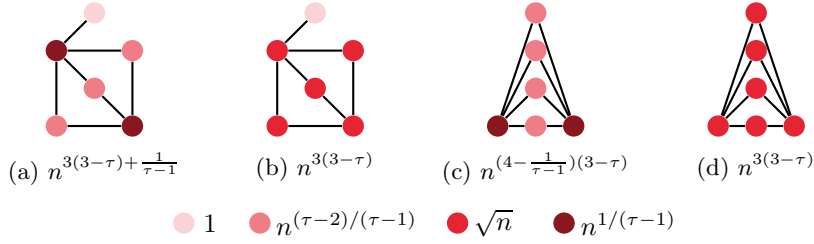
\begin{figure}[tb]
	\centering
	\definecolor{mycolor1}{RGB}{230,37,52}%
	\tikzstyle{every node}=[circle,fill=black!85,minimum size=8pt,inner sep=0pt]
	\tikzstyle{S1}=[fill=mycolor1!60]
	\tikzstyle{S2}=[fill=mycolor1!60!black]
	\tikzstyle{S3}=[fill=mycolor1]
	\tikzstyle{n1}=[fill=mycolor1!20]
	\begin{subfigure}[t]{0.2\linewidth}
	\centering
	\begin{tikzpicture}
		\tikzstyle{edge} = [draw,thick,-]
		\node[S1] (a) at (0,0) {};
		\node[S2] (b) at (1,0) {};
		\node[S2] (c) at (0,1) {};
		\node[S1] (d) at (1,1) {};
		\node[n1] (e) at (0.5,1.5) {};
		\node[S1] (f) at (0.5,0.5) {};
		\draw[edge] (a)--(b);
		\draw[edge] (c)--(a);
		\draw[edge] (d)--(b);
		\draw[edge] (c)--(e);
		\draw[edge] (c)--(f);
		\draw[edge] (b)--(f);
		\draw[edge] (c)--(d);
	\end{tikzpicture}	
	\caption{$n^{3(3-\tau)+\frac{1}{\tau-1}}$}
\end{subfigure}
	\begin{subfigure}[t]{0.2\linewidth}
	\centering
	\begin{tikzpicture}
		\tikzstyle{edge} = [draw,thick,-]
		\node[S3] (a) at (0,0) {};
		\node[S3] (b) at (1,0) {};
		\node[S3] (c) at (0,1) {};
		\node[S3] (d) at (1,1) {};
		\node[n1] (e) at (0.5,1.5) {};
		\node[S3] (f) at (0.5,0.5) {};
		\draw[edge] (a)--(b);
		\draw[edge] (c)--(a);
		\draw[edge] (d)--(b);
		\draw[edge] (c)--(e);
		\draw[edge] (c)--(f);
		\draw[edge] (b)--(f);
		\draw[edge] (c)--(d);
	\end{tikzpicture}	
	\caption{$n^{3(3-\tau)}$}
\end{subfigure}
	\begin{subfigure}[t]{0.2\linewidth}
	\centering
	\begin{tikzpicture}
		\tikzstyle{edge} = [draw,thick,-]
		\node[S2] (a) at (0,0) {};
		\node[S2] (c) at (1,0) {};
		\node[S1] (b) at (0.5,0) {};
		\node[S1] (d) at (0.5,1) {};
		\node[S1] (e) at (0.5,1.5) {};
		\node[S1] (f) at (0.5,0.5) {};
		\draw[edge] (a)--(b);
		\draw[edge] (c)--(b);
		\draw[edge] (a)--(d);
		\draw[edge] (c)--(d);
		\draw[edge] (a)--(f);
		\draw[edge] (c)--(f);
		\draw[edge] (c)--(e);
		\draw[edge] (a)--(e);
	\end{tikzpicture}	
	\caption{$n^{(4-\frac{1}{\tau-1})(3-\tau)}$}
\end{subfigure}
	\begin{subfigure}[t]{0.2\linewidth}
	\centering
	\begin{tikzpicture}
		\tikzstyle{edge} = [draw,thick,-]
		\node[S3] (a) at (0,0) {};
		\node[S3] (c) at (1,0) {};
		\node[S3] (b) at (0.5,0) {};
		\node[S3] (d) at (0.5,1) {};
		\node[S3] (e) at (0.5,1.5) {};
		\node[S3] (f) at (0.5,0.5) {};
		\draw[edge] (a)--(b);
		\draw[edge] (c)--(b);
		\draw[edge] (a)--(d);
		\draw[edge] (c)--(d);
		\draw[edge] (a)--(f);
		\draw[edge] (c)--(f);
		\draw[edge] (c)--(e);
		\draw[edge] (a)--(e);
	\end{tikzpicture}	
	\caption{$n^{3(3-\tau)}$}
\end{subfigure}

	\vspace{-0.5cm}
\begin{subfigure}{\linewidth}
	\centering
	\begin{tikzpicture}
		\node[S2,label={[label distance=0.05cm]0:$n^{1/(\tau-1)}$}] (a) at (5,0) {};
		\node[S3,label={[label distance=0cm]0:$\sqrt{n}$}] (b) at (3.6,0) {};
		\node[S1,label={[label distance=0.05cm]0:$n^{(\tau-2)/(\tau-1)}$}] (c) at (1,0) {};
		\node[n1,label={[label distance=0.05cm]0:$1$}] (c) at (0,0) {};
	\end{tikzpicture}
\end{subfigure}
\caption{Two induced subgraphs on 6 vertices with their scaling in $n$ and optimal degrees the uniform random graph from Theorem~\ref{thm:motifs} (Figures (a) and (c)) and inhomogeneous random graphs with connection probabilities~\eqref{eq:hvm} and~\eqref{eq:ecm} obtained from~\cite{hofstad2017d} (Figures (b) and (d)).}
\label{fig:subdifs}
\end{figure}

\paragraph{Organization of the proofs.}
We will prove Theorems~\ref{thm:motifs}-\ref{thm:alg} in the following sections. First, Section~\ref{sec:maxcont} proves Theorem~\ref{thm:motifs}(ii), by calculating the probability that $H$ appears on a specified subset of vertices, and optimizing that probability. Then, Section~\ref{sec:proof2} proves Theorem~\ref{thm:sqrtsub} with a second moment method. Section~\ref{sec:proofsec2} proves Theorem~\ref{thm:motifs}(i), and Section~\ref{sec:algproof} introduces and analyzes the randomized algorithm that proves Theorem~\ref{thm:alg}.

\section{Proof of Theorem~\ref{thm:motifs}(ii)}
\label{sec:maxcont}

\subsection{Subgraph probability in the uniform random graph}
We first investigate the probability that a given small graph $H$ appears as an induced subgraph of $\URGnd$ on a specific set of vertices $\boldsymbol{v}$. We denote the degree of a vertex $i$ inside induced subgraph $H$ by $d_i^{\sss{(H)}}$.

\begin{lemma}\label{lem:psuburg}
	Let $H$ be a connected graph on $k$ vertices, and let $\boldsymbol{d}$ be a degree sequence satisfying Assumption~\ref{ass:degrees}. Furthermore, assume that $d_{v_i}\gg 1$ or $d_i^{(H)}=1$ for all $i\in[k]$. Then,
	\begin{equation}
		\Probn{\URGnd|_{\boldsymbol{v}}=\Ecal_H} =\prod_{\{i,j\}\in \Ecal_H}\frac{d_{v_i}d_{v_j}}{L_n+d_{v_i}d_{v_j}}\prod_{\{s,t\}\notin \Ecal_H}\frac{1}{L_n+d_{v_s}d_{v_t}}(1+o(1))
	\end{equation}
\end{lemma}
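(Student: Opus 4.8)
The plan is to compute the probability $\Probn{\URGnd|_{\boldsymbol v} = \Ecal_H}$ by summing over which pairs among $\boldsymbol v$ are occupied, leveraging precise asymptotics for the probability of a prescribed \emph{set} of edges in $\URGnd$. Concretely, for a fixed subset $F$ of the $\binom{k}{2}$ pairs on $\boldsymbol v$, I would first estimate $\Probn{F \subseteq \URGnd}$, the probability that all the edges in $F$ are present (with no constraint on the other pairs among $\boldsymbol v$). For this I would invoke the connection-probability estimates cited from~\cite{gao2020}: for two vertices of degrees $d_u, d_v$ (both $\gg 1$, or handled separately when a degree-one vertex of $H$ is involved), the edge $\{u,v\}$ is present in $\URGnd$ with probability asymptotically $\tfrac{d_u d_v}{L_n + d_u d_v}$, and more generally a set $F$ of such edges is present with probability asymptotically $\prod_{\{i,j\}\in F}\tfrac{d_{v_i}d_{v_j}}{L_n+d_{v_i}d_{v_j}}$, up to a $(1+o(1))$ factor. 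The mild dependence between distinct edges is negligible here because $k$ is fixed and each $d_{v_i}^{(H)}$ is bounded, so removing a few half-edges perturbs the remaining degrees and $L_n$ only by $O(1)$, which is absorbed into the error term (using $d_{v_i} \gg 1$ where needed, and treating degree-one vertices of $H$ via the $d_i^{(H)}=1$ hypothesis).

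Next I would pass from ``$F$ present'' to ``exactly $\Ecal_H$ present'' by inclusion–exclusion over the pairs in the complement. Writing $p_{ij} = \tfrac{d_{v_i}d_{v_j}}{L_n+d_{v_i}d_{v_j}}$ and $q_{ij} = \tfrac{L_n}{L_n+d_{v_i}d_{v_j}} = 1 - p_{ij}$, the target probability is
\begin{equation*}
\Probn{\URGnd|_{\boldsymbol v}=\Ecal_H} = \sum_{G \supseteq \Ecal_H}(-1)^{|G\setminus \Ecal_H|}\,\Probn{G\subseteq \URGnd} = \Big(\prod_{\{i,j\}\in\Ecal_H}p_{ij}\Big)\sum_{G'\subseteq \Ecal_H^{\,c}}(-1)^{|G'|}\prod_{\{i,j\}\in G'}p_{ij}\,(1+o(1)),
\end{equation*}
and the inner sum factorizes as $\prod_{\{s,t\}\notin\Ecal_H}(1-p_{st}) = \prod_{\{s,t\}\notin\Ecal_H} q_{st} = \prod_{\{s,t\}\notin\Ecal_H}\tfrac{L_n}{L_n+d_{v_s}d_{v_t}}$. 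Multiplying the two products and pulling an $L_n$ out of the numerator for each present edge (so that $p_{ij} = \tfrac{d_{v_i}d_{v_j}}{L_n}\cdot\tfrac{L_n}{L_n+d_{v_i}d_{v_j}}$, and grouping one factor of the first type with... ) — more cleanly, just write $p_{ij} = \tfrac{d_{v_i}d_{v_j}}{L_n+d_{v_i}d_{v_j}}$ directly — yields exactly the claimed expression $\prod_{\{i,j\}\in\Ecal_H}\tfrac{d_{v_i}d_{v_j}}{L_n+d_{v_i}d_{v_j}}\prod_{\{s,t\}\notin\Ecal_H}\tfrac{1}{L_n+d_{v_s}d_{v_t}}$, once one observes that the leftover powers of $L_n$ match: there are $\binom{k}{2}-E_H$ absent pairs contributing $L_n$ in the numerator from the $q_{st}$'s, but the stated formula has bare $1/(L_n+\cdot)$, so I must be careful and in fact keep $q_{st}=L_n/(L_n+d_{v_s}d_{v_t})$ as is and note the statement as written omits those $L_n$ factors — I would double-check against the degree constraint $\sum_i d_i^{(H)}$ or simply present the identity with the $q_{st}$ form and reconcile with the paper's normalization convention.

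The main obstacle is controlling the dependence structure well enough to justify that $\Probn{F\subseteq \URGnd}$ genuinely factorizes up to $(1+o(1))$, uniformly over all $2^{\binom{k}{2}}$ choices of edge sets entering the inclusion–exclusion. This is where the hypothesis $d_{v_i}\gg 1$ (or $d_i^{(H)}=1$) is essential: it guarantees that conditioning on a bounded number of edges changes each relevant degree by a negligible relative amount, so the estimates from~\cite{gao2020} apply with the same $L_n$ and degrees throughout. I would isolate this as the one quantitative input — stating a clean ``multi-edge'' version of the $\URGnd$ connection-probability lemma — and then the rest is the routine inclusion–exclusion and product manipulation sketched above. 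A secondary point to handle carefully is the degree-one vertices of $H$: for such a vertex $u$, the relevant event is that $u$ connects to its designated neighbour and to no one else among $\boldsymbol v$, which is compatible with $d_u = 1$ being $O(1)$, and the corresponding factor $\tfrac{d_u d_{v}}{L_n + d_u d_v}$ should still be read off from the same estimates (now without needing $d_u\to\infty$, since a degree-one endpoint is exactly the excluded case in the hypothesis).
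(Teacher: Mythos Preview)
Your inclusion--exclusion route is genuinely different from the paper's, and it has a real gap. The paper does not sum over supersets $F\supseteq\Ecal_H$; it uses the chain rule. It fixes an ordering of the edges and non-edges of $H$ and writes
\[
\Probn{\URGnd|_{\boldsymbol v}=\Ecal_H}=\prod_{s}\Prob{e_s\text{ present}\mid \mathcal G_{s-1}^+}\;\prod_{t}\Prob{\bar e_t\text{ absent}\mid \mathcal G_m^+,\mathcal G_{t-1}^-},
\]
and then applies the \emph{conditional} estimate from~\cite{gao2020} (Corollary~2 there), which gives the probability of a single edge given any prescribed set $\mathcal G^+$ of present edges and $\mathcal G^-$ of absent edges. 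Each conditional factor is individually $(1+o(1))$ times the target factor, and a product of $\binom{k}{2}$ such factors is still $(1+o(1))$ times the product --- no alternating signs, no cancellation.

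Your approach breaks precisely at the cancellation step. Even granting $\Probn{F\subseteq\URGnd}=\prod_{\{i,j\}\in F}p_{ij}\,(1+o(1))$ uniformly in $F$, the alternating sum has main term $\prod_{\Ecal_H}p_{ij}\prod_{\Ecal_H^c}(1-p_{st})$ but total absolute error of order $o\bigl(\prod_{\Ecal_H}p_{ij}\prod_{\Ecal_H^c}(1+p_{st})\bigr)$. Whenever some non-edge $\{s,t\}$ has $p_{st}\to 1$ --- which happens exactly when $d_{v_s}d_{v_t}\gg L_n$, e.g.\ two vertices of degree $\Theta(n^{1/(\tau-1)})$ --- the ratio $(1+p_{st})/(1-p_{st})$ diverges and the error term dominates the target. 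The lemma is later applied in exactly this regime (vertices in $S_2^*$, non-edges inside $S_2^*$), so this is not a removable corner case. The chain-rule approach avoids the issue because~\cite{gao2020} already delivers the conditional probability of \emph{absence} in the correct small form $\tfrac{L_n}{L_n+d_id_j}(1+o(1))$, with no subtraction required.

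A secondary problem: for a vertex $i$ with $d_i^{(H)}=1$ and $d_{v_i}=O(1)$, the supersets $F$ in your sum give $i$ degree $\geq 2$, so the ``removing $O(1)$ half-edges changes degrees negligibly'' step fails at $i$. The paper handles this by ordering the edges so that when the unique edge at such $i$ is processed, no half-edge has yet been removed from $i$.

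(Your confusion about the missing $L_n$ in the non-edge factors is justified: the lemma as stated and eq.~(4.4) in the paper drop an $L_n$ in the numerator; the intended factor is $L_n/(L_n+d_{v_s}d_{v_t})$, as used e.g.\ in the proof of Theorem~\ref{thm:sqrtsub}.)
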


\begin{proof}
	Suppose that $G^+$ is a subset of the edges of $H$, and $G^-$ a subset of the non-edges of $H$. Let $\mathcal{G}^-$ denote the event that the non-edges of $G^-$ are not present in $\URGnd|_{\boldsymbol{v}}$ and let $\mathcal{G}^+$ denote the event that the edges of $G^+$ are present in $\URGnd|_{\boldsymbol{v}}$.
	Let $d_{(1)}\geq d_{(2)}\geq \dots\geq d_{(n)}$ denote the ordered version of $\boldsymbol{d}$. Then, by Assumption~\ref{ass:degreeall}
	\begin{equation}
		d_{(k)}\leq W \left(\frac{n}{k}\right)^{1/(\tau-1)},
	\end{equation}
for some constant $W>0$. Therefore,
\begin{align}
	\sum_{i=1}^{Cn^{1/(\tau-1)}}d_{(i)} & \leq 	\sum_{i=1}^{Cn^{1/(\tau-1)}} W\left(\frac{n}{i}\right)^{1/(\tau-1)}\nonumber\\
	& \leq Wn^{1/(\tau-1)}\int_{0}^{Cn^{1/(\tau-1)}}x^{1/(1-\tau)}\dd x\nonumber\\
	& = \tilde{C}n^{1/(\tau-1)}n^{(\tau-2)/(\tau-1)^2}
\end{align}
for some $\tilde{C}>0$. Thus, $\sum_{i=1}^{Cn^{1/(\tau-1)}}d_{(i)}=o(n)$ for $\tau\in(2,3)$, while $L_n=\Theta(n)$ by~\eqref{eq:mu}. Therefore, we may apply~\cite[Corollary~2]{gao2020}
	to obtain
	\begin{equation}\label{eq:puvurg1}
		\Prob{\{i,j\}\in \URGnd \mid \mathcal{G}^-,\mathcal{G}^+}=\frac{(d_i-d_i^{(G)})(d_j-d_j^{(G)})}{L_n+(d_i-d_i^{(G)})(d_j-d_j^{(G)})}(1+o(1)),
	\end{equation}
where $d_i^{(G)}$ denotes the degree of vertex $i$ within $G^+$.
When $d_i\gg 1$, then $d_i-d_i^{(G)}=d_i(1+o(1))$ as $d_i^{(G)}\leq k-1$. Thus, when $d_i\gg1$ or $d_i^{(G)}=0$ and $d_j\gg1$ or $d_j^{(G)}=0$, then~\eqref{eq:puvurg1} becomes 
	\begin{equation}\label{eq:puvurg}
	\Prob{\{i,j\}\in \URGnd \mid \mathcal{G}^-,\mathcal{G}^+}=\frac{d_id_j}{L_n+d_id_j}(1+o(1)).
\end{equation}
Therefore also 
	\begin{equation}\label{eq:puvnonurg}
	\Prob{\{i,j\}\notin \URGnd \mid \mathcal{G}^-,\mathcal{G}^+}=\frac{1}{L_n+d_id_j}(1+o(1)).
\end{equation}
We now use~\eqref{eq:puvurg} and~\eqref{eq:puvnonurg} to compute the probability that $H$ appears as an induced subgraph on vertices $\boldsymbol{v}$. 
Let the $m$ edges of $H$ be denoted by $e_1={\{i_1,j_1\},\ldots,e_m=\{i_m,j_m\}}$, and the ${k\choose 2}-m$ non-edges of $H$ by $\bar{e}_1={\{w_1,z_1\},\ldots,\bar{e}_{k(k-1)/2-m}=\{w_m,z_m\}}$. Furthermore, define $G_0^+=\emptyset$ and $G_s^+=G_{s-1}^+\cup \{v_{i_s},v_{j_s}\}$. Similarly, define $G_0^-=\emptyset$ and $G_s^-=G_{s-1}^-\cup \{v_{w_s},v_{z_s}\}$. Then,
\begin{align}\label{eq:psuburg1}
	\Probn{\URGnd|_{\boldsymbol{v}}=\Ecal_H}& = \prod_{s=1}^m\Prob{\{v_{i_s},v_{j_s}\}\in \URGnd\mid G_{s-1}^+}\nonumber\\
	& \times \prod_{t=1}^{{k\choose 2}-m}\Prob{\{v_{w_s},v_{z_s}\}\notin \URGnd\mid G_m^+,G_{s-1}^-}.
\end{align}
We then use~\eqref{eq:puvurg} and~\eqref{eq:puvnonurg}. This is allowed because $d_{v_i}\gg 1$ or $d_i^{\sss{(H)}}=1$ for all $i\in[k]$ so that when the edge $e_l$ incident to vertex $i$ is added to in~\eqref{eq:psuburg1}, then $d_i^{\sss{(G_{l-1}^+)}}=0$. Indeed, when $d_i^{\sss{(H)}}=1$, then $i$ has no other incident edges in $H$, and therefore degree zero in $G_{l-1}^+$. Thus, we obtain 
\begin{equation}
	\Probn{\URGnd|_{\boldsymbol{v}}=\Ecal_H}=\prod_{\{i,j\}\in \Ecal_H}\frac{d_{v_i}d_{v_j}}{L_n+d_{v_i}d_{v_j}}\prod_{\{s,t\}\notin \Ecal_H}\frac{1}{L_n+d_{v_s}d_{v_t}}(1+o(1)).
\end{equation}
\end{proof}

\subsection{Optimizing the probability of a subgraph}\label{sec:opt}
We now study the probability that $H$ is present as an induced subgraph on vertices $(v_1, \ldots, v_k)$ of specific degrees. 
Assume that $d_{{v_i}}\in[\varepsilon,1/\varepsilon]n^{\alpha_i}$ with $\alpha_i\in[0,1/(\tau-1)]$ for $i\in[k]$, so that $d_{{v_i}}=\Theta(n^{\alpha_i})$.

Let $H$ be an induced subgraph on $k$ vertices labeled as $1,\ldots,k$. 
We now study the probability that $\URGnd|_{\boldsymbol{v}}=\mathcal{E}_H$.

 When $\alpha_i+\alpha_j< 1$, by Lemma~\ref{lem:psuburg}
\begin{equation}
	\Prob{X_{v_i,v_j}=0}=\Theta\Big(1-\frac{n^{\alpha_i+\alpha_j}}{n^{\alpha_i+\alpha_j}+\mu n} \Big)(1+o(1))=1+o(1),
\end{equation}
while 
\begin{equation}
	\Prob{X_{v_i,v_j}=1}=\Theta\Big(\frac{n^{\alpha_i+\alpha_j}}{n^{\alpha_i+\alpha_j}+\mu n} \Big)(1+o(1))=\Theta(n^{\alpha_i+\alpha_j-1}).
\end{equation}
On the other hand, for $\alpha_i+\alpha_j>1$,
\begin{equation}
	\Prob{X_{v_i,v_j}=0}=\Theta\Big(1-\frac{n^{\alpha_i+\alpha_j}}{n^{\alpha_i+\alpha_j}+\mu n} \Big)(1+o(1))=\Theta(n^{1-\alpha_i-\alpha_j}),
\end{equation}
while 
\begin{equation}
	\Prob{X_{v_i,v_j}=1}=\Theta\Big(\frac{n^{\alpha_i+\alpha_j}}{n^{\alpha_i+\alpha_j}+\mu n} \Big)(1+o(1))=1+o(1).
\end{equation}
Furthermore, when $\alpha_i+\alpha_j=1$, $\Prob{X_{v_i,v_j}=0}=\Theta(1)$ and $\Prob{X_{v_i,v_j}=1}=\Theta(1)$. 
Combining this with Lemma~\ref{lem:psuburg} shows that we can write the probability that $H$ occurs as an induced subgraph on $\boldsymbol{v}=(v_1,\cdots,v_k)$ as
\begin{equation}\label{eq:Ginduced}
	\begin{aligned}[b]
		&\Prob{\URGnd|_{\boldsymbol{v}}=\Ecal_H} = \Theta\bigg(\prod_{\{i,j\}\in \Ecal_H\colon \alpha_{i}+\alpha_{j}<1}\!\!\!\!\!\! n^{\alpha_{i}+\alpha_{j}-1} \ \ \ \prod_{\mathclap{\{u,v\}\notin \Ecal_H\colon 	\alpha_u+\alpha_v>1}} {n^{1-\alpha_u+\alpha_v}}\bigg).
	\end{aligned}
\end{equation}

Furthermore, by Assumption~\ref{ass:degrees} the number of vertices with degrees in $[\varepsilon,1/\varepsilon](\mu n)^\alpha$ is $\Theta(n^{(1-\tau)\alpha+1})$ for $\alpha\leq \frac{1}{\tau-1}$. Then, for $M_n^{(\boldsymbol{\alpha})}$ as in~\eqref{eq:Mnalph},
	\begin{equation}\label{eq:numhdeg}
	\# \text{ sets of vertices with degrees in }M_n^{(\boldsymbol{\alpha})}=\Theta( n^{k+(1-\tau)\sum_i\alpha_i}).
	\end{equation}
Thus,
	\begin{equation}\label{eq:Nalph}
	N (H,M_n^{(\boldsymbol{\alpha})}(\varepsilon))=\Theta_{\sss{\prob}}\Big( n^{k+(1-\tau)\sum_i\alpha_i} \ \ \prod_{\mathclap{\{i,j\}\in \Ecal_H:\alpha_i+\alpha_j<1}} \  \ \   n^{\alpha_{i}+\alpha_j-1}\ \  \ \prod_{\mathclap{\{u,v\}\notin \Ecal_H:\alpha_u+\alpha_v>1}} \ \  n^{-\alpha_{u}-\alpha_v+1}\Big).
	\end{equation}
Maximizing the exponent yields
\begin{equation}\label{eq:maxeqalphaind}
	\begin{aligned}[b]
		& \max_{\boldsymbol{\alpha}} (1-\tau)\sum_{i}\alpha_i +\ \  \ \ \ \sum_{\mathclap{\{i,j\}\in \Ecal_H\colon \alpha_i+\alpha_j<1}} \ \ \ (\alpha_i+\alpha_j-1)- \quad \ \sum_{\mathclap{\{u,v\}\notin \Ecal_H\colon \alpha_u+\alpha_v>1}} \ \ \ (\alpha_u+\alpha_v-1)
	\end{aligned}
\end{equation} 

The following lemma shows that this optimization problem attains its maximum for specific values of the exponents $\alpha_i$:

\begin{lemma}[Maximum contribution to subgraphs]\label{lem:maxmotif}
	Let $H$ be a connected graph on $k$ vertices. If the solution to~\eqref{eq:maxeqalphaind} is unique, then the optimal solution satisfies $\alpha_i\in\{0,\tfrac{\tau-2}{\tau-1},\tfrac{1}{2},\tfrac{1}{\tau-1}\}$ for all $i$. If it is not unique, then there exist at least 2 optimal solutions with $\alpha_i\in\{0,\tfrac{\tau-2}{\tau-1},\tfrac{1}{2},\tfrac{1}{\tau-1}\}$  for all $i$. In any optimal solution $\alpha_i=0$ if and only if vertex $i$ has degree one in $H$.
\end{lemma}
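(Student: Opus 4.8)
The optimization problem \eqref{eq:maxeqalphaind} is a linear program once we fix, for each edge $\{i,j\}\in\Ecal_H$, whether $\alpha_i+\alpha_j<1$ or $\geq 1$, and similarly for each non-edge. So the strategy is: (1) show that the objective, as a function of $\boldsymbol{\alpha}\in[0,\tfrac{1}{\tau-1}]^k$, is piecewise linear and concave (or at least that its maximum is attained at a vertex of the arrangement of hyperplanes $\{\alpha_i+\alpha_j=1\}$ together with the box constraints); (2) characterize the possible coordinates of such a vertex. First I would rewrite the objective as
\begin{equation}
\Phi(\boldsymbol{\alpha}) = (1-\tau)\sum_i\alpha_i + \sum_{\{i,j\}\in\Ecal_H}\min(\alpha_i+\alpha_j-1,0) - \sum_{\{u,v\}\notin\Ecal_H}\max(\alpha_u+\alpha_v-1,0),
\end{equation}
which makes it manifestly concave: it is a sum of linear terms and of functions of the form $\min(\text{linear},0)$ and $-\max(\text{linear},0)$, all concave. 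Hence the maximum over the polytope $[0,\tfrac{1}{\tau-1}]^k$ is attained, and if unique it is attained at an extreme point of the set of maximizers; more usefully, by concavity and piecewise linearity it is attained at a point where enough constraints are tight.

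**Identifying the candidate values.** The key step is to show any maximizer $\boldsymbol{\alpha}^\ast$ can be taken to lie at a vertex of the hyperplane arrangement generated by $\{\alpha_i=0\}$, $\{\alpha_i=\tfrac{1}{\tau-1}\}$ and $\{\alpha_i+\alpha_j=1\}$ for all pairs $i,j$. At such a vertex, each coordinate $\alpha_i^\ast$ is determined by a maximal independent set of tight constraints through it; chasing what values can result, one finds each $\alpha_i^\ast$ equals $0$, $\tfrac{1}{\tau-1}$, or is linked via chains of equalities $\alpha_i+\alpha_j=1$. A chain of such equalities forces the coordinates to alternate between some value $a$ and $1-a$; combined with at least one coordinate in the chain also hitting a box face ($0$ or $\tfrac{1}{\tau-1}$), the only consistent values are $a\in\{0,1,\tfrac{1}{\tau-1},\tfrac{\tau-2}{\tau-1},\tfrac12\}$, and since $1>\tfrac{1}{\tau-1}$ is outside the box for $\tau\in(2,3)$ we discard $a=1$ (equivalently $a=0$ on the partner). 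This leaves exactly $\{0,\tfrac{\tau-2}{\tau-1},\tfrac12,\tfrac{1}{\tau-1}\}$. For the non-unique case, if $\Phi$ is constant along a face of maximizers, that face still has vertices in the arrangement, giving at least two solutions of the stated form. (I would also need to rule out interior maximizers on a flat piece not touching any hyperplane: on such a piece $\Phi$ is linear with gradient $(1-\tau)\mathbf{1}\ne 0$, so it cannot be maximal in the interior — it pushes every $\alpha_i$ down to $0$.)

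**The degree-one characterization.** For the last sentence — $\alpha_i^\ast=0$ iff $d_i^{(H)}=1$ — the ``if'' direction: if $d_i^{(H)}=1$, then in the optimization that variable appears with coefficient $(1-\tau)$ plus at most one $\min(\alpha_i+\alpha_j-1,0)$ term (from its unique incident edge) minus non-edge terms; since $\tau>2$, decreasing $\alpha_i$ always weakly increases $\Phi$ (the linear term gains, the $\min$ term loses at most $1$ per unit but with the prefactor comparison $1-\tau<-1$, wait — actually $\tau-1<2<$ ... let me be careful) — precisely, the net coefficient of $\alpha_i$ when $\alpha_i+\alpha_j<1$ is $(1-\tau)+1=2-\tau<0$, and when $\alpha_i+\alpha_j\geq1$ it is $1-\tau<0$, and non-edges only add further negative contributions, so $\Phi$ is strictly decreasing in $\alpha_i$ near any feasible point, forcing $\alpha_i^\ast=0$. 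Conversely, if $d_i^{(H)}\geq 2$, a short direct computation shows raising $\alpha_i$ slightly from $0$ gains at least $2\cdot(2-\tau+1)=2(3-\tau)>0$ from two incident edges against the loss $\tau-1$ from the linear term... I should instead argue: at an optimum with $\alpha_i=0$ but $d_i^{(H)}\geq2$, all incident edges satisfy $\alpha_i+\alpha_j=\alpha_j\le\tfrac{1}{\tau-1}<1$, so the directional derivative in $+\alpha_i$ equals $(1-\tau)+d_i^{(H)} - (\#\text{non-neighbours } v \text{ with } \alpha_v>1)$; for $d_i^{(H)}\ge 2$ this is $\ge 2-\tau+1 = 3-\tau - (\ldots)$ — I'd need the non-edge term not to dominate, which holds because vertices with $\alpha_v=\tfrac{1}{\tau-1}$ paired with $\alpha_i=0$ give $\alpha_i+\alpha_v<1$, contributing nothing. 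So the derivative is $\geq 2-(\tau-1)=3-\tau>0$, contradicting optimality.

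**Main obstacle.** The hard part is step (2): cleanly proving that \emph{some} maximizer lies at a vertex of the hyperplane arrangement and then exhaustively but correctly enumerating which coordinate values a vertex can carry, handling the chains of tight constraints $\alpha_i+\alpha_j=1$ and their interaction with the box faces. Concavity gives existence of a maximizer and reduces to extreme points of the maximizing face, but translating ``extreme point of the face of maximizers'' into ``vertex of the full arrangement'' requires care — one must argue that moving along any edge of the arrangement out of a purported non-vertex maximizer cannot strictly increase $\Phi$, so one can always slide to a genuine vertex without loss, and then that the vertices of an arrangement of the hyperplanes $\{\alpha_i+\alpha_j=1\}$ and box faces in $[0,\tfrac{1}{\tau-1}]^k$ have coordinates only in $\{0,\tfrac{\tau-2}{\tau-1},\tfrac12,\tfrac{1}{\tau-1}\}$ (noting that, e.g., $\tfrac12$ arises from a self-paired-type constraint $\alpha_i+\alpha_j=1$ with $\alpha_i=\alpha_j$ forced by symmetry of a longer relation, and $\tfrac{\tau-2}{\tau-1}$ from $\alpha_i + \tfrac{1}{\tau-1}=1$). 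The combinatorial bookkeeping of these chains, rather than any deep idea, is where the real work lies.
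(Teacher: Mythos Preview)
Your approach is correct but takes a genuinely different route from the paper's. The paper substitutes $\beta_i=\alpha_i-\tfrac12$, so that the pairwise terms depend only on the sign of $\beta_i+\beta_j$; it then argues directly (Step~1) that $\beta_i=-\tfrac12$ iff $d_i^{(H)}=1$, and (Step~2) that if one sets $\tilde\beta=\min\{|\beta_i|:\beta_i\ne0\}$, the total contribution from all vertices with $|\beta_i|=\tilde\beta$ is \emph{linear} in $\tilde\beta$ (scaling them simultaneously flips no sign $\beta_i+\beta_j$), so a unique optimum forces $\tilde\beta$ to an endpoint of $[0,\tfrac{3-\tau}{2(\tau-1)}]$; iterating handles each level of $|\beta|$. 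Your route---observe that $\Phi$ is concave and piecewise linear, deduce that a maximizer sits at a vertex of the arrangement generated by the box faces and the hyperplanes $\alpha_i+\alpha_j=1$, then enumerate the coordinate values such a vertex can carry---is more geometric and bypasses both the $\beta$ substitution and the scaling trick. What your approach buys is a structural picture: each connected component of tight pair-constraints is bipartite and anchored to the face $\alpha=\tfrac{1}{\tau-1}$ (yielding $\{\tfrac{\tau-2}{\tau-1},\tfrac{1}{\tau-1}\}$), or contains an odd cycle (yielding $\tfrac12$), or is a singleton on a box face (yielding $0$ or $\tfrac{1}{\tau-1}$). What the paper's approach buys is that the linearity-in-$\tilde\beta$ observation replaces your entire vertex enumeration with a one-line argument. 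Your degree-one analysis via one-sided derivatives is essentially the paper's Step~1; note that the paper actually extracts the slightly stronger bound $\alpha_i\ge\tfrac{\tau-2}{\tau-1}$ for $d_i^{(H)}\ge2$, though the lemma as stated does not require it. One small cleanup: your parenthetical claiming the gradient on a flat piece is $(1-\tau)\mathbf{1}$ is not right in general (the active $\min/\max$ terms contribute too), but you do not need that aside---piecewise linearity alone already gives a vertex maximizer on each linearity cell.
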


The proof of this lemma follows a similar structure as the proof of~\cite[Lemma 4.2]{hofstad2017d}, and we therefore defer it to Appendix~\ref{sec:prooflembeta}.
We now use the optimal structure of this optimization problem to prove Theorem~\ref{thm:motifs}(ii):

\begin{proof}[Proof of Theorem~\ref{thm:motifs}(ii)]
	
	Let $\boldsymbol{\alpha}$ be the unique optimizer of~\eqref{eq:maxeqalphaind}. 
	By Lemma~\ref{lem:maxmotif}, the maximal value of~\eqref{eq:maxeqalphaind} is attained by partitioning $V_H\setminus V_1$ into the sets $S_1,S_2,S_3$ such that vertices in $S_1$ have $\alpha_i=\tfrac{\tau-2}{\tau-1}$, vertices in $S_2$ have $\alpha_i =\tfrac{1}{\tau-1}$, vertices in $S_3$ have $\alpha_i=\tfrac{1}{2}$ and vertices in $V_1$ have $\alpha_i =0$. Then, the edges with $\alpha_i+\alpha_j <1$ are edges inside $S_1$, edges between $S_1$ and $S_3$ and edges from degree 1 vertices. Furthermore, non-edges with $\alpha_i+\alpha_j>1$ are edges inside $S_2$ (of which there are $\frac 12 |S_2|(|S_2|-1)-E_{S_2}$) or edges between $S_2$ and $S_3$ (of which there are $|S_2||S_3|-E_{S_2,S_3}$). Recall that the number of edges inside $S_1$ is denoted by $E_{S_1}$, the number of edges between $S_1$ and $S_3$ by $E_{S_1,S_3}$ and the number of edges between $V_1$ and $S_i$ by $E_{S_1,V_1}$. Then we can rewrite~\eqref{eq:maxeqalphaind} as
	\begin{equation}
		\label{eq:maxtemp}
		\begin{aligned}[b]
			\max_{\mathcal{P}} \ &\Big[(1-\tau) \left(\frac{\tau-2}{\tau-1}\abs{S_1}+\frac{1}{\tau-1}\abs{S_2}+\tfrac 12 \abs{S_3}\right)+\frac{\tau-3}{\tau-1}E_{S_1}\\
			&\qquad +\frac{\tau-3}{2(\tau-1)}E_{S_1,S_3}
			-\frac{E_{S_1,V_1}}{\tau-1}-\frac{\tau-2}{\tau-1}E_{S_2,V_1}-\frac 12 E_{S_3,V_1}\\
			& \qquad +\left(\frac 12 |S_2|(|S_2|-1)-E_{S_2}\right)\frac{\tau-3}{\tau-1}+(|S_2||S_3|-E_{S_2,S_3})\frac{\tau-3}{2(\tau-1)}\Big],
		\end{aligned}
	\end{equation}
	over all partitions $\mathcal{P}=(S_1,S_2,S_3)$ of $V_H\setminus V_1$. Using that $|S_3|=k-\abs{S_1}-\abs{S_2}-k_1$ and  ${E_{S_3,V_1}=k_1-E_{S_1,V_1}-E_{S_2,V_1}}$, where $k_1=\abs{V_1}$ and extracting a factor $(3-\tau)/2$ shows that this is equivalent to
	\begin{equation}
		\label{eq:maxtemp2}
		\begin{aligned}[b]
			&\frac{1-\tau}{2}k+	\max_{\mathcal{P}} \ \frac{(3-\tau)}{2}\Big( \abs{S_1}+\frac{1}{\tau-1}\abs{S_2}(2-\tau-k+|S_1|+k_1)+\frac{\tau-2}{3-\tau} k_1\\
			& \qquad \qquad \qquad-\frac{2E_{S_1}-2E_{S_2}+E_{S_1,S_3}-E_{S_2,S_3}}{\tau-1}  -\frac{E_{S_1,V_1}-E_{S_2,V_1}}{\tau-1}\Big).
		\end{aligned}
	\end{equation}
	Since $k$ and $k_1$ are fixed and $3-\tau>0$, we need to maximize
	\begin{align}
		\label{eq:maxeq}
		B(H)&=\max_{\mathcal{P}}\Big[\abs{S_1}+\frac{1}{\tau-1}\abs{S_2}(2-\tau-k+|S_1|+k_1) \nonumber\\
		& \quad -\frac{2E_{S_1}-2E_{S_2}+E_{S_1,S_3}-E_{S_2,S_3}+E_{S_1,V_1}-E_{S_2,V_1}}{\tau-1}\Big],
	\end{align}
	which equals \eqref{eq:maxeqsub}.

	By~\eqref{eq:Nalph}, the maximal value of $	N (H,M_n^{(\boldsymbol{\alpha})}(\varepsilon))$ then scales as 
	\begin{equation}\label{eq:maxcontrscaling}
		n^{\frac{3-\tau}{2}(k+B (H))+\frac{\tau-2}{2}k_1}=n^{\frac{3-\tau}{2}(k_{2+}+B (H))+{k_1/2}} ,
	\end{equation}
	which proves Theorem~\ref{thm:motifs}(ii). 
\end{proof}

\section{Proof of Theorem~\ref{thm:sqrtsub}}\label{sec:proof2}
In this section, we will prove Lemma~\ref{lem:convNH} that is given below, from which we prove Theorem~\ref{thm:sqrtsub}. For that, we define the special case of $M_n^{\sss{(\boldsymbol{\alpha})}}(\varepsilon)$ of~\eqref{eq:Mnalph} where $\alpha_i=\tfrac{1}{2}$ for all $i\in V_H=[k]$ as
	\begin{equation}
		W_n^k(\varepsilon)=\{(v_1,\ldots,v_k)\colon d_{{v_s}}\in[\varepsilon,1/\varepsilon]\sqrt{\mu n} \quad \forall s \in[k]\},
	\end{equation}
\cs{and let $\bar{W}_n^k(\varepsilon)$ denote the complement of $W_n^k(\varepsilon)$.}
	We denote the number of subgraphs $H$ with all vertices in $W_n^k(\varepsilon)$ by $N (H,W_n^k(\varepsilon))$.

\begin{lemma}[Major contribution to subgraphs]\label{lem:convNH}
	Let $H$ be a connected graph on $k{\geq 3}$ vertices such that~\eqref{eq:maxeqsub} is uniquely optimized at $S_3=[k]$, so that $B (H)=0$. Then,
	\begin{enumerate}[(i)]
	\item \label{lem:convNH1} the number of subgraphs with vertices in $W_n^k(\varepsilon)$ satisfies
	\begin{align}
		\frac{N (H,W_n^k(\varepsilon))}{n^{\frac{k}{2}(3-\tau)}} 
		\to & (C(\tau-1))^k\mu^{-\frac{k}{2}(\tau-1)} \int_{\varepsilon}^{1/\varepsilon}\!\!\cdots \int_{\varepsilon}^{1/\varepsilon}(x_1\cdots x_k)^{-\tau}\nonumber\\
		& \times \prod_{\mathclap{\{i,j\}\in \Ecal_H}}\frac{x_ix_j}{1+x_ix_j} \ \ \prod_{\mathclap{\{u,v\}\notin \Ecal_H}}\frac{1}{1+x_ux_v}\dd x_1\cdots \dd x_k .
	\end{align}
	\item \label{lem:Afinite}
$A (H)$ defined in~\eqref{eq:Asub} satisfies $A (H)<\infty$.
	\end{enumerate}
	\end{lemma}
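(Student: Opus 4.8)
\textbf{Proof plan for Lemma~\ref{lem:convNH}.}

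The plan is to prove part~\eqref{lem:convNH1} by a first and second moment computation on $N(H,W_n^k(\varepsilon))$, and then deduce part~\eqref{lem:Afinite} by letting $\varepsilon\to 0$ and showing the limiting integral converges. For the first moment, I would write
\begin{equation*}
\Exp{N(H,W_n^k(\varepsilon))}=\sum_{\boldsymbol{v}\colon d_{v_s}\in[\varepsilon,1/\varepsilon]\sqrt{\mu n}\ \forall s}\Prob{\URGnd|_{\boldsymbol{v}}=\Ecal_H},
\end{equation*}
where the sum runs over ordered $k$-tuples of distinct vertices. Since all relevant degrees are $\Theta(\sqrt{n})\gg 1$, Lemma~\ref{lem:psuburg} applies and gives $\Prob{\URGnd|_{\boldsymbol{v}}=\Ecal_H}=\prod_{\{i,j\}\in\Ecal_H}\frac{d_{v_i}d_{v_j}}{L_n+d_{v_i}d_{v_j}}\prod_{\{s,t\}\notin\Ecal_H}\frac{1}{L_n+d_{v_s}d_{v_t}}(1+o(1))$. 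Writing $d_{v_s}=x_s\sqrt{\mu n}$ and using $L_n=\mu n(1+o(1))$ from~\eqref{eq:mu}, the edge factors become $\frac{x_ix_j}{1+x_ix_j}$ and the non-edge factors $\frac{1}{\mu n}\cdot\frac{1}{1+x_ux_v}$, producing an overall power $(\mu n)^{-(\binom{k}{2}-m)}$ where $m=E_{H}$. The number of vertices with degree near $x\sqrt{\mu n}$ is governed by Assumption~\ref{ass:degreerange}, which by~\eqref{D-tail} gives an empirical density $\approx C(\tau-1)(x\sqrt{\mu n})^{-\tau}\sqrt{\mu n}\,dx\cdot n$; multiplying $k$ such factors yields $n^k (\mu n)^{k/2}(\mu n)^{-\tau k/2}(C(\tau-1))^k(x_1\cdots x_k)^{-\tau}dx_1\cdots dx_k$. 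Collecting the powers of $n$ and $\mu n$ and checking that $k+k/2-\tau k/2 - (\binom{k}{2}-m)=\tfrac{k}{2}(3-\tau)$ when $B(H)=0$ (this is exactly the exponent identity~\eqref{eq:maxcontrscaling} specialized to $S_3=[k]$, so it holds by hypothesis), I get the claimed Riemann-sum convergence to the stated integral over $[\varepsilon,1/\varepsilon]^k$. One technical point: I should confirm the Riemann sum converges, i.e., that the integrand is bounded and continuous on the compact box $[\varepsilon,1/\varepsilon]^k$ — it is, since $x_ix_j$ is bounded away from $0$ and $\infty$ there.

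For the concentration, I would apply a second moment argument: compute $\Exp{N(H,W_n^k(\varepsilon))^2}$ by summing over pairs of tuples $(\boldsymbol{v},\boldsymbol{v}')$ according to their overlap. The diagonal-type terms where $\boldsymbol{v},\boldsymbol{v}'$ share few vertices contribute $\Exp{N(H,W_n^k(\varepsilon))}^2(1+o(1))$, and I need the joint probability of both copies appearing to factorize appropriately — here I can again invoke the conditional connection-probability estimate behind Lemma~\ref{lem:psuburg} (Corollary~2 of~\cite{gao2020}), since conditioning on a bounded number of edge/non-edge statuses does not change the leading asymptotics. The overlapping terms, where the two tuples share $\ell\geq 1$ vertices, contribute a lower order of $n$: sharing a vertex removes one factor of roughly $n^{(3-\tau)/2}$ worth of vertex choices while only saving a bounded number of edge factors, and since $B(H)=0$ is the \emph{unique} optimum of~\eqref{eq:maxeqsub}, any such ``merged'' configuration corresponds to a strictly suboptimal exponent. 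Hence $\Var{N(H,W_n^k(\varepsilon))}=o\big(\Exp{N(H,W_n^k(\varepsilon))}^2\big)$ and Chebyshev gives the stated convergence in probability (after dividing by $n^{k(3-\tau)/2}$).

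For part~\eqref{lem:Afinite}, I would show the integral defining $A(H)$ converges by analyzing its behavior near the boundary of $[0,\infty)^k$. Splitting each coordinate into ``small'' ($x_i\leq 1$) and ``large'' ($x_i>1$) regimes, on any region the integrand is bounded by $(x_1\cdots x_k)^{-\tau}$ times edge factors $\frac{x_ix_j}{1+x_ix_j}\leq\min(x_ix_j,1)$ and non-edge factors $\frac{1}{1+x_ux_v}\leq\min(1,1/(x_ux_v))$. For a subset $T\subseteq[k]$ of coordinates sent to $\infty$, finiteness along that direction requires the total power of each $x_i$, $i\in T$, to be $<-1$; the power is $-\tau$ plus $+1$ for each $H$-neighbor in $[k]\setminus T$ (edge factor saturates) and $-1$ for each non-neighbor in $T$ and each non-neighbor in $[k]\setminus T$ where the non-edge factor decays — and summing these exponent conditions over $T$ reproduces exactly the statement that the partition $(S_1,S_2,S_3)=(\emptyset,T,[k]\setminus T)$ with $T$ at scale $n^{1/(\tau-1)}$ and the rest at $\sqrt{n}$ is not strictly better than $S_3=[k]$, which is guaranteed by the uniqueness hypothesis $B(H)=0$ with $S_3^*=[k]$. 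The symmetric analysis for coordinates sent to $0$ uses the edge factor $\frac{x_ix_j}{1+x_ix_j}\approx x_ix_j$ and corresponds to comparing with the partition putting those vertices in $S_1$. I expect the main obstacle to be this last step: carefully matching the boundary-convergence exponents of the integral with the uniqueness/optimality of the discrete optimization problem~\eqref{eq:maxeqsub} requires translating between the continuous integral's scaling and the combinatorial exponents $\abs{S_1},\abs{S_2},E_{S_1},E_{S_2},\dots$, and one must be attentive to mixed directions where some coordinates go to $0$ while others go to $\infty$ simultaneously. A clean way to organize this is to bound the integral by a finite sum of integrals, one per ``face'' of the extended region $[0,\infty]^k$, and show each is finite using the corresponding instance of the strict suboptimality of that face's associated partition.
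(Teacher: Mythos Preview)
Your plan is essentially the paper's proof: part~(i) via a first-moment computation (Lemma~\ref{lem:convsub}) plus a second-moment/Chebyshev argument (Lemma~\ref{lem:varsub}), and part~(ii) via the integral bound of Lemma~\ref{lem:S3intind} specialized to $|S_3^*|=k$, using exactly your $\min(x_ix_j,1)$ and $\min(1/(x_ix_j),1)$ bounds together with an orthant decomposition whose exponent conditions are read off from the strict suboptimality of the corresponding partitions.

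Two minor corrections. First, the non-edge factor is $L_n/(L_n+d_{v_u}d_{v_v})$, not $1/(L_n+d_{v_u}d_{v_v})$ (the displayed formula in Lemma~\ref{lem:psuburg} has a typo; compare~\eqref{eq:gn}), so after rescaling it becomes $\frac{1}{1+x_ux_v}$ with no extra $(\mu n)^{-(\binom{k}{2}-m)}$, and the exponent $\tfrac{k}{2}(3-\tau)$ drops out directly without any appeal to $B(H)=0$. Second, the overlap part of the variance is simpler than you sketch: the paper just bounds the joint probability by~$1$ and uses that the number of pairs of $k$-tuples in the $\sqrt{n}$-window sharing $s\geq 1$ vertices is $O(n^{(3-\tau)(2k-s)/2})=o(n^{(3-\tau)k})$; no reference to the uniqueness of the optimizer is needed there.
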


We now prove Theorem~\ref{thm:sqrtsub} using this lemma. 

\begin{proof}[Proof of Theorem~\ref{thm:sqrtsub}]
	We first study the expected number of induced subgraphs with vertices outside $W_n^k(\varepsilon)$ and show that their contribution to the total number of copies of $H$ is small. First, we investigate the expected number of copies of $H$ in the case where vertex 1 of the subgraph has degree smaller than $\varepsilon\sqrt{\mu n}$. 
	By Lemma~\ref{lem:psuburg}, the probability that $H$ is present on a specified subset of vertices $\boldsymbol{v}=(v_1,\ldots,v_k)$ satisfies
		\begin{align}
			\Prob{\URGnd|_{\boldsymbol{v}}= \Ecal_H} & =\Theta \Big( \prod_{\{i,j\}\in \Ecal_H}\frac{d_{v_i}d_{v_j}}{L_n+d_{v_i}d_{v_j}} \prod_{\{u,w\}\notin \Ecal_H}\frac{L_n}{L_n+d_{v_u}d_{v_w}}\Big)
		\end{align}

	Furthermore, by~\eqref{D-tail}, there exists $C_0$ such that $\Prob{D=k}\leq C_0k^{-\tau}$ for all $k$, where $D$ denotes the degree of a uniformly chosen vertex. Let $I(H,\boldsymbol{v})=\ind{\URGnd|_{\boldsymbol{v}}= \Ecal_H},$ so that $N(H)=\sum_{\boldsymbol{v}} I(H,\boldsymbol{v})$. 
	
	Define
	\begin{equation}\label{eq:gn}
		h_n(x_1,\dots,x_k)=\prod_{\mathclap{\{i,j\}\in \Ecal_H}} \ \frac{x_ix_j}{\mu n+x_ix_j} \ \ \prod_{\mathclap{\{s,t\}\notin \Ecal_H}}  \ \frac{\mu n}{\mu n+x_sx_t}.
	\end{equation}

We can use similar methods as in~\cite[Eq.~(4.4)]{gao2018} to show that for some $K^*>0$,
\begin{align}\label{eq:expdeg1small1}
	&\sum_{\boldsymbol{v}}\Exp{I(H,\boldsymbol{v})\ind{d_{v_1}<\varepsilon\sqrt{\mu n}}}\nonumber\\
	& = n^k\int_{1}^{\varepsilon\sqrt{\mu n}}\int_{1}^{\infty}\cdots\int_{1}^{\infty}h_n(x_1,x_2,\dots,x_n) \dd F_n(x_k)\dots \dd F_n(x_1) \nonumber\\
	& \leq n^k K^* \int_{1}^{\varepsilon\sqrt{\mu n}}\int_{1}^{\infty}\cdots\int_{1}^{\infty}(x_2\cdots x_k)^{-\tau} h_n(x_1,x_2,\dots,x_n) \dd x_k\dots \dd F(x_1) .
\end{align}

For all non-decreasing $g$ that are bounded on $[0,\varepsilon \sqrt{\mu n}]$ and once differentiable, where $\bar{G}(x)$ denotes a function such that $\int_0^x\bar{G}(y)\dd y=g(x)$
\begin{align}
	&\int_{0}^{\varepsilon\sqrt{\mu n}}g(x)\dd F_n(x)= \int_{0}^{\varepsilon\sqrt{\mu n}}\int_0^{x}\bar{G}(y)\dd y\dd F_n(x)\nonumber\\
	 &=  \int_{0}^{\varepsilon\sqrt{\mu n}}(F_n(\varepsilon\sqrt{\mu n})-F_n(y))\bar{G}(y)\dd y\nonumber\\
	&=   C\left(\int_{0}^{\varepsilon\sqrt{\mu n}}y^{1-\tau}\bar{G}(y)\dd y-\int_{0}^{\varepsilon\sqrt{\mu n}}(\varepsilon\sqrt{\mu n})^{1-\tau}\bar{G}(y)\dd y\right)(1+o(1))\nonumber\\
	& =C\left((\tau-1) \int_{0}^{\varepsilon\sqrt{\mu n}}y^{-\tau}g(y)\dd y +\big[cg(y)y^{1-\tau}\big]_0^{\varepsilon\sqrt{\mu n}}-(\varepsilon\sqrt{\mu n})^{1-\tau}g(\varepsilon\sqrt{\mu n})\right)(1+o(1))\nonumber\\
	& =C(\tau-1) \int_{0}^{\varepsilon\sqrt{\mu n}}y^{-\tau}g(y)\dd y +o((\varepsilon\sqrt{\mu n})^{1-\tau}g(\varepsilon\sqrt{\mu n})),
\end{align}
where we have used Assumption~\ref{ass:degrees}\ref{ass:degreerange}.
Taking 
\begin{equation}
	g(x)=g_n(x)= \int_{1}^{\infty}\cdots\int_{1}^{\infty}(x_2\cdots x_k)^{-\tau} h_n(x,x_2,\dots,x_n) \dd x_2\dots \dd x_k
\end{equation}
 yields for~\eqref{eq:expdeg1small1}
\begin{align}\label{eq:expdeg1small2}
	&\sum_{\boldsymbol{v}}\Exp{I(H,\boldsymbol{v})\ind{d_{v_1}<\varepsilon\sqrt{\mu n}}}\nonumber\\
	& \leq n^k K^* \int_{1}^{\varepsilon\sqrt{\mu n}}\int_{1}^{\infty}\cdots\int_{1}^{\infty}(x_1\cdots x_k)^{-\tau} h_n(x_1,x_2,\dots,x_n) \dd x_1\dots \dd x_k \nonumber \\
	& \quad + o\left(n^k (\varepsilon \sqrt{\mu n})^{1-\tau} \int_{1}^{\infty}\cdots\int_{1}^{\infty}(x_2\cdots x_k)^{-\tau} h_n(\varepsilon \sqrt{\mu n},x_2,\dots,x_n) \dd x_2\dots \dd x_k \right).
\end{align}

Now we can bound the first term of~\eqref{eq:expdeg1small2} as
	\begin{equation}
		\begin{aligned}[b]
			&n^k\int_{1}^{\varepsilon\sqrt{\mu n}}\int_{1}^{\infty}\cdots\int_{1}^{\infty}(x_1\cdots x_k)^{-\tau} \ \prod_{\mathclap{\{i,j\}\in \Ecal_H}} \ \frac{x_ix_j}{\mu n+x_ix_j} \ \ \prod_{\mathclap{\{u,w\}\notin \Ecal_H}}  \ \frac{\mu n}{\mu n+x_ux_w}\dd x_1\cdots \dd x_k\\
			&=n^k(\mu n)^{\frac{k}{2}(1-\tau)} \int_{0}^{\varepsilon}\int_{0}^{\infty}\cdots\int_{0}^{\infty}(t_1\cdots t_k)^{-\tau} \ \prod_{\mathclap{\{i,j\}\in \Ecal_H}} \ \frac{t_it_j}{1+t_it_j} \ \ \prod_{\mathclap{\{u,w\}\notin \Ecal_H}} \  \frac{1}{1+t_ut_w}\dd t_1\cdots \dd t_k\\
			& = \bigO{n^{\frac{k}{2}(3-\tau)}}h_1(\varepsilon),
		\end{aligned}
	\end{equation}
	where $h_1(\varepsilon)$ is a function of $\varepsilon$. By Lemma~\ref{lem:convNH}\ref{lem:Afinite}, $h_1(\varepsilon)\to 0$ as $\varepsilon\searrow 0$. 
	
	For the second term in~\eqref{eq:expdeg1small2}, we obtain
	\begin{align}
		&o\Big(n^k (\varepsilon \sqrt{\mu n})^{1-\tau} \int_{1}^{\infty}\cdots\int_{1}^{\infty}(x_2\cdots x_k)^{-\tau} g_n(\varepsilon \sqrt{\mu n},x_2,\dots,x_n) \dd x_2\dots \dd x_k \Big) \nonumber\\
		&= o\Big(n^k (\mu n)^{\frac{k}{2}(1-\tau)}\varepsilon^{1-\tau} \int_{0}^{\infty}\cdots\int_{0}^{\infty}(t_2\cdots t_k)^{-\tau} h(\varepsilon ,t_2,\dots,t_n) \dd t_2\dots \dd t_k\Big)\nonumber\\
		& = o\left(n^{\frac{k}{2}(3-\tau)}\right)h_2(\varepsilon),
	\end{align}
	where
		\begin{equation}
		h(t_1,\dots,t_k)=\prod_{\mathclap{\{i,j\}\in \Ecal_H}} \ \frac{t_it_j}{1+t_it_j} \ \ \prod_{\mathclap{\{u,w\}\notin \Ecal_H}}  \ \frac{1}{1+t_ut_w},
	\end{equation}
and $h_2(\varepsilon)$ is a function of $\varepsilon$.

	We can bound the situation where another vertex has degree smaller than $\varepsilon\sqrt{n}$, or where one of the vertices has degree larger than $\sqrt{n}/\varepsilon$, similarly. This yields
	\begin{equation}
		\Exp{N(H,\bar{W}_n^k(\varepsilon))} = \bigO{n^{\frac{k}{2}(3-\tau)}}h(\varepsilon) + o\Big(n^{\frac{k}{2}(3-\tau)}\Big)\tilde{h}(\varepsilon) ,
	\end{equation}
	for some function $h(\varepsilon)$ not depending on $n$ such that $h(\varepsilon)\to 0$ when $\varepsilon\searrow 0$ and some function $\tilde{h}(\varepsilon)$ not depending on $n$. By the Markov inequality, 
	\begin{equation}
		\begin{aligned}[b]
			N(H,\bar{W}_n^k(\varepsilon))=h(\varepsilon)\bigOp{n^{\frac{k}{2}(3-\tau)}}.
		\end{aligned}
	\end{equation}
	Thus, for any $\delta>0$,
	\begin{equation}
		\limsup_{\varepsilon\to 0}\limsup_{n\to\infty} \Prob{\frac{N(H,\bar{W}_n^k(\varepsilon))}{n^{k(3-\tau)/2}}>\delta }=0.
	\end{equation}
	Combining this with Lemma~\ref{lem:convNH}\ref{lem:convNH1} gives
	\begin{align}
		\frac{N(H)}{n^{\frac{k}{2}(3-\tau)}}\plim & c^k\mu^{-\frac{k}{2}(\tau-1)}\! \int_{0}^{\infty}\! \cdots\!  \int_{0}^{\infty}(x_1,\cdots x_k)^{-\tau}\prod_{\mathclap{\{i,j\}\in \Ecal_{H}}} \ \frac{x_ix_j}{1+x_ix_j} \nonumber\\
		& \quad \times \prod_{\mathclap{\{u,w\}\notin \Ecal_{H}}} \ \frac{1}{1+x_ux_w}		\dd x_1\cdots \dd x_k.
	\end{align}
\end{proof}


\subsection{Conditional expectation}
We will prove Lemma~\ref{lem:convNH} using a second moment method. Thus, we will first investigate the expected number of copies of induced subgraph $H$ in $\URGnd$, and then bound its variance.
Let $H$ be a subgraph on $k$ vertices, labeled as ${[k]}$, and $m$ edges, denoted by $e_1={\{i_1,j_1\},\ldots,e_m=\{i_m,j_m\}}$.


%

\begin{lemma}[Convergence of conditional expectation of $\sqrt{n}$ subgraphs]\label{lem:convsub}
	Let $H$ be a subgraph such that~\eqref{eq:maxeqsub} has a unique maximizer, and the maximum is attained at 0. Then,
	\begin{align}
	\frac{\Exp{N (H,W_n^k(\varepsilon))}}{n^{\frac{k}{2}(3-\tau)}}&  \to (C(\tau-1))^k\mu^{-\frac{k}{2}(\tau-1)}\int_{\varepsilon}^{1/\varepsilon}\!\!\cdots \int_{\varepsilon}^{1/\varepsilon}(x_1\cdots x_k)^{-\tau}\nonumber\\
	&  \times \prod_{\mathclap{\{i,j\}\in \Ecal_{H}}} \ \frac{x_ix_j}{1+x_ix_j} \ \ \ \ \prod_{\mathclap{\{u,v\}\notin \Ecal_{H}}} \ \frac{1}{1+x_ux_v}\dd x_1\cdots \dd x_k .
	\end{align}
\end{lemma}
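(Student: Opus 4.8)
The plan is to compute $\Exp{N(H,W_n^k(\varepsilon))}$ directly by writing it as a sum over $k$-tuples of distinct vertices with degrees in the $\sqrt{\mu n}$-window, using Lemma~\ref{lem:psuburg} to replace the subgraph probability by the explicit product over edges and non-edges, and then passing the resulting Riemann-type sum to an integral. Concretely, $\Exp{N(H,W_n^k(\varepsilon))}=\sum_{\boldsymbol v\in W_n^k(\varepsilon)}\Prob{\URGnd|_{\boldsymbol v}=\Ecal_H}$, where the sum runs over ordered tuples of distinct vertices. Since every vertex in the window has $d_{v_i}=\Theta(\sqrt n)\gg1$, Lemma~\ref{lem:psuburg} applies and gives $\Prob{\URGnd|_{\boldsymbol v}=\Ecal_H}=h_n(d_{v_1},\dots,d_{v_k})(1+o(1))$ with $h_n$ as in~\eqref{eq:gn}. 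Grouping vertices by degree and using that the number of vertices of degree exactly $j$ is encoded by $n\,\dd F_n(j)$, I would rewrite the sum (up to the negligible diagonal terms where two chosen vertices coincide, which contribute a lower order since they reduce the number of free vertices by one) as
\begin{equation}
\Exp{N(H,W_n^k(\varepsilon))} = n^k\int_{\varepsilon\sqrt{\mu n}}^{\sqrt{\mu n}/\varepsilon}\!\!\!\cdots\int_{\varepsilon\sqrt{\mu n}}^{\sqrt{\mu n}/\varepsilon} h_n(x_1,\dots,x_k)\,\dd F_n(x_1)\cdots\dd F_n(x_k)\,(1+o(1)).
\end{equation}

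Next I would perform the change of variables $x_i=t_i\sqrt{\mu n}$, which turns each factor $x_ix_j/(\mu n+x_ix_j)$ into $t_it_j/(1+t_it_j)$ and each non-edge factor $\mu n/(\mu n+x_ux_v)$ into $1/(1+t_ut_v)$, so that $h_n(x_1\sqrt{\mu n},\dots)=h(t_1,\dots,t_k)$ exactly, with $h$ the $n$-free kernel. For the measure, Assumption~\ref{ass:degrees}\ref{ass:degreerange} gives $1-F_n(j)=Cj^{1-\tau}(1+o(1))$ uniformly for $j=O(\sqrt n)$, hence on the window $[\varepsilon\sqrt{\mu n},\sqrt{\mu n}/\varepsilon]$ one has $\dd F_n(x)=C(\tau-1)x^{-\tau}\dd x\,(1+o(1))$ in the sense of weak convergence of measures (this is where the differentiated version of~\eqref{D-tail} used in the proof of Theorem~\ref{thm:sqrtsub} enters); substituting $x_i=t_i\sqrt{\mu n}$ multiplies each such factor by $C(\tau-1)(t_i\sqrt{\mu n})^{-\tau}\sqrt{\mu n}=C(\tau-1)(\mu n)^{(1-\tau)/2}t_i^{-\tau}$. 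Collecting the $k$ copies of this and the prefactor $n^k$ yields exactly $n^k(\mu n)^{\frac k2(1-\tau)}=n^{\frac k2(3-\tau)}\mu^{\frac k2(1-\tau)}$ times $(C(\tau-1))^k$ times the integral of $(t_1\cdots t_k)^{-\tau}h(t_1,\dots,t_k)$ over $[\varepsilon,1/\varepsilon]^k$, which is the claimed limit.

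To make the limit rigorous I would (a) check that the integrand $(t_1\cdots t_k)^{-\tau}\prod_{\{i,j\}\in\Ecal_H}\frac{t_it_j}{1+t_it_j}\prod_{\{u,v\}\notin\Ecal_H}\frac1{1+t_ut_v}$ is integrable over the compact box $[\varepsilon,1/\varepsilon]^k$ — which is immediate since it is continuous and bounded there, the only potential singularity $t_i\to0$ being excluded by $\varepsilon>0$; and (b) justify exchanging the limit $n\to\infty$ with the integral, using dominated convergence: on the fixed box the kernel $h(t_1,\dots,t_k)$ is bounded and the measures $\dd F_n$ restricted to $[\varepsilon\sqrt{\mu n},\sqrt{\mu n}/\varepsilon]$, after rescaling, converge to $C(\tau-1)t^{-\tau}\dd t$ on $[\varepsilon,1/\varepsilon]$ by Assumption~\ref{ass:degrees}\ref{ass:degreerange}, with a uniform bound from Assumption~\ref{ass:degreeall}. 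The hypotheses that~\eqref{eq:maxeqsub} is uniquely maximized at $S_3=[k]$ and $B(H)=0$ are not actually needed for the value of this conditional expectation — they are only used elsewhere to conclude that $W_n^k(\varepsilon)$ carries the dominant contribution — so the proof of this lemma is essentially the computation above together with the integrability/convergence bookkeeping.

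\textbf{Main obstacle.} The only genuinely delicate point is the weak-convergence step for the empirical degree measure: turning $\dd F_n$ into $C(\tau-1)x^{-\tau}\dd x$ with control of the error uniformly across the $\Theta(\sqrt n)$-wide window and then propagating the $(1+o(1))$ through a $k$-fold iterated integral without the errors compounding. This is handled by the same integration-by-parts identity already displayed in the proof of Theorem~\ref{thm:sqrtsub} (the computation of $\int_0^{\varepsilon\sqrt{\mu n}}g(x)\,\dd F_n(x)$), applied now on the two-sided window rather than the lower tail; one integrates out the variables one at a time, at each step using that the remaining kernel is a bounded, once-differentiable function of the integration variable on the window, so the error term is $o(1)$ relative to the main term and there are only $k$ such steps. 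A secondary, routine point is discarding the contribution of tuples $\boldsymbol v$ with a repeated coordinate, which is $O(n^{k-1})$ times a bounded probability and hence $o(n^{\frac k2(3-\tau)})$ since $\frac k2(3-\tau)>k-1$ fails in general — actually one should note $\frac k2(3-\tau)<k$ always, and the diagonal sum over $k-1$ free vertices scales like $n^{\frac{k-1}{2}(3-\tau)}\cdot(\text{at most }O(1))$, which is indeed of strictly smaller order than $n^{\frac k2(3-\tau)}$; this comparison is immediate.
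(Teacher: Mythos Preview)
Your proposal is correct and follows essentially the same route as the paper: write the expectation as a sum over tuples in $W_n^k(\varepsilon)$, apply Lemma~\ref{lem:psuburg} to obtain the product kernel, rescale degrees by $\sqrt{\mu n}$, and pass to the limit using that the rescaled empirical degree measure on $[\varepsilon,1/\varepsilon]$ converges weakly to $C(\tau-1)t^{-\tau}\dd t$ while the kernel is bounded and continuous on the compact box. The only cosmetic difference is that the paper packages the measure-convergence step into a single point-mass measure $M^{(n)}$ and cites \cite[Eq.~(4.19)]{gao2018} for its limit, whereas you propose to redo this via the integration-by-parts identity; both are valid and lead to the same computation.
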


\begin{proof} 
	We denote
	\begin{equation}
		h(d_1,\dots,d_k)=\prod_{{\{i,j\}\in \Ecal_{H}}}\frac{d_id_j}{L_n+d_id_j}\prod_{{\{u,v\}\notin \Ecal_{H}}}\frac{1}{L_n+d_ud_v}.
	\end{equation}
	As
	\begin{equation}
		\Exp{N (H,W_n^k(\varepsilon))}=\sum_{(v_1,\ldots,v_k)\in W_n^k(\varepsilon)}\Prob{\URGnd|_{\boldsymbol{v}}=\mathcal{E}_H},
	\end{equation}
	and $d_{v_i}\geq \varepsilon \sqrt{n}$ for $i\in[k]$, we get from Lemma~\ref{lem:psuburg}
	\begin{align}\label{eq:condex}
	&	\Exp{N (H,W_n^k(\varepsilon))}=\sum_{(v_1,\ldots,v_k)\in W_n^k(\varepsilon)}\prod_{\{i,j\}\in \Ecal_H}\frac{d_{v_i}d_{v_j}}{L_n+d_{v_i}d_{v_j}} \ \ \ \ \prod_{\mathclap{\{s,t\}\notin \Ecal_H}} \ \frac{1}{L_n+d_{v_s}d_{v_t}}(1+o(1))\nonumber\\
		& =(1+o(1))\sum_{1\leq i_1<i_2<\dots<i_k\leq n}h(d_{i_1},\dots,d_{i_k})\ind{i_1,i_2,\dots,i_k\in W_n^k(\varepsilon)}.
	\end{align}
		We then define the measure
	\begin{equation}
		\Mn([a,b])=\mu^{(\tau-1)/2}n^{(\tau-3)/2}\sum_{i\in[n]}\ind{d_i\in[a,b]\sqrt{\mu n}}.
	\end{equation}
	By~\cite[Eq. (4.19)]{gao2018}
	\begin{equation}\label{eq:measconv}
		\Mn([a,b])\to C(\tau-1)\int_{a}^{b}t^{-\tau}\dd t =:\lambda([a,b]).
	\end{equation}
	Then,
	\begin{align}
		& \frac{\sum_{1\leq i_1<i_2<\dots<i_k\leq n}h(d_{i_1},\dots,d_{i_k})\ind{i_1,i_2,\dots,i_k\in W_n^k(\varepsilon)}}{n^{\frac{k}{2}(3-\tau)}\mu^{-\frac{k}{2}(\tau-1)}}\nonumber\\
		& =\frac{1}{k!}\int_{\varepsilon}^{1/\varepsilon}\dots\int_{\varepsilon}^{1/\varepsilon} h(t_1,\dots,t_k)\dd \Mn(t_1)\dots\dd \Mn(t_k).
	\end{align}
	Because the function $h(t_1,\dots,t_k)$ is a bounded, continuous function on $[\varepsilon,1/\varepsilon]^k$, 
	\begin{equation}\label{eq:gconv}
		\begin{aligned}[b]
			& \frac{\sum_{1\leq i_1<i_2<\dots<i_k\leq n}h(d_1,\dots,d_k)\ind{i_1,i_2,\dots,i_k\in W_n^k(\varepsilon)}}{n^{\frac{k}{2}(3-\tau)}\mu^{-\frac{k}{2}(\tau-1)}}\\
			&  \to \frac{1}{k!}\int_{\varepsilon}^{1/\varepsilon}\dots \int_{\varepsilon}^{1/\varepsilon} h(t_1,\dots,t_k)\dd \lambda(t_1)\dots\dd \lambda(t_k)\\
			&=\frac{(C(\tau-1))^3}{k!}\int_{\varepsilon}^{1/\varepsilon}\dots\int_{\varepsilon}^{1/\varepsilon}(x_1\cdots x_k)^{-\tau}\prod_{\mathclap{\{i,j\}\in \Ecal_{H}}}\frac{x_ix_j}{1+x_ix_j} \ \ \ \ \prod_{\mathclap{\{u,v\}\notin \Ecal_{H}}}\frac{1}{1+x_ux_v}\dd x_1 \dots \dd x_k.
		\end{aligned}
	\end{equation}
	Combining this with~\eqref{eq:condex} proves the lemma.
\end{proof}

\subsection{Variance of the number of induced subgraphs}
We now study the variance of the number of induced subgraphs. The following lemma shows that the variance of the number of subgraphs is small compared to its expectation:
\begin{lemma}[Conditional variance for subgraphs]\label{lem:varsub}
	Let $H$ be a subgraph such that~\eqref{eq:maxeqsub} has a unique maximum attained at 0. Then, 
	\begin{equation}
	\frac{\Var{N (H,W_n^k(\varepsilon))}}{\Exp{N (H,W_n^k(\varepsilon))}^2}\to 0.
	\end{equation}
\end{lemma}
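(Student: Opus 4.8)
The plan is to prove $\Var{N (H,W_n^k(\varepsilon))}=o\big(\Exp{N (H,W_n^k(\varepsilon))}^2\big)$ by the standard second-moment expansion over pairs of $k$-vertex sets. Write $N(H,W_n^k(\varepsilon))=\sum_{\boldsymbol{v}}I(H,\boldsymbol{v})\ind{\boldsymbol{v}\in W_n^k(\varepsilon)}$, where the sum runs over ordered (or unordered, up to the fixed combinatorial factor) $k$-tuples of distinct vertices. Then
\begin{align*}
\Var{N (H,W_n^k(\varepsilon))}=\sum_{\boldsymbol{v},\boldsymbol{w}}\Big(\Prob{\URGnd|_{\boldsymbol{v}}=\Ecal_H,\URGnd|_{\boldsymbol{w}}=\Ecal_H}-\Prob{\URGnd|_{\boldsymbol{v}}=\Ecal_H}\Prob{\URGnd|_{\boldsymbol{w}}=\Ecal_H}\Big),
\end{align*}
where both $\boldsymbol{v},\boldsymbol{w}$ have all coordinates in $W_n^k(\varepsilon)$. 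I would split this sum according to the overlap size $\ell=|\{v_1,\dots,v_k\}\cap\{w_1,\dots,w_k\}|$. For $\ell=0$ the contribution is negligible: the two induced subgraphs involve disjoint vertex pairs, and the events are "almost independent" — more precisely, using the conditional connection-probability estimate from \cite{gao2020} underlying Lemma~\ref{lem:psuburg} (equations~\eqref{eq:puvurg}--\eqref{eq:puvnonurg} hold even after conditioning on $\mathcal{G}^+,\mathcal{G}^-$), one gets $\Prob{\URGnd|_{\boldsymbol{v}}=\Ecal_H,\URGnd|_{\boldsymbol{w}}=\Ecal_H}=\Prob{\URGnd|_{\boldsymbol{v}}=\Ecal_H}\Prob{\URGnd|_{\boldsymbol{w}}=\Ecal_H}(1+o(1))$ uniformly over $\boldsymbol{v},\boldsymbol{w}\in W_n^k(\varepsilon)$, so the $\ell=0$ block of the variance is $o\big(\Exp{N}^2\big)$.

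The main work is the overlapping terms $\ell\in\{1,\dots,k-1\}$ (the term $\ell=k$, i.e.\ $\boldsymbol{v}=\boldsymbol{w}$ up to relabelling, just contributes $\Exp{N}$, which is $o(\Exp{N}^2)$ since $\Exp{N}=\thetap{n^{k(3-\tau)/2}}\to\infty$ as $k\ge3$ and $\tau<3$). For a fixed overlap pattern, the union $\boldsymbol{v}\cup\boldsymbol{w}$ has $2k-\ell$ vertices, all with degree $\Theta(\sqrt{\mu n})$, and the two copies of $H$ glued along the overlap form a connected graph $H'$ on $2k-\ell$ vertices with $2m-m_{\mathrm{int}}$ edges, where $m_{\mathrm{int}}$ is the number of edges of $H$ internal to the overlapping part (counted once). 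Summing $\Prob{\URGnd|_{\boldsymbol{v}}=\Ecal_H,\URGnd|_{\boldsymbol{w}}=\Ecal_H}$ over such configurations is, by the same computation as in Lemma~\ref{lem:convsub}/Lemma~\ref{lem:convNH}\eqref{lem:convNH1} applied to $H'$ (all degrees $\sqrt{n}$, so Lemma~\ref{lem:psuburg} applies and the measure convergence~\eqref{eq:measconv} gives a finite limit integral), of order $n^{(2k-\ell)(3-\tau)/2}$ times a finite constant depending on $\varepsilon$. Since $\Exp{N}^2=\thetap{n^{k(3-\tau)}}$ and $(2k-\ell)(3-\tau)/2<k(3-\tau)$ strictly for every $\ell\ge1$ (because $3-\tau>0$), each overlapping block is $o\big(\Exp{N}^2\big)$; finiteness of the limiting constant is exactly Lemma~\ref{lem:convNH}\eqref{lem:Afinite} applied to $H'$, and only finitely many patterns $\ell$ occur, so summing them preserves the bound. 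For the covariance (not just second moment) one checks the subtracted product term is of strictly smaller order in the $\ell\ge1$ blocks, which is immediate from the same exponent count.

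The step I expect to be the real obstacle is making the $\ell=0$ "asymptotic independence" rigorous: one has to verify that conditioning on the full edge/non-edge configuration of $H$ on $\boldsymbol{v}$ does not distort the connection probabilities on the disjoint pair set $\boldsymbol{w}$ beyond a $1+o(1)$ factor, uniformly over all $\boldsymbol{v},\boldsymbol{w}\in W_n^k(\varepsilon)$. This requires invoking \cite[Corollary~2]{gao2020} with the conditioning event being a bounded number of prescribed edges/non-edges (whose endpoints have degree $\Theta(\sqrt{\mu n})$, so that removing them changes no relevant degree by more than a $1+o(1)$ factor, exactly as in the proof of Lemma~\ref{lem:psuburg}), and checking the error terms are uniform — but this is essentially bookkeeping identical to what already appears in Lemma~\ref{lem:psuburg}, so I would phrase it by saying that the estimates~\eqref{eq:puvurg}--\eqref{eq:puvnonurg} remain valid when $\mathcal{G}^+,\mathcal{G}^-$ additionally records the status of the (constantly many) pairs inside $\boldsymbol{v}$, since all involved vertices have $d\gg1$. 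A secondary bookkeeping point is that $W_n^k(\varepsilon)$ restricts to degrees in a compact range, which is what keeps all the limiting integrals over $[\varepsilon,1/\varepsilon]^{2k-\ell}$ finite and lets us apply weak convergence of $\Mn$ to a continuous bounded integrand, exactly as in~\eqref{eq:gconv}.
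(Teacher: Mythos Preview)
Your proposal is correct and follows essentially the same approach as the paper: expand the variance over pairs $(\boldsymbol{v},\boldsymbol{u})$, use Lemma~\ref{lem:psuburg} (with the conditioning extended to the edges/non-edges on $\boldsymbol{v}$) to get asymptotic independence when $\boldsymbol{v}\cap\boldsymbol{u}=\varnothing$, and use the vertex count $O(n^{(3-\tau)(2k-\ell)/2})$ for overlap size $\ell\ge1$.

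One simplification worth noting: for the overlapping terms the paper does not invoke any subgraph machinery on the glued graph $H'$ at all --- it simply bounds $\Prob{G|_{\boldsymbol{v}}=\Ecal_H,G|_{\boldsymbol{u}}=\Ecal_H}\le 1$ and uses that the number of $(2k-\ell)$-tuples in $W_n^k(\varepsilon)$ is $O(n^{(3-\tau)(2k-\ell)/2})$. Your appeal to Lemma~\ref{lem:convNH}\ref{lem:Afinite} for $H'$ is unnecessary and in fact not justified as stated, since that lemma requires the optimization problem~\eqref{eq:maxeqsub} for $H'$ to be uniquely maximized at $S_3=V_{H'}$, which need not hold for an arbitrary overlap pattern. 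You do not need it: as you yourself observe, the integrand is continuous and bounded on the compact domain $[\varepsilon,1/\varepsilon]^{2k-\ell}$, so the limiting constant is trivially finite.
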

\begin{proof}
	By Lemma~\ref{lem:convsub}, 
	\begin{equation}
	\Exp{N (H,W_n^k(\varepsilon))}^2=\Theta(n^{(3-\tau)k}),
	\end{equation}
	Thus, we need to prove that the variance is small compared to $n^{(3-\tau)k}$. Denote $\boldsymbol{v}=(v_1,\ldots,v_k)$ and ${\boldsymbol{u}}=(u_1,\ldots,u_k)$ and, for ease of notation, we denote $G=\URGnd$. 
	We write the variance as
	\begin{align}\label{eq:varmotif}
	\Var{N (H,W_n^k(\varepsilon))}&= \sum_{\boldsymbol{v}\in W_n^k(\varepsilon)}\sum_{\boldsymbol{u}\in W_n^k(\varepsilon)}
	\Big(\Prob{G|_{\boldsymbol{v}}= \Ecal_H,G|_{\boldsymbol{u}}= \Ecal_H}\nonumber\\
	& \quad \quad -\Prob{G|_{\boldsymbol{v}}=\Ecal_H}\Probn{G|_{\boldsymbol{u}}= \Ecal_H}\Big).
	\end{align}
	This splits into various cases, depending on the overlap of $\boldsymbol{v}$ and $\boldsymbol{u}$. When $\boldsymbol{v}$ and $\boldsymbol{u}$ do not overlap, 
	\begin{equation}\label{eq:varbound}
	\begin{aligned}[b]
	&\sum_{\boldsymbol{v}\in W_n^k(\varepsilon)}\sum_{\boldsymbol{u}\in W_n^k(\varepsilon)}\big(\Prob{G|_{\boldsymbol{v}}= \Ecal_H,G|_{\boldsymbol{u}}= \Ecal_H} -\Prob{G|_{\boldsymbol{v}}= \Ecal_H}\Prob{G|_{\boldsymbol{u}}= \Ecal_H}\big)\\
	& = \sum_{\boldsymbol{v}\in W_n^k(\varepsilon)}\sum_{\boldsymbol{u}\in W_n^k(\varepsilon)}\big(\Prob{G|_{\boldsymbol{v}}= \Ecal_H}\Prob{G|_{\boldsymbol{u}}=\Ecal_H}(1+o(1)) \nonumber\\
	& \quad\quad\quad -\Prob{G|_{\boldsymbol{v}}= \Ecal_H}\Prob{G|_{\boldsymbol{u}}= \Ecal_H}\big)\\
	& = \Exp{N (H,W_n^k(\varepsilon))}^2o(1),
	\end{aligned}
	\end{equation}
by Lemma~\ref{lem:psuburg}.
The other contributions are when $\boldsymbol{v}$ and $\boldsymbol{u}$ overlap. In this situation, we bound the probability that induced subgraph $H$ is present on a specified set of vertices by 1.
When $\boldsymbol{v}$ and $\boldsymbol{u}$ overlap on $s\geq 1$ vertices, we bound the contribution to~\eqref{eq:varmotif} as
	\begin{equation}\label{eq:varsqrt}
	\begin{aligned}[b]
	\sum_{\mathclap{\boldsymbol{v},\boldsymbol{u}\in W_n^k(\varepsilon)\colon \abs{\boldsymbol{v}\cup\boldsymbol{u}}=2k-s}} \ \ \Prob{G|_{\boldsymbol{v}}=\Ecal_H,G|_{\boldsymbol{u}}= \Ecal_H}& \leq\abs{ \{i\colon d_i\in \sqrt{\mu n}[\varepsilon,1/\varepsilon]\}}^{2k-s}\\
	& =\bigO{n^\frac{(3-\tau)(2k-s)}{2}},
	\end{aligned}
	\end{equation}
by Assumption~\ref{ass:degreeall}. This is $o(n^{(3-\tau)k})$ for $\tau\in(2,3)$, as required. 
\end{proof}

\begin{proof}[Proof of Lemma~\ref{lem:convNH}]
	We start by proving part (i). By Lemma~\ref{lem:varsub} and Chebyshev's inequality, 
	\begin{equation}
		N (H,W_n^k(\varepsilon))=\Exp{N (H,W_n^k(\varepsilon)}(1+\op(1)).
	\end{equation}
	Combining this with Lemma~\ref{lem:convsub} proves Lemma~\ref{lem:convNH}(i). 
	Lemma~\ref{lem:convNH}(ii) is follows from Lemma~\ref{lem:S3intind} in the next section, when $|S_3^*|=k$. 
\end{proof}

\section{Major contribution to general subgraphs: proof of Theorem~\ref{thm:motifs}(i)}
\label{sec:proofsec2}
We first introduce some further notation. As before, we denote the degree of a vertex $i$ inside its subgraph $H$ by $d^{\sss{(H)}}_{i}$. Furthermore, for any $W\subseteq V_H$, we denote by $d^{\sss{(H)}}_{i,W}$ the number of edges from vertex $i$ to vertices in $W$.
Let $H$ be a connected subgraph, such that the optimum of~\eqref{eq:maxeqsub} is unique, and let ${\mathcal{P}}=(S_1^*,S_2^*,S_3^*)$ be the optimal partition. Define
	\begin{equation}\label{eq:zeta}
	\zeta_i=
	\begin{cases}
	1 & \text{if }d^{\sss{(H)}}_i=1,\\
	d^{\sss{(H)}}_{i,S_1^*}+d^{\sss{(H)}}_{i,S_3^*}+d^{\sss{(H)}}_{i,V_1} & \text{if }i\in S_1^*,\\
	d^{\sss{(H)}}_{i,V_1} +d^{\sss{(H)}}_{i,S_1^*}+d^{\sss{(H)}}_{i,S_2^*}-|S_2^*|-|S_3^*|+1& \text{if }i\in S_2^*,\\
	d^{\sss{(H)}}_{i,S_1^*}+d^{\sss{(H)}}_{i,V_1} +d^{\sss{(H)}}_{i,S_2^*}-|S_2^*|& \text{if }i\in S_3^*.
	\end{cases}
	\end{equation}

We now provide two lemmas that show that two integrals related to the solution of the optimization problem~\eqref{eq:maxeqsub} are finite. These integrals are the key ingredient in proving Theorem~\ref{thm:motifs}(i).
\begin{lemma}[{Induced subgraph integrals over $S_3^*$}]
	\label{lem:S3intind} 
	Suppose that the maximum in~\eqref{eq:maxeqalphaind} is uniquely attained by ${\mathcal{P}}=(S_1^*,S_2^*,S_3^*)$ with $|S_3^*|=s>0$, and say that $S_3^*=[s]$.
	Then
	\begin{equation}
		\label{eq:S3intind}
		\int_{0}^{\infty}\cdots \int_{0}^\infty \prod_{i \in [s]}x_i^{-\tau+\zeta_i}\prod_{\mathclap{\{i,j\}\in \Ecal_{S_3^*}}}\frac{x_ix_j}{1+x_ix_j} \ \ \prod_{\mathclap{\{u,w\}\notin \Ecal_{S_3^*}}}\frac{1}{1+x_ux_w}\dd x_s\cdots\dd x_1<\infty.
	\end{equation} 
\end{lemma}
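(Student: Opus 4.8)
\textbf{Proof proposal for Lemma~\ref{lem:S3intind}.}

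The plan is to show that the integrand in~\eqref{eq:S3intind} is integrable by carefully analyzing its behavior both near the origin (where each $x_i\to 0$) and near infinity (where each $x_i\to\infty$), and on mixed regions where some coordinates are small and others large. The key observation is that the exponents $\zeta_i$ defined in~\eqref{eq:zeta} are exactly the quantities dictated by optimality of $\mathcal{P}=(S_1^*,S_2^*,S_3^*)$ in~\eqref{eq:maxeqalphaind}, so that the uniqueness of the maximizer will translate into \emph{strict} inequalities in the integrability conditions. First I would partition the domain $(0,\infty)^s$ according to which subset $W\subseteq [s]=S_3^*$ of coordinates is ``large'' (say $x_i>1$ for $i\in W$) and which is ``small'' ($x_i\le 1$ for $i\notin W$). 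On such a region, for $i\in W$ and $j\notin W$ the edge factor $\tfrac{x_ix_j}{1+x_ix_j}$ and non-edge factor $\tfrac{1}{1+x_ix_j}$ are both $\Theta(1)$ up to powers of the small variable, while factors among two large variables or two small variables contribute genuine powers. Collecting exponents, the integral over the ``large'' block $W$ converges iff the net exponent of each $x_i$, $i\in W$, is strictly less than $-1$, and over the ``small'' block it converges iff the net exponent there is strictly greater than $-1$.

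The main work is to check these exponent inequalities. For a subset $W\subseteq S_3^*$ viewed as being ``promoted'' to a larger order of magnitude (from $\sqrt n$ toward $n^{1/(\tau-1)}$, i.e.\ moving those vertices from $S_3^*$ into $S_2^*$), the total exponent picked up is precisely the difference in the objective~\eqref{eq:maxeqalphaind} between the partition $\mathcal{P}$ and the modified partition where $W$ is moved from $S_3^*$ to $S_2^*$; similarly the ``small'' block corresponds to demoting $W$ toward $n^{(\tau-2)/(\tau-1)}$, i.e.\ moving $W$ from $S_3^*$ into $S_1^*$. Since $\mathcal{P}$ is the \emph{unique} maximizer of~\eqref{eq:maxeqalphaind}, any such modification strictly decreases the objective, which is exactly what forces the strict inequalities needed for convergence. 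Concretely, I would verify that the linearized change in the objective as one scales $x_i = n^{\beta_i/2}\cdot(\text{order }\sqrt n)$ for $i\in W$ matches, after the change of variables used in the proof of Theorem~\ref{thm:motifs}(ii) (cf.~\eqref{eq:Nalph}--\eqref{eq:maxeqalphaind}), the exponent of $x_i$ in the integrand; the bookkeeping here is where the formula for $\zeta_i$ in the $S_3^*$ case, $\zeta_i = d^{(H)}_{i,S_1^*}+d^{(H)}_{i,V_1}+d^{(H)}_{i,S_2^*}-|S_2^*|$, gets used, since it records exactly which edges/non-edges contribute a power of $x_i$.

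I expect the bookkeeping of exponents on the mixed regions to be the main obstacle: one must handle every subset $W$ simultaneously and make sure that the ``boundary'' exponent $-1$ is never attained, which is precisely where uniqueness of the optimizer is essential (a non-unique optimizer would give a logarithmically divergent integral). A clean way to organize this is to bound the integrand on the region indexed by $W$ by a product $\prod_{i\in W} x_i^{-a_i}\prod_{i\notin W}x_i^{-b_i}$ with $a_i>1$ and $b_i<1$, integrate the large variables over $(1,\infty)$ and the small ones over $(0,1)$, and sum the finitely many resulting finite contributions. One subtlety to be careful about is that promoting or demoting a subset $W$ can turn some edge of $H$ into an ``active'' factor or deactivate it (depending on whether $\alpha_i+\alpha_j$ crosses $1$), so the comparison with~\eqref{eq:maxeqalphaind} must account for the indicator conditions $\{\alpha_i+\alpha_j<1\}$ and $\{\alpha_u+\alpha_v>1\}$ there; tracking these correctly is the crux. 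Once the exponent inequalities are in hand, the convergence of~\eqref{eq:S3intind} follows immediately by Tonelli's theorem since the integrand is nonnegative.
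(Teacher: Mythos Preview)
Your high-level strategy---split $(0,\infty)^s$ according to which coordinates lie in $(1,\infty)$ versus $(0,1]$, and then use unique optimality of $\mathcal{P}$ in~\eqref{eq:maxeqalphaind} to force strict exponent inequalities---is the same as the paper's. The gap is in how you handle the cross-terms. You assert that for $i\in W$ large and $j\notin W$ small the edge and non-edge factors are ``$\Theta(1)$ up to powers of the small variable'', and that on each region the integrand is bounded by a pure product $\prod_{i\in W}x_i^{-a_i}\prod_{i\notin W}x_i^{-b_i}$ with fixed exponents. This fails: with $x_i>1$ and $x_j\le 1$ the factor $\tfrac{x_ix_j}{1+x_ix_j}$ behaves like $x_ix_j$ when $x_ix_j<1$ and like $1$ when $x_ix_j>1$, so no single power of $x_j$ (or of $x_i$) bounds it uniformly in the other variable. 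The large and small blocks are genuinely coupled and the integral does not factor in the way you describe.

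The paper resolves this by a finer decomposition. After bounding by $\min(x_ix_j,1)$ and $\min(1/(x_ix_j),1)$, it \emph{orders} the large variables $x_1<\cdots<x_t$ and, for each small variable $x_i$, splits $\int_0^1$ at the thresholds $1/x_t,1/x_{t-1},\ldots,1/x_1$, producing an explicit piecewise function $\tilde h(i,\boldsymbol{x})$ of the large variables whose dominant piece $h_{p_i^*}$ is identified. The resulting exponent condition for the outermost variable $x_t$ is then checked by comparing $\mathcal{P}$ not with the partition obtained by moving all of $W$ to $S_2^*$ or all of $\bar W$ to $S_1^*$, as you propose, but with a \emph{mixed} perturbation: vertex $t$ is moved to $S_2^*$ while a particular subset $W_t\subseteq\bar U$ of small-variable indices (those with $p_i^*=t$) is moved to $S_1^*$ simultaneously. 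Because the effective exponent of $x_t$ depends on which small variables have $p_i^*=t$, the wholesale comparison you sketch does not yield the needed inequality; one must match the perturbation to this finer structure and then iterate from $x_t$ down to $x_1$.
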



\begin{lemma}[{Induced subgraph integrals over $S_1^*\cup S_2^*$}]
	\label{lem:S1S2intind}
	Suppose the optimal solution to~\eqref{eq:maxeqalphaind} is unique, and attained by ${\mathcal{P}}=(S_1^*,S_2^*,S_3^*)$. Say that $S_2^*=[t_2]$ and $S_1^*=[t_2+t_1]\setminus [t_2]$. Then, for every $a>0$,
	\begin{equation}\label{eq:S1S2intind}
		\begin{aligned}[b]
			\int_{0}^{a}\cdots \int_0^a\int_0^\infty\cdots\int_0^\infty& \prod_{\mathclap{j\in[t_1+t_2]}}x_j^{-\tau+\zeta_j} \ \prod_{\mathclap{\{i,j\}\in \Ecal_{S_1^*,S_2^*}}}\frac{x_ix_j}{1+x_ix_j} \\
			&  \times \prod_{\mathclap{\{i,j\}\notin \Ecal_{S_1^*,S_2^*}}}\frac{1}{1+x_ix_j}\dd x_{t_1+t_2}\cdots \dd x_1<\infty .
		\end{aligned}
	\end{equation}
\end{lemma}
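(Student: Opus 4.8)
The plan is to dominate the integrand of~\eqref{eq:S1S2intind} by a finite sum of monomials — one for each ordering of the magnitudes of the integration variables — and then to read off the convergence of every monomial integral from the fact that $(S_1^*,S_2^*,S_3^*)$ is the \emph{unique}, hence strict, maximizer of~\eqref{eq:maxeqalphaind}.

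First I would decompose the domain of integration. The variables fall into two groups according to~\eqref{eq:S1S2intind}: those with bounded range $[0,a]$ and those ranging over $[0,\infty)$. I would partition the domain into the finitely many pieces obtained by fixing a total order of the variables by magnitude and, simultaneously, recording for each pair $\{i,j\}$ whether $x_ix_j\le 1$ or $x_ix_j>1$. Using $\tfrac{x_ix_j}{1+x_ix_j}\le\min(x_ix_j,1)$ and $\tfrac{1}{1+x_ix_j}\le\min\bigl(1,(x_ix_j)^{-1}\bigr)$, on each such piece every edge factor equals $x_ix_j$ or $1$ and every non-edge factor equals $1$ or $(x_ix_j)^{-1}$, so the integrand is bounded by a single monomial $\prod_{j}x_j^{q_j}$ with integer exponents $q_j$ depending only on $\tau$ (through $\zeta_j$), on the adjacency structure of $H$ inside $S_1^*\cup S_2^*$, and on the combinatorial type of the piece.

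Next I would integrate each piece one variable at a time, from the smallest to the largest. For the bounded variables only the lower endpoint $0$ matters, so the one-dimensional integral is finite precisely when the running exponent there is $>-1$; for the half-line variables one needs in addition that the running exponent be $<-1$ at $+\infty$. These are finitely many linear inequalities in $\tau$ and the degree data. The crucial observation is that, by the very definition~\eqref{eq:zeta}, the sign conditions on these running exponents are governed by the derivatives of the objective of~\eqref{eq:maxeqalphaind} with respect to the scale exponents $\alpha_j$ at the optimum (up to the standard $\log n$ Jacobian relating $\alpha_j$ to $\log x_j$): $-\tau+\zeta_j$ measures the marginal effect of moving $\alpha_j$ off its optimal value $\tfrac{\tau-2}{\tau-1}$ (for $j\in S_1^*$) or $\tfrac{1}{\tau-1}$ (for $j\in S_2^*$). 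Consequently, each inequality "after integrating out a prefix $W$ of variables the running exponent has the correct sign" translates into "pushing the vertices of $W$ to a neighbouring scale strictly decreases the value of~\eqref{eq:maxeqalphaind}", which holds by Lemma~\ref{lem:maxmotif} exactly because the maximizer is unique. Summing the finitely many finite contributions then yields~\eqref{eq:S1S2intind}. Borderline pairs with $\alpha_i+\alpha_j=1$ contribute a factor $\tfrac{x_ix_j}{1+x_ix_j}$ or $\tfrac{1}{1+x_ix_j}$ with neither numerator nor denominator dominating; after the previous reductions the resulting one-dimensional integral is still convergent, so these cause no trouble.

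The main obstacle is the bookkeeping in the last step: one must verify that every boundary stratum of the integral — every choice of ordering, sign pattern and prefix $W$ — corresponds to an \emph{admissible} perturbation of the partition in the sense of Lemma~\ref{lem:maxmotif}, so that strict optimality really supplies the needed inequality, and conversely that no stratum demands more than optimality provides. Keeping track of the role of the degree-one vertices $V_1$ (which enter $\zeta_j$ but are not themselves integration variables) and of the edges between $S_1^*\cup S_2^*$ and $S_3^*$ (likewise absorbed into $\zeta_j$ rather than appearing as factors) is where care is required. The argument runs in parallel with the proof of the companion Lemma~\ref{lem:S3intind} for the $S_3^*$-coordinates and with the analogous estimate in~\cite{hofstad2017d}.
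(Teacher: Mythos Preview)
Your proposal is correct and follows essentially the same route as the paper: bound the edge and non-edge factors by $\min(x_ix_j,1)$ and $\min\bigl(1,(x_ix_j)^{-1}\bigr)$, decompose the domain by orderings and sign patterns, integrate iteratively, and derive the required exponent inequalities from the strict optimality of $(S_1^*,S_2^*,S_3^*)$. The paper's own proof is even terser---it simply says the argument parallels that of Lemma~\ref{lem:S3intind} and~\cite[Lemma~7.3]{hofstad2017d}---so your sketch in fact contains more detail than the paper provides; the only minor inaccuracy is that the needed inequalities come not from Lemma~\ref{lem:maxmotif} per se but from direct comparison of $(S_1^*,S_2^*,S_3^*)$ with perturbed partitions (as in Lemma~\ref{lem:dmotif} and the proof of Lemma~\ref{lem:S3intind}), which is precisely what you describe as ``pushing the vertices of $W$ to a neighbouring scale''.
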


The proofs of Lemma~\ref{lem:S3intind} and~\ref{lem:S1S2intind} are similar to the proofs of~\cite[Lemmas 7.2 and 7.3]{hofstad2017d} and are therefore deferred to Appendix~\ref{app:lemmas}. 

\begin{proof}[Proof of Theorem~\textup{\ref{thm:motifs}(i)}]
Note that $d_{\max}\leq M n^{1/(\tau-1)})$ by Assumption~\ref{ass:degreeall}. Define 
\begin{equation}
	\gamma_i^u(n)=\begin{cases}
		Mn^{1/(\tau-1)}& \text{if }i\in S_2^*,\\
		n^{\alpha_i}/\varepsilon_n & \text{else,}
	\end{cases}
\end{equation}
with $\alpha_i$ as in~\eqref{eq:alphasub}, and denote
\begin{equation}
	\gamma_i^l(n)=\begin{cases}
		1& \text{if }i\in V_1,\\
		\varepsilon_n n^{\alpha_i}& \text{else.}
	\end{cases}
\end{equation}
We then show that the expected number of subgraphs where the degree of at least one vertex $i$ satisfies $d_i\notin[\gamma_i^l(n),\gamma_i^u(n)]$ is small, similarly to the proof of Theorem~\ref{thm:sqrtsub} in Section \ref{sec:proof2}.

We first study the expected number of copies of $H$ where the first vertex has degree $d_{v_1}\in[1,\gamma_1^l(n))$ and all other vertices satisfy $d_{v_i}\in[\gamma_i^l(n),\gamma_i^u(n)]$, by integrating the probability that induced subgraph $H$ is formed over the range where vertex $v_1$ has degree $d_{v_1}\in[1,\gamma_1^l(n))$ and all other vertices satisfy $d_{v_i}\in[\gamma_i^l(n),\gamma_i^u(n)]$. Using Lemma~\ref{lem:psuburg}, and that the degree distribution can be bounded as $\Prob{D=k}\leq M_2k^{-\tau}$ for some $M_2>0$ by Assumption~\ref{ass:degreeall}, we bound the expected number of such copies of $H$ by
\begin{equation}
	\label{eq:Exp1small}
	\begin{aligned}[b]
		&\sum_{\boldsymbol{v}}\Exp{I(H, \boldsymbol{v})\ind{d_{v_1}<\gamma^l_1(n),d_{v_i}\in [\gamma_i^l(n),\gamma_i^u(n)] \ \forall i>1}}\\
		& \leq Kn^k\int_{1}^{\gamma_1^l(n)}\int_{\gamma_2^l(n)}^{\gamma_2^u(n)}\cdots \int_{\gamma_k^l(n)}^{\gamma_k^u(n)}(x_1\cdots x_k)^{-\tau}
		\prod_{\mathclap{\{i,j\}\in \Ecal_H}} \ \ \frac{x_ix_j}{L_n+x_ix_j} \ \ \ \prod_{\mathclap{\{u,w\}\notin \Ecal_H}} \ \frac{L_n}{L_n+x_ux_w}\dd x_k\cdots\dd x_1,
	\end{aligned}
\end{equation}
for some $K>0$, and where we recall that $I(H, \boldsymbol{v})=\ind{\URGnd|_{\boldsymbol{v}}=\Ecal_H}$.  This integral equals zero when vertex 1 is in $V_1$, since then $[1,\gamma_1^l(n))=\varnothing$. 
Suppose {that} vertex 1 is in $S_2^*$. W.l.o.g.\ assume that $S_2^*={[t_2]}$, $S_1^*={[t_1+t_2]\setminus [t_2]}$ and $S_3^*={[t_1+t_2+t_3]\setminus [t_1+t_2]}$. 
We bound $x_ix_j/(L_n+x_ix_j)$ by 
\begin{itemize}
	\item[(a)] $x_ix_j/L_n$ for $i,j\in S_1^*$;
	\item[(b)] $x_ix_j/L_n$ for $i$ or $j$ in $V_1$; 
	\item[(c)] $x_ix_j/L_n$ for $i\in S_1^*$, $j\in S_3^*$ or vice versa; and
	\item[(d)] 1 for $i,j\in S_2^*$ and $i\in S_2^*$, $j\in S_3^*$ or vice versa.
\end{itemize}
Similarly, we bound $L_n/(L_n+x_ix_j)$ by 
\begin{itemize}
	\item[(a)] 1 for $i,j\in S_1^*$;
	\item[(b)] 1 for $i$ or $j$ in $V_1$; 
	\item[(c)] 1 for $i\in S_1^*$, $j\in S_3^*$ or vice versa; and
	\item[(d)] $L_n/(x_ix_j)$ for $i,j\in S_2^*$ and $i\in S_2^*$, $j\in S_3^*$ or vice versa.
\end{itemize}

Combining these bounds with the change of variables $y_i=x_i/n^{\alpha_i}$ yields for~\eqref{eq:Exp1small}, for some $\tilde{K}>0$,
in the bound

	\begin{align}\label{eq:expnhsmall}
		&{\sum_{\boldsymbol{v}}\Exp{I(H, \boldsymbol{v})\ind{d_{v_1}<\gamma^l_1(n),d_{v_i}\in [\gamma_i^l(n),\gamma_i^u(n)] \ \forall i>1}}}\nonumber\\
		& \leq \tilde{K}n^k n^{|S_1^*|(2-\tau)+|S_3^*|(1-\tau)/2-|S_2^*|}n^{\frac{\tau-3}{\tau-1}E_{S_1^*}+\frac{\tau-3}{2(\tau-1)}E_{S_1^*,S_3^*}-\frac{1}{\tau-1}E_{S_1^*,V_1}-\frac{1}{2}E_{S_3^*,V_1}-\frac{\tau-2}{\tau-1}E_{S_2^*,V_1}}\nonumber\\
		& \quad \times n^{\left(\frac 12 |S_2|(|S_2|-1)-E_{S_2}\right)\frac{\tau-3}{\tau-1}+(|S_2||S_3|-E_{S_2,S_3})\frac{\tau-3}{2(\tau-1)}}\nonumber\\
		& \quad \times  \int_{0}^{\varepsilon_n}\int_{0}^{M}\cdots\int_{0}^{M}\int_{0}^{\infty}\cdots \int_{0}^{\infty}\prod_{i\in V_H\setminus V_1}y_i^{-\tau+\zeta_i}\prod_{{\{i,j\}\in \Ecal_{S_3^*}\cup \Ecal_{S_1^*,S_2^*}}}\frac{y_iy_j}{y_iy_j+1}\nonumber\\
		& \quad  \times\prod_{{\{u,w\}\notin \Ecal_{S_3^*}\cup \Ecal_{S_1^*,S_2^*}}}\frac{1}{y_uy_w+1}\dd y_{t_1+t_2+t_3}\cdots \dd y_{1} \prod_{j \in V_1}\int_{1}^{\infty} y_j^{1-\tau}\dd y_j,
	\end{align}
where the integrals from 0 to $M$ correspond to vertices in $S_2^*$ and the integrals from 0 to $\infty$ to vertices in $S_1^*$ and $S_3^*$. Since $\tau\in(2,3)$, the integrals corresponding to vertices in $V_1$ are finite. By the analysis from~\eqref{eq:maxtemp} to~\eqref{eq:maxcontrscaling}, 
\begin{align}\label{eq:StoB}
	&|S_1^*|(2-\tau)+|S_3^*|(1-\tau)/2-|S_2^*|+k+\frac{\tau-3}{\tau-1}E_{S_1^*}+\frac{\tau-3}{2(\tau-1)}E_{S_1^*,S_3^*}\nonumber\\
	&\qquad\qquad -\frac{1}{\tau-1}E_{S_1^*,V_1}
	-\frac{1}{2}E_{S_3^*,V_1}-\frac{\tau-2}{\tau-1}E_{S_2^*,V_1}\nonumber\\
	&\qquad\qquad +\left(\frac 12 |S_2|(|S_2|-1)-E_{S_2}\right)\frac{\tau-3}{\tau-1}+(|S_2||S_3|-E_{S_2,S_3})\frac{\tau-3}{2(\tau-1)}\nonumber \\
	&\qquad= \frac{3-\tau}{2}(k_{2+}+B(H))+k_1/2.
\end{align}
The integrals over $y_i\in V_H\setminus V_1$ can be split into
\begin{equation}
	\label{eq:ints2s3}
	\begin{aligned}[b]
		& \int_{0}^{\varepsilon_n}\int_{0}^{M}\cdots\int_{0}^{M}\int_{0}^{\infty}\cdots \int_{0}^{\infty} \ \prod_{\mathclap{i\in S_1^*\cup S_2^*}} \ y_i^{-\tau+\zeta_i} \ \ \prod_{\mathclap{\{i,j\}\in \Ecal_{S_1^*,S_2^*}}}\frac{y_iy_j}{y_iy_j+1} \ \ \prod_{\mathclap{\{u,w\}\notin \Ecal_{S_1^*,S_2^*}}}\frac{1}{y_uy_w+1}\dd y_{t_1+t_2}\cdots \dd y_{1}\\
		& \quad \times\int_{0}^{\infty}\cdots \int_{0}^{\infty}\prod_{i\in S_3^*}y_i^{-\tau+\zeta_i} \ \ \prod_{\mathclap{\{i,j\}\in \Ecal_{S_3^*}}} \ \frac{y_iy_j}{y_iy_j+1} \ \ \prod_{\mathclap{\{u,w\}\notin \Ecal_{S_3^*}}} \ \ \frac{1}{y_uy_w+1}\dd y_{t_1+t_2+t_3}\cdots \dd y_{t_1+t_2+1}.
	\end{aligned}
\end{equation}
By Lemma~\ref{lem:S3intind} the set of integrals on the second line of~\eqref{eq:ints2s3} is finite. Lemma~\ref{lem:S1S2intind} shows that the set of integrals on the first line of~\eqref{eq:ints2s3} tends to zero for $\varepsilon_n\to 0$. Thus, 
\begin{align}\label{eq:inteps}
	\int_{0}^{\varepsilon_n}\!&\int_{0}^{M}\!\!\cdots\int_{0}^{M}\!\!\int_{0}^{\infty}\!\!\cdots \!\int_{0}^{\infty} \ \prod_{\mathclap{i\in S_1^*\cup S_2^*}} \ y_i^{-\tau+\zeta_i} \ \prod_{\mathclap{\{i,j\}\in \Ecal_{S_1^*,S_2^*}}}  \ \ \frac{y_iy_j}{y_iy_j+1} \ \prod_{\mathclap{\{u,w\}\notin \Ecal_{S_1^*,S_2^*}}} \ \ \frac{1}{y_uy_w+1}\dd y_{t_1+t_2}\cdots \dd y_{1}\nonumber\\
	& = o(1).
\end{align}
Therefore,~\eqref{eq:expnhsmall},\eqref{eq:StoB} and~\eqref{eq:inteps} yield
\begin{align}\label{eq:NHo}
	&{\sum_{\boldsymbol{v}}\Exp{I(H, \boldsymbol{v})\ind{d_{v_1}<\gamma^l_1(n),d_{v_i}\in [\gamma_i^l(n),\gamma_i^u(n)] \ \forall i>1}}}\nonumber\\
	&\qquad=o\left(n^{\frac{3-\tau}{2}(k_{2+}+B(H))+k_1/2}\right),
\end{align}
when vertex 1 is in $ S_2^*$. Similarly, we can show that the expected contribution from $d_{v_1}<\gamma_1^l(n)$ satisfies the same bound when vertex 1 is in $S_1^*$ or $S_3^*$. The expected number of subgraphs where $d_{v_1}>\gamma_1^u(n)$ if vertex 1 is in $S_1^*$, $S_3^*$ or $V_1$ can be bounded similarly, as well as the expected contribution where multiple vertices have $d_{v_i}\notin [\gamma_i^l(n),\gamma_i^u(n)]$. 

Denote
\begin{equation}
	\Gamma_n(\varepsilon_n) = \{(v_1,\dots,v_k)\colon d_{v_i}\in[\gamma_{v_i}^l(n),\gamma_{v_i}^u(n)] \},
\end{equation}	
and define $\bar{\Gamma}_n(\varepsilon_n)$ as its complement. Denote the number of subgraphs with vertices in $\bar{\Gamma}_n(\varepsilon_n)$ by $N(H,\bar{\Gamma}_n(\varepsilon_n))$. Since $d_{\max}\leq Mn^{1/(\tau-1)}$, $\Gamma_n(\varepsilon_n)={M}_n^{(\boldsymbol{\alpha})}$. Therefore, 
\begin{equation}
	N\Big(H,\bar{M}_n^{(\boldsymbol{\alpha})}\left(\varepsilon_n\right)\Big) = N\Big(H,\bar{\Gamma}_n(\varepsilon_n)\Big),
\end{equation}
where $N\Big(H,\bar{M}_n^{(\boldsymbol{\alpha}))}\left(\varepsilon_n\right)\big)$ denotes the number of copies of $H$ on vertices not in $M_n^{(\boldsymbol{\alpha})}\left(\varepsilon_n\right)$. 
By the Markov inequality and~\eqref{eq:NHo},
\begin{equation}
	N(H,\bar{M}_n^{(\boldsymbol{\alpha})}(\varepsilon))= N\Big(H,\bar{\Gamma}_n(\varepsilon_n)\Big)=o\left(n^{\frac{3-\tau}{2}(k_{2+}+B(H))+k_1/2}\right).
\end{equation}

Combining this with Theorem~\ref{thm:motifs}(ii), for fixed $\varepsilon>0$, 
\begin{align} 
	N(H)&= N(H,M_n^{(\boldsymbol{\alpha})}(\varepsilon))+N(H,\bar{M}_n^{(\boldsymbol{\alpha})}(\varepsilon))=O(n^{\frac{3-\tau}{2}(k_{2+}+B(H))+k_1/2})
\end{align}
shows that
\begin{equation}
	N\Big(H,M_n^{(\boldsymbol{\alpha})}\left(\varepsilon_n\right)\big)/{N(H)}\plim 1,
\end{equation}
as required. This completes the proof of Theorem~\ref{thm:motifs}(i).
\end{proof}

\section{Proof of Theorem~\ref{thm:alg}}\label{sec:algproof}

\begin{algorithm}
	\caption{Finding induced subgraph $H$ of Figure~\ref{fig:Hgraph}.}\label{alg:urgecm}
	\SetKwInOut{Input}{Input}\SetKwInOut{Output}{Output}
	\Input{$G=(V,E)$.}
	\Output{Location of $H$ in $G$ or fail.}
	Define $n=\abs{V}$, $\varepsilon_n=1/\log(n)$, $I_n=[ n^{1/(\tau-1)}\varepsilon_n, n^{1/(\tau-1)}/\varepsilon_n]$ $J_n=[ n^{(\tau-2)/(\tau-1)}\varepsilon_n, n^{(\tau-2)/(\tau-1)}/\varepsilon_n]$ and set $V'=\emptyset$ and $W'=\emptyset$.\\
	\For{$i\in V$}{
		\lIf{ $d_i\in I_n$}{ $V'=V'\cup i$}
	\lIf{ $d_i\in J_n$}{ $W'=W'\cup i$}}
	Divide the vertices in $V'$ randomly into $\lfloor \abs{V'}/2\rfloor$ pairs $S_1,\dots,S_{\lfloor\abs{V'}/k\rfloor}$.\\
	Divide the vertices in $W'$ randomly into $\lfloor \abs{W'}/4\rfloor$ sets of size 4, $T_1,\dots,T_{\lfloor\abs{V'}/k\rfloor}$.\\
	Set $k=0$\\
	\For{$j=1,\dots,\lfloor\abs{V'}/2\rfloor$}{
		\For {$i=1,\dots,\abs{W'}/4\rfloor$}{
			$k=k+1$.\\
		\lIf{ $H$ is an induced subgraph on $S_j\cup T_i$}{\Return location of $H$}
		\lIf{ $k=n$}{\Return \textbf{fail}.}
	}}
\Return \textbf{fail}
\end{algorithm}
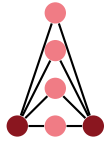
\begin{figure}[tb]
	\definecolor{mycolor1}{RGB}{230,37,52}%
	\tikzstyle{every node}=[circle,fill=black!85,minimum size=8pt,inner sep=0pt]
	\tikzstyle{S1}=[fill=mycolor1!60]
	\tikzstyle{S2}=[fill=mycolor1!60!black]
	\tikzstyle{S3}=[fill=mycolor1]
	\tikzstyle{n1}=[fill=mycolor1!20]
		\centering
		\begin{tikzpicture}
			\tikzstyle{edge} = [draw,thick,-]
			\node[S2] (a) at (0,0) {};
			\node[S2] (c) at (1,0) {};
			\node[S1] (b) at (0.5,0) {};
			\node[S1] (d) at (0.5,1) {};
			\node[S1] (e) at (0.5,1.5) {};
			\node[S1] (f) at (0.5,0.5) {};
			\draw[edge] (a)--(b);
			\draw[edge] (c)--(b);
			\draw[edge] (a)--(d);
			\draw[edge] (c)--(d);
			\draw[edge] (a)--(f);
			\draw[edge] (c)--(f);
			\draw[edge] (c)--(e);
			\draw[edge] (a)--(e);
		\end{tikzpicture}	
	\caption{The subgraph $H$ that is used in Algorithm~\ref{alg:urgecm}. Algorithm~\ref{alg:urgecm} attempts to find a copy of $H$ where the dark vertices are in $V'$, and the light vertices in $W'$.}
	\label{fig:Hgraph}
\end{figure}

\begin{proof}[Proof of Theorem~\ref{thm:alg}]
	 Algorithm~\ref{alg:urgecm} shows the algorithm that distinguishes uniform random graphs from rank-1 inhomogeneous random graphs with connection probabilities~\eqref{eq:ecm} or~\eqref{eq:hvm}. It first selects only vertices of degrees proportional to $n^{1/(\tau-1)}$ and $n^{(\tau-2)/(\tau-1)}$, and then randomly selects such vertices and checks whether they form a copy of induced subgraph $H$ of Figure~\ref{fig:Hgraph}. We will show that with high probability, Algorithm~\ref{alg:urgecm} finds a copy of $H$ when the input graph is generated by a uniform random graph, and that with high probability, Algorithm~\ref{alg:urgecm} outputs `fail' when the input graph is a rank-inhomogeneous random graph with connection probability ~\eqref{eq:ecm} or~\eqref{eq:hvm}.
	
	We first focus on the performance of Algorithm~\ref{alg:urgecm} when the input $G$ is a uniform random graph. Algorithm~\ref{alg:urgecm} detects copies of subgraph $H$ where the vertices have degrees as illustrated in Figure~\ref{fig:subdifs}(c): two vertices of degree proportional to $n^{1/(\tau-1)}$ and four of degree proportional to $n^{(\tau-2)/(\tau-1)}$. 
	By Theorem~\ref{thm:motifs}(ii), there are at least $cn^{(4-\frac{1}{\tau-1})(3-\tau)}$ such induced subgraphs for some $c$ with high probability. Furthermore, denote \begin{equation}
		\boldsymbol{\alpha}=[n^{1/(\tau-1)},n^{1/(\tau-1)},n^{(\tau-2)/(\tau-1)},n^{(\tau-2)/(\tau-1)},n^{(\tau-2)/(\tau-1)},n^{(\tau-2)/(\tau-1)}].
	\end{equation} 
	By Assumption~\ref{ass:degrees},
	\begin{equation}\label{eq:Mnalg}
		|\Mn(\boldsymbol{\alpha})|=\Theta(n^{4(3-\tau)}\log(n)^{6(\tau-1)}),
	\end{equation}
 so that there are at most $c_2n^{4(3-\tau)}\log(n)^{6(\tau-1)}$ sets of vertices with degrees in $\Mn(\boldsymbol{\alpha})$ that form no copy of induced subgraph $H$ for some $c_2<\infty$. Thus, the probability that a randomly chosen set of vertices with degrees in $\Mn(\boldsymbol{\alpha})$ forms $H$ is at least 
	\begin{equation}
		\frac{cn^{(4-\frac{1}{\tau-1})(3-\tau)}}{c_2n^{4(3-\tau)}\log(n)^{6(\tau-1)}}=\tfrac{c}{c_2}n^{(\tau-3)/(\tau-1)}\log(n)^{6(1-\tau)}.
	\end{equation}
Algorithm~\ref{alg:urgecm} tries at most $n$ such sets of vertices with degrees in $\Mn(\boldsymbol{\alpha})$, and therefore attempts to find subgraph $H$ in $\Theta(\min(n,n^{4(3-\tau)}\log(n)^{6(\tau-1)}))=\Theta(f(n))$ attempts where $f(n)=\min(n,n^{4(3-\tau)}\log(n)^{6(\tau-1)})$. Thus, the probability that the algorithm does not find a copy of induced subgraph $H$ among all attempts is bounded by
\begin{equation}
	\Prob{\text{Algorithm does not find $H$}}\leq \left(1-n^{\frac{\tau-3}{\tau-1}}\right)^{f(n)}\leq \me^{-n^{\gamma}},
\end{equation}
for some $\gamma>0$, where we have used that $1-x\leq \me^{-x}$

We now analyze the performance of Algorithm~\ref{alg:urgecm} on rank-1 inhomogeneous random graphs with connection probability~\eqref{eq:ecm}. As these have the same degree distribution asymptotically,~\eqref{eq:Mnalg} also holds there. Furthermore, the probability that vertices in $\Mn(\boldsymbol{\alpha})$ together form a copy of $H$ is 
\begin{equation}
	\prod_{{\{i,j\}\in \Ecal_{H}}}p(i,j)\prod_{{\{i,j\}\notin \Ecal_{H}}}(1-p(i,j))\leq \me^{-n^{\frac{1}{\tau-1}}n^{\frac{1}{\tau-1}}/\log(n)^2}\leq\me^{-n^{\gamma_2}}
\end{equation}
for some $\gamma_2>0$,
where we bounded all $p(i,j)$ and $1-p(i,j)$ by 1, except for $1-p(i,j)$ for the non-edge between the two vertices of degree at least $n^{\frac{1}{\tau-1}}/\log(n)$ (vertices in the left and right bottom corner of Figure~\ref{fig:Hgraph}). Thus, there are at most $\Theta(n^{4(3-\tau)}\me^{-n^{\frac{3-\tau}{\tau-1}}}\log(n)^{6(\tau-1)})$ copies of induced subgraph $H$ on sets of vertices in $\Mn(\boldsymbol{\alpha})$. Therefore, the probability that  a randomly chosen set of vertices with degrees in $\Mn(\boldsymbol{\alpha})$ forms $H$ is at most
\begin{equation}
	c_3\frac{n^{4(3-\tau)}\me^{-n^{\frac{3-\tau}{\tau-1}}}\log(n)^{6(\tau-1)}}{n^{4(3-\tau)}\log(n)^{6(\tau-1)}}=c_3\me^{-n^{\frac{3-\tau}{\tau-1}}}
\end{equation}
Then, the probability that the algorithm does not find a copy of induced subgraph $H$ among all $n$ attempts is bounded by
\begin{align}
	\Prob{\text{Algorithm does not find $H$}}&\geq \left(1-c_3\me^{-n^{\frac{3-\tau}{\tau-1}}}\right)^{f(n)}\nonumber\\
	& =1-c_3f(n)\me^{-n^{\frac{3-\tau}{\tau-1}}}+\bigO{f(n)^2\me^{-2n^{\frac{3-\tau}{\tau-1}}}}.
\end{align}
Thus, with high probability the algorithm outputs `fail' when the input graph is a rank-1 inhomogeneous random graph with connection probability~\eqref{eq:ecm}. A similar calculation shows that the algorithm outputs `fail' with high probability when the input graph $G$ is a rank-1 inhomogeneous random graph with connection probabilities~\eqref{eq:hvm}.
\end{proof}
\bibliographystyle{abbrv}
\DeclareRobustCommand{\VAN}[3]{#3}

\bibliography{../references}

\appendix
\section{Proof of Lemma~\ref{lem:maxmotif}}\label{sec:prooflembeta}
\begin{proof}[Proof of Lemma~\ref{lem:maxmotif}]
	By defining $\beta_i=\alpha_i-\tfrac{1}{2}$ and
	\begin{equation}
		a_{ij}(\beta_i,\beta_j)=\begin{cases}
			1 & \beta_i+\beta_j<0,\{i,j\}\in\mathcal{E}_H,\\
			-1 & \beta_i+\beta_j>0,\{i,j\}\notin\mathcal{E}_H,\\
			0 & \text{else},
		\end{cases}
	\end{equation}
	we can rewrite~\eqref{eq:maxeqalphaind} as
	\begin{equation}\label{eq:maxeqbeta}
		\max_{\boldsymbol{\beta}} \frac{1-\tau}{2}k+ \sum_{i}\beta_i(1-\tau+\sum_{j\neq i}a_{ij}(\beta_i,\beta_j)),
	\end{equation} 
	over all possible values of $\beta_i\in[-\tfrac12,\tfrac{3-\tau}{2(\tau-1)}]$. We ignore the constant factor of $(1-\tau)\tfrac{k}{2} $ in~\eqref{eq:maxeqbeta}, since it does not influence the optimal $\beta$ values. Then, we have to prove that $\beta_i\in\{-\tfrac 12, \tfrac{\tau-3}{2(\tau-1)},0,\tfrac{3-\tau}{2(\tau-1)}\}$ for all $i$ in the optimal solution.
	Note that~\eqref{eq:maxeqbeta} is a piecewise linear function in $\beta_1,\dots,\beta_k$. Therefore, if~\eqref{eq:maxeqbeta} has a unique maximum, then it must be attained at the boundary for $\beta_i$ or at a border of one of the linear sections. Thus, any unique optimal value of $\beta_i$ satisfies $\beta_i=-\tfrac{1}{2}$, $\beta_i=\tfrac{\tau-3}{2(\tau-1)}$ or $\beta_i+\beta_j=0$ for some $j$.
	
	%
	
	The proof of the lemma then consists of three steps: \\
	\textit{Step 1.} Show that $\beta_i=-\tfrac{1}{2}$ if and only if vertex $i$ has degree 1 in $H$ in any optimal solution.\\
	\textit{Step 2.} Show that any unique solution does not contain $i$ with $\abs{\beta_i}\in(0,\tfrac{3-\tau}{2(\tau-1)})$.\\
	\textit{Step 3.} Show that any optimal solution that is not unique can be transformed into two different optimal solutions with $\beta_i\in\{-\tfrac 12, \tfrac{\tau-3}{2(\tau-1)},0,\tfrac{3-\tau}{2(\tau-1)}\}$ for all $i$.\\
	
	\textit{Step 1.}
	Let $i$ be a vertex of degree 1 in $H$, and $j$ be the neighbor of $i$. 
	The contribution from vertex $i$ to~\eqref{eq:maxeqbeta} is
	\begin{equation}
		\beta_i(1-\tau+\ind{\beta_i<-\beta_j}-\sum_{s\neq i,j}\ind{\beta_i>-\beta_s}).
	\end{equation} 
	This contribution is maximized when choosing $\beta_i=-\tfrac{1}{2}$ as $\tau\in(2,3)$. 
	Thus, $\beta_i=- \tfrac{1}{2}$ in the optimal solution if the degree of vertex $i$ is one.
	
	Let $i$ be a vertex in $V_H$, and recall that $d_i^{\sss{(H)}}$ denotes the degree of $i$ in $H$. Let $i$ be such that $d_i^{\sss{(H)}}\geq 2$ in $H$, and suppose that $\beta_i<\tfrac{\tau-3}{2(\tau-1)}$. Because the maximal value of $\beta_j$ for $j\neq i$ is $\tfrac{3-\tau}{2(\tau-1)}$. This implies that $\beta_i+\beta_j<0$ for all $j$. Thus, the contribution to the $i$th term of~\eqref{eq:maxeqbeta} is
	\begin{equation}
		-\tfrac{1}{2}(1-\tau+d_i^{\sss{(H)}})<0,
	\end{equation}
	for any $\beta_j$, $j\neq i$.
	Increasing $\beta_i$ to $\tfrac{\tau-3}{2(\tau-1)}$ then gives a higher contribution. 
	\cs{Thus, $\beta_i\geq \tfrac{\tau-3}{2(\tau-1)}$ when $d_i^{\sss{(H)}}\geq 2$.}

	\textit{Step 2.}
	Now we show that when the solution to~\eqref{eq:maxeqbeta} is unique, it is never optimal to have $\abs{\beta}\in(0,\tfrac{3-\tau}{2(\tau-1)})$. 
	Let 
	\begin{equation}\label{eq:tildebeta}
		\tilde{\beta}=\min_{i:\abs{\beta_i}>0}\abs{\beta_i}.
	\end{equation}
	Let  $N_{\tilde{\beta}^-}$ denote the number of vertices with their $\tilde{\beta}$ value equal to $-\tilde{\beta}$, and $N_{\tilde{\beta}^+}$ the number of vertices with value $\tilde{\beta}$, where $N_{\tilde{\beta}^+}+N_{\tilde{\beta}^-}\geq 1$. Furthermore, let $E_{\tilde{\beta}^-}^+$ denote the number of edges from vertices with value $-\tilde{\beta}$ to other vertices $j$ such that $\beta_j<\tilde{\beta}$, and $E_{\tilde{\beta}^+}^+$ the number of edges from vertices with value $\tilde{\beta}$ to other vertices $j$ such that $\beta_j<-\tilde{\beta}$. Similarly, let $E_{\tilde{\beta}^-}^-$ denote the number of non-edges from vertices with value $-\tilde{\beta}$ to other vertices $j$ such that $\beta_j>\tilde{\beta}$, and $E_{\tilde{\beta}^+}^-$ the number of non-edges from vertices with value $\tilde{\beta}$ to other vertices $j$ such that $\beta_j<-\tilde{\beta}$.  Then, the contribution from these vertices to~\eqref{eq:maxeqbeta} is
	\begin{equation}\label{eq:Nbeta}
		\tilde{\beta}\big((1-\tau)(N_{\tilde{\beta}^+}-N_{\tilde{\beta}^-})+E_{\tilde{\beta}^+}^+-E_{\tilde{\beta}^-}^+-E_{\tilde{\beta}^+}^-+E_{\tilde{\beta}^-}^-\big).
	\end{equation}
	Because we assume ${\beta}$ to be optimal, and the optimum to be unique, the value inside the brackets cannot equal zero. The contribution is linear in $\tilde{\beta}$ and it is the optimal contribution, and therefore $\tilde{\beta}\in\{0,\tfrac{3-\tau}{2(\tau-1)}\}$.
	This shows that $\beta_i\in\{\tfrac{\tau-3}{2(\tau-1)},0,\tfrac{3-\tau}{2(\tau-1)}\}$ for all $i$ such that $d_i^{\sss{(H)}}\geq 2$.
	
	\textit{Step 3.}
	\cs{
		Suppose that the solution to~\eqref{eq:maxeqbeta} is not unique. Suppose that $\beta_*$ appears in one of the optimizers of~\eqref{eq:maxeqbeta}. In the same notation as in~\eqref{eq:Nbeta}, the contribution from vertices with $\beta$-values $\beta_*$ and $-\beta_*$ equals
		\begin{equation}
			{\beta}_*\Big[(1-\tau)\big(N_{{\beta}_*^+}-N_{{\beta}_*^-}\big)+E_{{\beta}_*^+}^+-E_{{\beta}_*^-}^+-E_{{\beta}_*^+}^-+E_{{\beta}_*^-}^-\Big]. 
		\end{equation}
		Since this contribution is linear in $\beta_*$, the contribution of these vertices can only be non-unique if the term within the square brackets equals zero. 
		Thus, for the solution to~\eqref{eq:maxeqbeta} to be non-unique, there must exist $\hat{\beta}_1,\ldots,\hat{\beta}_s>0$ for some $s\geq 1$ such that  
		\begin{equation}
			\hat{\beta}_j\Big((1-\tau)\big(N_{\hat{\beta}_j^+}-N_{\hat{\beta}_j^-}\big)+E_{{\beta}_*^+}^+-E_{{\beta}_*^-}^+-E_{{\beta}_*^+}^-+E_{{\beta}_*^-}^-\Big)=0 \quad \forall j\in[s].
		\end{equation}
		Setting all $\hat{\beta}_j=0$ and setting all $\hat{\beta}_j=\tfrac{3-\tau}{2(\tau-1)}$ are both optimal solutions. Thus, if the solution to~\eqref{eq:maxeqbeta} is not unique, at least 2 solutions exist with $\beta_i\in\{\tfrac{\tau-3}{2(\tau-1)},0,\tfrac{3-\tau}{2(\tau-1)}\}$ for all $i\in V_H$. }
\end{proof}

\section{Proof of Lemmas~\ref{lem:S3intind} and~\ref{lem:S1S2intind}}\label{app:lemmas}

We first provide a lemma that states several properties of the variable $\zeta_i$ of~\eqref{eq:zeta}, that will appear often in the integrals we have to bound:
\begin{lemma}[Bounds on $\zeta_i$]
	\label{lem:dmotif}
	Let $H$ be a connected subgraph, such that the optimum of~\eqref{eq:maxeqsub} is unique, and let ${{\mathcal{P}}=(S_1^*,S_2^*,S_3^*)}$ be the optimal partition. Then
	\begin{enumerate}[label={\upshape(\roman*)}]
		\item $\zeta_i+d^{\sss{(H)}}_{i,S_2^*}-|S_2^*|\leq 1$ for $i\in S_1^*$;
		\item $d^{\sss{(H)}}_{i,S_3^*}+\zeta_i\geq 1$ for $i\in S_2^*$;
		\item $\zeta_i+d^{\sss{(H)}}_{i,S_3^*}-|S_3^*|\leq 0$ and $d^{\sss{(H)}}_{i,S_3^*}+\zeta_i\geq 2$ for $i\in S_3^*$.
	\end{enumerate}
\end{lemma}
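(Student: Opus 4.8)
The plan is to read all five bounds off the first–order optimality of the unique maximiser of the equivalent problem~\eqref{eq:maxeqbeta}. Recall that there $\beta_i=\alpha_i-\tfrac12$, so by Lemma~\ref{lem:maxmotif} the unique optimiser has $\beta_i=\tfrac{\tau-3}{2(\tau-1)}$ on $S_1^*$, $\beta_i=\tfrac{3-\tau}{2(\tau-1)}$ on $S_2^*$, $\beta_i=0$ on $S_3^*$ and $\beta_i=-\tfrac12$ on $V_1$, and each $\beta_i$ ranges over $[-\tfrac12,\tfrac{3-\tau}{2(\tau-1)}]$. Since $\tau\in(2,3)$, the $S_1^*$- and $S_3^*$-values are \emph{strictly interior} to this interval while the $S_2^*$-value is its right endpoint. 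Hence optimality of $\boldsymbol\beta$ forces: for $i\in S_1^*\cup S_3^*$ both one-sided derivatives of the objective $F$ of~\eqref{eq:maxeqbeta} are constrained, $D^+F(\beta_i)\le 0$ and $D^-F(\beta_i)\ge 0$; for $i\in S_2^*$ only $D^-F(\beta_i)\ge 0$. (No strictness is needed; local optimality of this particular partition, which the uniqueness hypothesis supplies through Lemma~\ref{lem:maxmotif}, already suffices.)

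Next I would compute these derivatives. A one-line rearrangement writes the objective as $F(\boldsymbol\beta)=\tfrac{1-\tau}{2}k+(1-\tau)\sum_i\beta_i+\sum_{\{i,j\}\in\Ecal_H}\min(0,\beta_i+\beta_j)-\sum_{\{i,j\}\notin\Ecal_H}\max(0,\beta_i+\beta_j)$, so that
\[
D^{+}F(\beta_i)=(1-\tau)+\sum_{j:\{i,j\}\in\Ecal_H}\ind{\beta_i+\beta_j<0}-\sum_{j:\{i,j\}\notin\Ecal_H}\ind{\beta_i+\beta_j\ge 0},
\]
and $D^-F(\beta_i)$ is the same expression with ``$<$'' replaced by ``$\le$'' and ``$\ge$'' by ``$>$''. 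Now substitute the known $\beta$-values, noting that $\beta_i+\beta_j=0$ occurs exactly for the $S_1^*$--$S_2^*$ pairs and for pairs inside $S_3^*$, every other pair having a strict sign fixed by the two classes. Counting the contributing neighbours $j$ of $i$ and matching against the definitions in~\eqref{eq:zeta} gives $D^+F(\beta_i)=(1-\tau)+\zeta_i+d^{\sss{(H)}}_{i,S_2^*}-|S_2^*|$ for $i\in S_1^*$; $D^-F(\beta_i)=(1-\tau)+\zeta_i+d^{\sss{(H)}}_{i,S_3^*}$ for $i\in S_2^*$; and $D^+F(\beta_i)=(2-\tau)+\zeta_i+d^{\sss{(H)}}_{i,S_3^*}-|S_3^*|$, $D^-F(\beta_i)=(1-\tau)+\zeta_i+d^{\sss{(H)}}_{i,S_3^*}$ for $i\in S_3^*$. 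Imposing the optimality inequalities yields $\zeta_i+d^{\sss{(H)}}_{i,S_2^*}-|S_2^*|\le\tau-1$, $\zeta_i+d^{\sss{(H)}}_{i,S_3^*}\ge\tau-1$, $\zeta_i+d^{\sss{(H)}}_{i,S_3^*}-|S_3^*|\le\tau-2$ and $\zeta_i+d^{\sss{(H)}}_{i,S_3^*}\ge\tau-1$. Since each left-hand side is an integer and $\tau\in(2,3)$ (so $\tau-1<2$, $\tau-2<1$, $\tau-1>1$), these sharpen to $\le1$, $\le0$ and $\ge2$, which are precisely (i), the two inequalities of (iii), and — with room to spare — (ii).

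An equivalent route avoids one-sided derivatives entirely: compare the objective of~\eqref{eq:maxeqalphaind} (equivalently of~\eqref{eq:maxeqsub}) at the optimal partition with its value after relocating the single vertex $i$ to an adjacent class ($S_1^*\!\to\!S_3^*$, $S_2^*\!\to\!S_3^*$, $S_3^*\!\to\!S_1^*$, $S_3^*\!\to\!S_2^*$); uniqueness makes each comparison strict, and after cancelling the common edge and non-edge contributions the difference is a positive multiple of exactly the same integer combination of $\zeta_i$, $d^{\sss{(H)}}_{i,S_\ell^*}$ and $|S_\ell^*|$. I would write out whichever of the two versions is shorter to present cleanly.

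The routine but genuinely error-prone part is the bookkeeping in the middle step: for each ordered pair of classes one must correctly record $\mathrm{sign}(\beta_i+\beta_j)$ and, at the two boundary cases where this sign is $0$, decide whether the indicator sitting in $D^+F$ or the one in $D^-F$ picks the pair up; a single slip changes which of the $d^{\sss{(H)}}_{i,S_\ell^*}$ survives and breaks the identification with $\zeta_i$. The only conceptual point requiring care is the interior-versus-endpoint dichotomy for $\beta_i$: a vertex of $S_2^*$ sits at the right end of the feasible interval, so it yields only the lower bound $D^-F\ge 0$, which is exactly why part (ii) is a one-sided statement while (i) and (iii) each combine an upper and a lower bound.
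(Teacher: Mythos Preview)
Your proposal is correct. The paper takes exactly your ``equivalent route'': it compares the value of~\eqref{eq:maxeqsub} at the optimal partition with its value after relocating the single vertex $i$ to a neighbouring class (e.g.\ $S_1^*\!\to\!S_3^*$ for part~(i), $S_3^*\!\to\!S_1^*$ and $S_3^*\!\to\!S_2^*$ for the two halves of~(iii)), invokes uniqueness for a strict inequality, and then uses $\tau\in(2,3)$ together with integrality to sharpen. Your primary presentation via one-sided derivatives of the continuous objective~\eqref{eq:maxeqbeta} is a clean repackaging of the same mechanism---the derivative condition at an interior point is exactly what the discrete partition comparison records---and has the small advantage of making the endpoint/interior dichotomy for $S_2^*$ versus $S_1^*,S_3^*$ (and hence the one-sidedness of~(ii)) completely transparent.
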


\begin{proof} Suppose first that $i\in S_1^*$. Now consider the partition $\hat{S}_1=S_1^*\setminus \{i\}$, $\hat{S}_2=S_2^*$, $S_3=S_3^*\cup \{i\}$. Then, $E_{\hat{S}_1}=E_{S_1^*}-d^{\sss{(H)}}_{i,S_1^*}$, $E_{\hat{S}_1,\hat{S}_3}=E_{S_1^*,S_3^*}+d^{\sss{(H)}}_{i,S_1^*}-d^{\sss{(H)}}_{i,S_3^*}$ and $E_{\hat{S}_2,\hat{S}_3}=E_{S_2^*,S_3^*}+d^{\sss{(H)}}_{i,S_2^*}$. Furthermore, $E_{\hat{S}_1,V_1}=E_{S^*_1,V_1}-d^{\sss{(H)}}_{i,V_1}$ and $E_{\hat{S}_2,V_1}=E_{S_2^*,V_1}$.  Because the partition into $S_1^*,S_2^*$ and $S_3^*$ achieves the unique optimum of~\eqref{eq:maxeqsub},
	\begin{equation}
		\begin{aligned}[b]
			&|S_1^*|+|S_2^*|\frac{(2-\tau-k+|S_1^*|+k_1)}{\tau-1}-\frac{2E_{S_1^*}-2E_{S_2^*}+E_{S_1^*,S_3^*}-E_{S_2^*,S_3^*}+E_{S_1^*,V_1}-E_{S_2^*,V_1}}{\tau-1}\\
			&>|S_1^*|-1+|S_2^*|\frac{(1-\tau-k+|S_1^*|+k_1)}{\tau-1}\\
			& \quad -\frac{2E_{S_1^*}-2E_{S_2^*}+E_{S_1^*,S_3^*}-E_{S_2^*,S_3^*}-d^{\sss{(H)}}_{i,S_2^*}-d^{\sss{(H)}}_{i,S_1^*}-d^{\sss{(H)}}_{i,S_3^*}+E_{S_1^*,V_1}-E_{S_2^*,V_1}-d^{\sss{(H)}}_{i,V_1}}{\tau-1},
		\end{aligned}
	\end{equation}
	which reduces to
	\begin{equation}
		d^{\sss{(H)}}_{i,S_1^*}+d^{\sss{(H)}}_{i,S_3^*}+d^{\sss{(H)}}_{i,V_1}-d^{\sss{(H)}}_{i,S_2^*}-|S_2^*|=\zeta_i-d^{\sss{(H)}}_{i,S_2^*}-|S_2^*|<\tau-1.
	\end{equation}
	Using that $\tau\in(2,3)$ then yields $d^{\sss{(H)}}_{i,S_1^*}+d^{\sss{(H)}}_{i,S_3^*}+d^{\sss{(H)}}_{i,V_1}\leq 1$. 
	
	Similar arguments give the other inequalities. For example, for $i\in S_3^*$, considering the partition where $i$ is moved to $S_1^*$ gives the inequality $d^{\sss{(H)}}_{i,S_3^*}+d^{\sss{(H)}}_{i,S_1^*}+d^{\sss{(H)}}_{i,V_1}\geq 2$, and considering the partition where $i$ is moved to $S_2^*$ results in the inequality $d^{\sss{(H)}}_{i,S_1^*}+d^{\sss{(H)}}_{i,V_1}\leq 1$, so that $\zeta_i\leq 1$.
\end{proof}

\begin{proof}[Proof of Lemma \ref{lem:S3intind}] Recall that $S_3^*=[s]$. 
	First of all,
	\begin{align}\label{eq:S3intind2}
		&\int_{0}^{\infty}\cdots \int_{0}^\infty \prod_{i \in [s]}x_i^{-\tau+\zeta_i}\prod_{\mathclap{\{i,j\}\in \Ecal_{S_3^*}}}\frac{x_ix_j}{1+x_ix_j} \ \ \prod_{\mathclap{\{i,j\}\notin \Ecal_{S_3^*}}}\frac{1}{1+x_ix_j}\dd x_s\cdots\dd x_1\nonumber\\
		&\leq \int_{0}^{\infty}\cdots \int_{0}^\infty \prod_{i \in [s]}x_i^{-\tau+\zeta_i}\prod_{\mathclap{\{i,j\}\in \Ecal_{S_3^*}}}\min(x_ix_j,1)\prod_{\mathclap{\{i,j\}\notin \Ecal_{S_3^*}}}\min(1/(x_ix_j),1)\dd x_s\cdots\dd x_1.
	\end{align} 
	
	We compute the contribution to~\eqref{eq:S3intind2} where the integrand runs from 1 to $\infty$ for vertices in some nonempty set $U$, and from 0 to 1 for vertices in $\bar{U}=S_3^*\setminus{U}$. W.l.o.g., assume that $U={[t]}$ for some $1\leq t< s$ and that $x_1<x_2<\cdots <x_t$. Define, for $i\in \bar{U}$,
	\begin{align}
		&\tilde{h}(i,\boldsymbol{x})= \int_{0}^{1}x_i^{-\tau+\zeta_i+d^{\sss{(H)}}_{i,\bar{U}}}\prod_{\mathclap{j\in [t]\colon \{i,j\}\in \Ecal_{S_3^*}}} \min(x_ix_j,1)\prod_{j\in [t]\colon \{i,j\}\notin \Ecal_{S_3^*}}\min(1/(x_ix_j),1)\dd x_i .
	\end{align}
	Then~\eqref{eq:S3intind} can be bounded by
	\begin{align*}\label{eq:intSind}
		&\int_{1}^{\infty}\cdots \int_{1}^{\infty}\prod_{p \in [t]}x_p^{-\tau+\zeta_p}\ \prod_{\mathclap{u,v\in U\colon \{u,v\}\notin \Ecal_{S_3^*}}} \ \ \ \  \ \frac{1}{x_ux_v}
		\prod_{i=t+1}^k\tilde{h}(i,\boldsymbol{x})\dd x_t\cdots \dd x_1\\
		& =\int_{1}^{\infty}\cdots \int_{1}^{\infty}\prod_{p \in [t]}x_p^{-\tau+\zeta_p-(|U|-1-d^{\sss{(H)}}_{p,U})}\
		\prod_{i=t+1}^k\tilde{h}(i,\boldsymbol{x})\dd x_t\cdots \dd x_1\\
		& =\int_{1}^{\infty}\cdots \int_{1}^{\infty}\prod_{p \in [t]}x_p^{-\tau+\zeta_p-t+1+d^{\sss{(H)}}_{p,[t]}}\
		\prod_{i=t+1}^k\tilde{h}(i,\boldsymbol{x})\dd x_t\cdots \dd x_1.
	\end{align*}
	We can write $\tilde{h}(i,\boldsymbol{x})$ as
	\begin{align}
		\tilde{h}(i,\boldsymbol{x})& = \int_{0}^{1/x_t}x_i^{-\tau+\zeta_i+d^{\sss{(H)}}_{i,S_3^*}}\dd x_i\cdot \prod_{j=1}^t x_j^{\ind{\{i,j\}\in \mathcal{E}_{S_3^*}}}\nonumber\\
		& + \int_{1/x_t}^{1/x_{t-1}}x_i^{-\tau-1+\zeta_i+d^{\sss{(H)}}_{i,S_3^*}}\dd x_i\cdot  \prod_{j=1}^{t-1} x_j^{\ind{\{i,j\}\in \mathcal{E}_{S_3^*}} }\prod_{k=t}^t x_k^{-\ind{\{i,k\}\notin \mathcal{E}_{S_3^*}}}	+  \nonumber\\
		& + \int_{1/x_{t-1}}^{1/x_{t-2}}x_i^{-\tau+\zeta_i+d^{\sss{(H)}}_{i,S_3^*}-2}\dd x_i \prod_{j=1}^{t-2} x_j^{\ind{\{i,j\}\in \mathcal{E}_{S_3^*}}} \prod_{k=t-1}^t x_k^{-\ind{\{i,k\}\notin \mathcal{E}_{S_3^*}}}+\cdots \nonumber\\
		& + \int_{1/x_{1}}^1x_i^{-\tau+\zeta_i+d^{\sss{(H)}}_{i,S_3^*}-t}\dd x_i\cdot \prod_{k=1}^t x_k^{-\ind{\{i,k\}\notin \mathcal{E}_{S_3^*}}}.
	\end{align}
	By Lemma~\ref{lem:dmotif}, $\zeta_i+d^{\sss{(H)}}_{i,S_3^*}\geq 2$ for $i\in S_3^*$ so that the first integral is finite. 
	Computing these integrals yields
	\begin{align}
		h(i,\boldsymbol{x}) & = C_0\prod_{k=1}^t x_k^{-\ind{\{i,k\}\notin \mathcal{E}_{S_3^*}}}\nonumber\\
		& \quad  + C_1 x_{1}^{\tau-\zeta_i-d^{\sss{(H)}}_{i,S_3^*}+t-2}\prod_{j=1}^1 x_j^{\ind{\{i,j\}\in \mathcal{E}_{S_3^*}}}\prod_{k=2}^t x_k^{-\ind{\{i,k\}\notin \mathcal{E}_{S_3^*}}}\nonumber \\
		& \quad  + C_2 x_{2}^{\tau-\zeta_i-d^{\sss{(H)}}_{i,S_3^*}+t-3}\prod_{j=1}^2 x_j^{\ind{\{i,j\}\in \mathcal{E}_{S_3^*}}}\prod_{k=3}^t x_k^{-\ind{\{i,k\}\notin \mathcal{E}_{S_3^*}}}+\cdots  \nonumber\\
		& \quad +C_{t} x_{t}^{\tau-\zeta_i-d^{\sss{(H)}}_{i,S_3^*}-1}\prod_{j=1}^{t} x_j^{\ind{\{i,j\}\in \mathcal{E}_{S_3^*}}}\nonumber\\
		& {= :  C_0h_0(i,\boldsymbol{x}) +C_1 h_1(i,\boldsymbol{x}) +\dots+C_{t}h_{t}(i,\boldsymbol{x})},
	\end{align}	
	for some constants $C_0,\dots,C_{t}$. Assume that $i$ is connected to $l$ vertices in $U$, so that there are $l$ vertices in $\{1,2.\dots,t\}$ such that $\ind{\{i,t\}\in\mathcal{E}_{S_3^*}}=1$ and $t-l$ such that $\ind{\{i,t\}\notin\mathcal{E}_{S_3^*}}=1$. Then, 
	\begin{align*}
		\frac{h_{p+1}(i,\boldsymbol{x})}{h_{p}(i,\boldsymbol{x})}&=\frac{\prod_{j=1}^{p+1}x_j^{\ind{\{i,j\}\in \mathcal{E}_{S_3^*}}}x_{p+1}^{\tau-\zeta_i-d^{\sss{(H)}}_{i,S_3^*}+t-1-(p+1)}\prod_{k=p+2}^{t} x_k^{-\ind{\{i,k\}\notin \mathcal{E}_{S_3^*}}}}{\prod_{j=1}^{p}x_j^{\ind{\{i,j\}\in \mathcal{E}_{S_3^*}}}x_{p}^{\tau-\zeta_i-d^{\sss{(H)}}_{i,S_3^*}+t-1-p}\prod_{k=p+1}^{t} x_k^{-\ind{\{i,k\}\notin \mathcal{E}_{S_3^*}}}}\\
		&=\frac{x_{p+1}^{\ind{\{i,p+1\}\in \mathcal{E}_{S_3^*}}}x_{p+1}^{\tau-\zeta_i-d^{\sss{(H)}}_{i,S_3^*}+t-1-(p+1)}}{x_{p}^{\tau-\zeta_i-d^{\sss{(H)}}_{i,S_3^*}+t-1-p}x_{p+1}^{-\ind{\{i,p+1\}\notin \mathcal{E}_{S_3^*}}}}\\
		&=\Big(\frac{x_{p+1}}{x_{p}}\Big)^{\tau-\zeta_i-d^{\sss{(H)}}_{i,S_3^*}+t-p-1},
	\end{align*}
	which is larger than 1 for $p< \tau-\zeta_i-d^{\sss{(H)}}_{i,S_3^*}+t-2$ as $x_{p+1}>x_p$, and at most 1 for $p\geq\tau-\zeta_i-d^{\sss{(H)}}_{i,S_3^*}+t-2$. Thus, $p^*=p^*_i={\rm argmax}_{p} h_p(i,\boldsymbol{x})=\lfloor \tau-\zeta_i-d^{\sss{(H)}}_{i,S_3^*}+t-1\rfloor$.
	Therefore, there exists a $K>0$ such that
	\begin{align}
		\label{eq:hstar}
		h(i,\boldsymbol{x})\leq K h_{p^*_i}(i,\boldsymbol{x}).
	\end{align}
	
	For all $j\in U$, let 
	\begin{align}
		\label{eq:Q}
		Q_j^+ = \{i\in \bar{U}\colon \{i,j\}\in \Ecal_{S_3^*},{p_i^*}\geq j\}
	\end{align}
	denote the set of neighbors $i\in\bar{U}$ of $j\in U$ such that $x_j$ appears in $h_{p^*_i}(i,\boldsymbol{x})$ with exponent $+1$. (note that $i<j$ for all $i\in \bar{U}, j\in U$). Similarly, let
	\begin{align}
		\label{eq:Qmin}
		Q_j^- = \{i\in \bar{U}\colon \{i,j\}\notin \Ecal_{S_3^*},{p_i^*}< j\}
	\end{align}
	be the set of non-neighbors $i\in\bar{U}$ of $j$ such that $x_j$ appears in $h_{p^*_i}(i,\boldsymbol{x})$ with exponent $-1$. Furthermore, let ${W_j=\{i\in\bar{U}\colon {p^*_i}=j\}}$. Thus, the vertices in $W_j$ appear with exponent $\tau-\zeta_i-d^{\sss{(H)}}_{i,S_3^*}+t-1-j$ in $h_{p^*_i}(i,\boldsymbol{x})$. Then, by the definition of $\zeta_i$ in~\eqref{eq:zeta}
	\begin{align}
		\sum_{i\in W_j}\zeta_i+d^{\sss{(H)}}_{i,S_3^*}-t+j = 2E_{W_j}+E_{W_j,V_H\setminus W_j}+(j-t-|S_2^*|)|W_j|.
	\end{align}
	This yields
	\begin{equation}\label{eq:integrals}
		\begin{aligned}[b]
			&\int_{1}^{\infty}\cdots \int_{x_{t-1}}^{\infty}\prod_{j \in [t]}x_j^{-\tau+\zeta_j-t+1+d^{\sss{(H)}}_{j,[t]}} \prod_{i=t+1}^k\hat{h}(i,\boldsymbol{x})\dd x_t\cdots \dd x_1\\
			&\leq \tilde{K}\int_{1}^{\infty}\cdots \int_{x_{t-1}}^{\infty}\prod_{j \in [t]}x_j^{-\tau+\zeta_j-t+1+d^{\sss{(H)}}_{j,[t]}}\prod_{i=t+1}^k\hat{h}_{p^*_i}(i,\boldsymbol{x})\dd x_t\cdots \dd x_1\\
			& \leq \tilde{K}\int_{1}^{\infty} \int_{x_1}^{\infty}\cdots \int_{x_{t-1}}^{\infty}\prod_{j \in [t]}x_j^{-\tau+\zeta_j-t+1+d^{\sss{(H)}}_{j,[t]}+|Q_j^+|-|Q_j^-|+(\tau-1-j+t+|S_2^*|)\abs{W_j}-2E_{W_j}-E_{W_j,\hat{W}_j}}\dd x_{t}\cdots \dd x_1.
		\end{aligned}
	\end{equation}
	for some $\tilde{K}>0$, where $\hat{W}_j=V_H\setminus W_j$. 
	
	We will now use the uniqueness of the solution of the optimization problem in~\eqref{eq:maxeqalphaind} to prove that the integral over $x_t$ in~\eqref{eq:integrals} is finite. 
	First of all, 
	\begin{align}
		Q_t^+=\{i\in \bar{U}:\{i,t\}\in\mathcal{E}_{S_3^*},p_i^*=t\}=\{i\in W_t:\{i,t\}\in\mathcal{E}_{S_3^*}\},
	\end{align}
	so that $|Q_t^+|=d^{\sss{(H)}}_{t,W_t}$,
	whereas
	\begin{align}
		Q_t^-=\emptyset.
	\end{align}
	
	We will now prove that the exponent of $x_t$ in~\eqref{eq:integrals}
	$$-\tau+\zeta_t-t+1+d^{\sss{(H)}}_{t,[t]}+|Q_t^+|-|Q_t^-|+(\tau-1+|S_2^*|)\abs{W_t}-2E_{W_t}-E_{W_t,\hat{W}_t}<-1,$$
	or 
	\begin{equation}\label{eq:exponent}
		(\tau-1+|S_2^*|)\abs{W_t}+1-\tau-|S_2^*|-t-2E_{W_t}-E_{W_t,\hat{W}_t}+d^{\sss{(H)}}_{t}-d^{\sss{(H)}}_{t,\bar{U}\setminus W_t}<-1.
	\end{equation}
	Define $\hat{S}_2=\hat{S}_2^*\cup \{ t \}$, $\hat{S}_1=\hat{S}_1^*\cup {W}_t$ and $\hat{S}_3=S_3^*\setminus(W_t\cup \{t\})$. This gives 
	\begin{align}
		E_{\hat{S}_1}-E_{S_1^*}&=E_{W_t}+E_{W_t,S_1^*} \label{eq:comparesets1},\\
		E_{\hat{S}_1,\hat{S}_3}-E_{S_1^*,S_3^*}
		& =E_{W_t,S_3^*}-E_{W_t}-E_{W_t,S_1^*}-{|Q_t^+|}-d^{\sss{(H)}}_{t,S_1^*},\\
		E_{\hat{S}_2,\hat{S}_3}-E_{S_2^*,S_3^*}
		& =d^{\sss{(H)}}_{t,S_3^*}-d^{\sss{(H)}}_{t,S_2^*}-{|Q_t^+|}-E_{W_t,S_2^*},\\
		E_{\hat{S}_1,V_1}-E_{S_1^*,V_1}&=E_{W_t,V_1},\\
		E_{\hat{S}_2,V_1}-E_{S_2^*,V_1}&=d^{\sss{(H)}}_{t,V_1},\label{eq:comparesets4}	\\
		E_{\hat{S}_2}-E_{S_2^*}&=d^{\sss{(H)}}_{t,S_2^*} 
	\end{align}
	Because~\eqref{eq:maxeqsub} is \emph{uniquely} optimized by $S_1^*$, $S_2^*$ and $S_3^*$, we obtain
	\begin{align}
		&	\abs{S_1^*}+\frac{2-\tau-k+|S_1^*|+k_1}{\tau-1}\abs{S_2^*}-\frac{2E_{S_1^*}-2E_{S_2^*}+E_{S_1^*,S_3^*}-E_{S_2^*,S_3^*}+E_{S_1^*,V_1}-E_{S_2^*,V_1}}{\tau-1}\nonumber\\
		&	> \abs{\hat{S}_1}+\frac{2-\tau-k+|\hat{S}_1|+k_1}{\tau-1}\abs{\hat{S}_2}-\frac{2E_{\hat{S}_1}-2E_{\hat{S}_2}+E_{\hat{S}_1,\hat{S}_3}-E_{\hat{S}_2,\hat{S}_3}+E_{\hat{S}_1,V_1}-E_{\hat{S}_2,V_1}}{\tau-1}
	\end{align}
	Plugging in~\eqref{eq:comparesets1}--\eqref{eq:comparesets4} and using that $k-k_1-|S_1^*|=|S_2^*|+|S_3^*|$ yields
	\begin{align}
		&|W_t|+\frac{2-\tau}{\tau-1}+\frac{|S_2^*||W_t|-|S_2^*|-|S_3^*|+|W_t|}{\tau-1}\nonumber\\
		& -\frac{E_{W_t}+E_{W_t,S_1^*}+E_{W_t,V_1}+E_{W_t,S_2^*}+E_{W_t,S_3^*}-d^{\sss{(H)}}_{t,S_2^*}-d^{\sss{(H)}}_{t,S_1^*}-d^{\sss{(H)}}_{t,S_3^*}-d^{\sss{(H)}}_{t,V_1}}{\tau-1}< 0.
	\end{align}
	Multiplying by $\tau-1$ then gives
	\begin{align}
		&(\tau-1)|W_t|+1-\tau+|S_2^*||W_t|-|S_2^*|-|S_3^*|+|W_t|-{2E_{W_t}-E_{W_t,\hat{W}_t}+d^{\sss{(H)}}_{t}}<-1.
	\end{align}
	Using that $|W_t|\leq |S_3^*|-t$ then yields
	\begin{align}
		&(\tau-1)|W_t|+1-\tau+|S_2^*||W_t|-|S_2^*|-t-{2E_{W_t}-E_{W_t,\hat{W}_t}+d^{\sss{(H)}}_{t}}<-1,
	\end{align}
	so that~\eqref{eq:exponent} also holds.
	
	Thus, the integral in~\eqref{eq:integrals} over $x_t$ results in a power of $x_{t-1}$. 
	We can then use a similar technique to show that the power of $x_{t-1}$ is also smaller than one, and iterate to finally show that the integral in~\eqref{eq:integrals} is finite, so that~\eqref{eq:S3intind} is also finite.
\end{proof}

\begin{proof}[Proof of Lemma~\ref{lem:S1S2intind}]
	This lemma can be proven along similar lines of Lemma~\ref{lem:S3intind}. In particular, it follows the same strategy and computations as in~\cite[Lemma 7.3]{hofstad2017d}, where the factors $x_ix_i/(x_ix_j+1)$ and $L_n/(x_ix_j+1)$ are bounded by terms of $\min(x_ix_j,1)$ and $\min(1/(x_ix_j),1)$ as in the proof of Lemma~\ref{lem:S3intind}. 
\end{proof}

\end{document}